\newtheorem{theorem}{Theorem}[section]
\newtheorem{corollary}[theorem]{Corollary}
\newtheorem{lemma}[theorem]{Lemma}
\newtheorem{proposition}[theorem]{Proposition}
\theoremstyle{definition}
\newtheorem{definition}[theorem]{Definition}
\newtheorem{remark}[theorem]{Remark}
\newtheorem{example}[theorem]{Example}
\numberwithin{equation}{section}
\newcommand{\R}{\mathbb{R}}
\newcommand{\N}{\mathbb{N}}
\newcommand{\eps}{\varepsilon}
\newcommand{\ove}{\overline}
\newcommand{\mres}{\mathbin{\vrule height 1.6ex depth 0pt width
0.13ex\vrule height 0.13ex depth 0pt width 1.3ex}}
\newcommand{\BBB}{\color{black}}
\newcommand{\EEE}{\color{black}}
\numberwithin{equation}{section}
\begin{document}

\title[{Lower semicontinuity  for functionals in $PR$  and $GSBD$}]{Lower semicontinuity  for functionals defined  on piecewise rigid functions and on $GSBD$}

\subjclass[2010]{ 49J45, 49Q20, 70G75,   74R10.} 

 \keywords{Piecewise rigid functions, Lower semicontinuity, BD-ellipticity, $GSBD^p$.}

\author{Manuel Friedrich}
\address[Manuel Friedrich]{Applied Mathematics,  
Universit\"{a}t M\"{u}nster, Einsteinstr. 62, D-48149 M\"{u}nster, Germany}
\email{manuel.friedrich@uni-muenster.de}
\urladdr{https://www.uni-muenster.de/AMM/Friedrich/index.shtml}

\author{Matteo Perugini}
\address[Matteo Perugini]{Applied Mathematics,  
Universit\"{a}t M\"{u}nster, Einsteinstr. 62, D-48149 M\"{u}nster, Germany}
\email{matteo.perugini@uni-muenster.de}

\author{Francesco Solombrino}
\address[Francesco Solombrino]{Dip. Mat. Appl. ``Renato Caccioppoli'', Univ. Napoli ``Federico II'', Via Cintia, Monte S. Angelo
80126 Napoli, Italy}
\email{francesco.solombrino@unina.it}
\urladdr{http://www.docenti.unina.it/francesco.solombrino}

\begin{abstract}
In this work, we provide a characterization result for lower semicontinuity of surface energies defined on  piecewise rigid functions, i.e., functions which are piecewise affine on a Caccioppoli
partition where the derivative in each component is a skew symmetric matrix. This characterization is achieved by means of an integral condition, called  \emph{$BD$-ellipticity}, which is in the spirit of $BV$-ellipticity defined by Ambrosio and Braides \cite{AmbrosioBraides2}. By specific examples we show that this novel concept is in fact stronger compared to its $BV$ analog.  We further provide  a sufficient condition implying $BD$-ellipticity which we call \emph{symmetric joint convexity}. This notion can be checked explicitly for certain classes of surface energies which are relevant for applications, e.g., for variational fracture models.  
Finally, we give a direct proof that surface energies with symmetric jointly convex integrands are lower semicontinuous also on the larger space of $GSBD^p$ functions.  
\end{abstract}
\maketitle

\section{Introduction}\label{sec: introduction}

The minimization of surface energies for configurations which represent partitions of the domain into regions of finite perimeter appears in 
many  problems in  materials science, physics, computer science, and other fields (see, for instance, \cite[Introduction]{FM} and the references therein).  
In the framework of the calculus of variations, these energies are often given in the form of integral functionals defined on \emph{Caccioppoli partitions} or \emph{piecewise constant functions} on such partitions, see \cite[Section 4.4]{Ambrosio-Fusco-Pallara:2000}  for their definition.   After the seminal work by {\sc Almgren} \cite{Almgren}, {\sc Ambrosio and Braides} \cite{AmbrosioBraides, AmbrosioBraides2} developed a thorough analysis concerning integral representation, compactness, $\Gamma$-convergence, and relaxation for this class of functionals. They also formulated a general theory of lower semicontinuity in this setting,  which we will discuss in detail below. This approach was further developed by subsequent contributions over the last years, see, e.g.,   \cite[Section 5.3]{Ambrosio-Fusco-Pallara:2000} or \cite{Caraballo1, Caraballo2, Caraballo3}. Let us also mention some recent advances dealing with density and continuity results \cite{BCG-part, matthias}, witnessing that the study of this class of functionals is of ongoing interest.

 \textbf{Background:}  We now briefly discuss the framework for lower semicontinuity devised in \cite{AmbrosioBraides2} since it will be relevant for the purpose of our paper. There, integral functionals of the form 
\begin{equation}\label{eq: BVmodel}
 u \mapsto  \int_{J_u \cap \Omega} f(u^+, u^-, \nu_u)\,\mathrm{d}\mathcal{H}^{d-1}
\end{equation}
 are   considered. Above,   $u=\sum_{ k \in \N  } b_k \chi_{P_k}$ is a piecewise constant function  where the  sets $P_k$ partition a $d$-dimensional reference configuration $\Omega$ into subsets of finite perimeter. Thus, the jump set $J_u$ of $u$, locally oriented by a normal unit vector $\nu_u$, consists of the interfaces between two different $P_k$'s where $u$ jumps from the value $u^+$ to $u^-$. The  constants  $b_k$ are taken from a prescribed finite subset $T$ of $\mathbb{R}^m$, so that  without restriction one can  assume the integrand $f\colon \mathbb{R}^m \times  \mathbb{R}^m \times \mathbb{S}^{d-1}  \to[0,+\infty)  $ to be continuous and bounded.    (Here, $\mathbb{S}^{d-1}$ denotes the unit sphere in $\R^d$.) In \cite{AmbrosioBraides2},  it is shown  with a localization technique  that lower semicontinuity of energies of the kind \eqref{eq: BVmodel}  can be equivalently reformulated in terms of an integral condition, named $BV$-ellipticity.  The latter   plays a similar role as Morrey quasi-convexity \cite{Morrey} for integral functionals on Sobolev spaces. It  requires  that, for all $(i,j, \nu) \in T\times T \times \mathbb{S}^{d-1}$  with $i \neq j$,  we have
\[
\int_{J_v \cap  Q^\nu_1  } f(v^+, v^-, \nu_v)\,\mathrm{d}\mathcal{H}^{d-1}\ge f(i, j, \nu)
\]
for all piecewise constant functions $v$ (with values in $T$) which take the values $i$ and $j$, respectively, on the upper and the lower part of the boundary of  $Q^\nu_1$ which is  the unit cube  in $\R^d$  oriented by $\nu$.    This condition, however, is not easy to handle because it is given by an integral
inequality. To overcome this difficulty, a  sufficient condition  for semicontinuity  has been  introduced which can be easily verified in many practical cases: it is  called {\it regular biconvexity} or  {\it joint convexity} (this latter expression is used in the reference book \cite{Ambrosio-Fusco-Pallara:2000}) and amounts to require that
\begin{align}\label{eq: joinijoini}
f(i,j, \nu)=\sup_{h \in \mathbb{N}}\ \langle V_h(i)-V_h(j), \nu \rangle,
\end{align}
 where  $(V_h)_h \subset  C_0(\mathbb{R}^m, \mathbb{R}^d)$ is a countable collection of continuous functions vanishing at infinity.  

Understanding the properties of functionals on Caccioppoli partitions has also proved to be a fundamental step in the analysis of free-discontinuity problems \cite{Ambrosio-Fusco-Pallara:2000, DeGiorgi-Ambrosio:1988} defined on  \emph{(generalized) special functions of bounded variation} ($(G)SBV$) (see \cite[Section 4]{Ambrosio-Fusco-Pallara:2000}). The study of lower semicontinuity conditions for surface energies of the form \eqref{eq: BVmodel}, but considered in the larger space $GSBV$, is indeed one of the relevant issues that can be reduced to corresponding problems on partitions, see \cite{Amb, Ambrosio:90-2}. Since piecewise constant functions are a subset of $GSBV$, it is clear that $BV$-ellipticity still provides a necessary condition for lower semicontinuity in  a  suitable weak topology, essentially the one where {\sc Ambrosio's} compactness theorem \cite[Theorems 4.7-4.8]{Ambrosio-Fusco-Pallara:2000} holds. Remarkably,  using an approximation argument of $SBV$ functions with piecewise constant ones  (which essentially relies on the $BV$ coarea formula),  \cite[Theorem 3.3]{Amb}   shows that $BV$-ellipticity actually provides also a sufficient condition for lower semicontinuity along sequences which are uniformly bounded both in $L^\infty$   and in the weak topology of $SBV$.    If $L^\infty$ bounds are not available  and one still wants to allow for possibly unbounded integrands, lower semicontinuity results in $GSBV$ can be provided only under additional structural assumptions,  see \cite[Theorem 3.7]{Amb},  since the traces $u^+$ and $u^-$ are  not necessarily integrable on $J_u$  in this case. In the proof, the possibility of using Lipschitz truncations to approximate $GSBV$ with bounded $SBV$ functions plays a relevant role,  a tool which is not available in our setting described below.

 \textbf{Setting of the paper:}  In the present paper, we are interested in analogous problems for functionals defined on \emph{piecewise rigid functions},  denoted by $PR(\Omega)$,  i.e., functions which are piecewise affine on a Caccioppoli partition  where the  derivative in each component is constant and  lies  in the set of skew symmetric matrices $ \R^{d\times d}_{\rm skew}$. Functions in this space are vector-valued and take the form
$$
{u(x) = \sum\nolimits_{k\in \N}  (Q_k\,  x + b_k) \chi_{P_k}(x),}
$$     
 where $(P_k)_{k\in\N}$ is a Caccioppoli partition of $\Omega$,  $ Q_k  \in  \R^{d\times d}_{\rm skew}$, and $b_k \in \R^d$  for all $k \in \N$.   Due to a remarkable \emph{piecewise rigidity} result   in \cite{Chambolle-Giacomini-Ponsiglione:2007}, the set of these  functions coincides 
with the (seemingly larger) set of functions $u \in GSBD$ with approximate symmetrized gradient $e(u)=0$ almost everywhere. Here, $(G)SBD$ is the space of \emph{(generalized) special functions of bounded deformation}, introduced in \cite{ACD, DM}. 
Actually, our primary motivation comes  exactly from the study of  free-discontinuity problems defined on the space $GSBD^p$, see  \cite{DM},  which has obtained steadily  increasing attention over the last years, cf., e.g., \cite{Chambolle-Conti-Francfort:2014, Iu3, Crismale2, Crismale,  Conti-Iurlano:15, Conti-Focardi-Iurlano:15, CoFoIu, Vito, Crismale-Friedrich, Friedrich:15-2, Friedrich:15-3, Friedrich:15-4, FM, FriedrichSolombrino}.  (Here, the exponent $p$ refers to summability of the  approximate  symmetrized gradient.)  In these problems, only a control on the symmetrized gradient of the  admissible configurations  is available. Hence, a larger space than piecewise constant functions must be taken into account in order to provide lower semicontinuity conditions for surface integrands. It is quite natural to expect (and indeed our results in Sections \ref{sec: counterexamples} and \ref{sec: GSBD} will justify this point of view) that the understanding of energies defined on piecewise rigid functions  is a significant ingredient  of such a research program. 

The results  of this paper (see description below)  also complement the ones we obtained in a first paper on this topic \cite{FM}, where integral representation and $\Gamma$-convergence for functionals defined on piecewise rigid functions have been investigated. The proof strategy there  was  based on the global method for relaxation developed in \cite{BFLM, BDM},  but  some highly nontrivial issues had to be faced. In particular, a key ingredient for the results in \cite{FM} (and actually also for the ones in the present paper)  is a construction for  \emph{joining} two functions $u,v \in PR(\Omega)$, which is usually called the \emph{fundamental estimate}. In    the space $PR(\Omega)$,  this cannot be achieved by means of a cut-off construction of the form $w := u \varphi + (1 - \varphi)  v  $ for some smooth $\varphi$ with $0 \le \varphi \le 1$, since in general $w$ is not in $PR(\Omega)$. In the case of piecewise constant functions, this issue was solved in \cite{AmbrosioBraides} by using the coarea formula in $BV$, see \cite[Lemma 4.4]{AmbrosioBraides}, a tool which is not available in spaces of functions with bounded deformation.  However,  the issue can be successfully overcome: a statement of the fundamental estimate  in  $PR(\Omega)$  is given in Lemma  \ref{lemma: fundamental estimate2-new}, while we refer the interested reader to \cite[Introduction, Lemmas 4.1 and 4.4]{FM} for an overview of the proof strategy, and the detailed proof, respectively.

 \textbf{Results of the paper:}  We now come to the description of our results. We essentially follow the program of \cite{AmbrosioBraides2}. On the space $PR(\Omega)$,  we consider functionals of the form 
\begin{equation}\label{eq: BDmodel}
 u \mapsto  \int_{J_u \cap \Omega} f(u^+, u^-, \nu_u)\,\mathrm{d}\mathcal{H}^{d-1}
\end{equation}
 for densities $f\colon \R^d\times \R^d \times \mathbb{S}^{d-1} \to [0,+\infty)$.  We introduce an integral notion which we call \emph{BD-ellipticity}: it requires that for all $(i,j, \nu) \in \mathbb{R}^d\times \mathbb{R}^d \times \mathbb{S}^{d-1}$ with $i \neq j$ we have
\[
\int_{J_v \cap  Q^\nu_1 } f(v^+, v^-, \nu_v)\,\mathrm{d}\mathcal{H}^{d-1}\ge f(i, j, \nu)
\]
for all $v\in  PR(\Omega)$ which take the values $i$ and $j$, respectively, on the upper and the lower part of the boundary of the unit cube oriented by $\nu$,  again denoted by $Q^\nu_1$.  The fundamental estimate in Lemma \ref{lemma: fundamental estimate2-new} and a localization procedure allows us to show that this condition is equivalent to lower semicontinuity of the functionals \eqref{eq: BDmodel} along sequences which converge in measure and whose jump sets have uniformly bounded  $\mathcal{H}^{d-1}$-measure,  provided  that  $f$ is uniformly continuous and bounded (Theorem \ref{thm: LSC iff BD-ell + bounded}). If this latter requirement is dropped, lower semicontinuity is still guaranteed on sequences which are bounded in $L^\infty$,  see Corollary \ref{cor: unbounded}. However, this  uniform bound  is quite a restrictive assumption in the variational modeling of fracture where the interest in functionals of the kind \eqref{eq: BDmodel} originates. Hence, it is relevant to know that we may get rid of $L^\infty$-bounds also in some cases where $f$ is unbounded, as we point out in Corollary \ref{cor: f=sup fh is BD and LSC}. There, we prove that, if $f$ is the supremum of $BD$-elliptic, uniformly continuous, and bounded integrands, it is itself $BD$-elliptic and the associated functional is lower semicontinuous. 

 In case $f$ is not $BD$-elliptic, we also study the relaxation of functionals of the form  \eqref{eq: BDmodel}, see Theorem \ref{thm: don't worry be happy} for details.  After providing an abstract framework for lower semicontinuity, the rest of the paper is devoted to investigate more closely   related notions. On the one hand, our objective is to provide sufficient conditions and  to find relevant explicit examples of functionals fulfilling our assumptions. On the other hand, we  compare the notion of $BD$-ellipticity with its $BV$ analog: as one may expect, we show that  it is   actually more restrictive. 

 To provide a sufficient notion for lower semicontinuity, we introduce  a suitable subclass of jointly convex integrands, which we call {\it symmetric jointly convex} functions: they are still of the form
\begin{equation}\label{eq: sjc}
f(i,j, \nu)=\sup_{h \in \mathbb{N}} \ \langle V_h(i)-V_h(j), \nu \rangle\,,
\end{equation}
 cf.\ \eqref{eq: joinijoini},  but, along  with boundedness and uniform continuity, we additionally require the vector fields $V_h\colon \R^d\to \R^d$  to be \emph{conservative}. The role of this condition, which for smooth fields $V_h$ implies that the differentials $DV_h$ are symmetric matrices, is apparent in the proof of Theorem \ref{thm: LSC symjointconvex} as it implies that the distributional divergence of the composite functions $V_h(u)$, with $u \in PR(\Omega)$, is concentrated on the jump set $J_u$. This allows to reproduce the arguments in \cite{AmbrosioBraides2},   see also \cite[Theorem 5.20]{Ambrosio-Fusco-Pallara:2000}, and to prove that symmetric joint convexity is a sufficient condition  both  for  $BD$-ellipticity and for the lower semicontinuity of the associated  integral functionals \eqref{eq: BDmodel}. 

With symmetric joint convexity at our disposal, we can give explicit examples of functionals complying with our setting, see Section \ref{sec: examples}. For instance, in Subsection \ref{sec: joint2} we show that the functionals frequently  used to describe \emph{cohesive fracture} energies,  namely 
\[
 u \mapsto  \int_{J_u} |u^+-u^-|\,\mathrm{d}\mathcal{H}^{d-1}\,,\quad \quad  u \mapsto  \int_{J_u}   \min \lbrace |u^+-u^-|, M \rbrace  \,\mathrm{d}\mathcal{H}^{d-1},
\]
 are lower semicontinuous. The key observation to this purpose is that for all $(i, j, \nu)$ it holds
\[
|i-j|\,|\nu|=\sup \,  \langle Bi- Bj, \nu \rangle,
\]
where the supremum is taken over all symmetric matrices $B$ having {\it operator norm} at most $1$,  see Lemma \ref{lem: Bu=v}.   Note that  the vector field $x \mapsto Bx$ is conservative by symmetry of the matrix $B$.  Then,  by decomposing the action of $B$ into one-dimensional, orthogonal eigenspaces and truncating the resulting functions of one variable, we can approximate  $x\mapsto Bx$  from below with bounded,  uniformly continuous, and  conservative vector fields  $V_h$. This shows that the integrands are symmetric jointly convex,   which yields the desired   lower semicontinuity.   The results in Subsection \ref{sec: joint2} also apply  to  more general surface integrands of the form $g(|i-j|)|\nu|$, for  increasing subadditive functions $g$. (Actually, an  additional restriction has to be imposed, cf.\ the statement of Theorem \ref{example: g subadditive}.) Notice that these integrands are {\it isotropic}, in  contrast  to similar ones considered in \cite{AmbrosioBraides2} which (as we will discuss later) may instead fail to be $BD$-elliptic.

\BBB 
Examples of lower semicontinuous energies on $BD$ are very rare to date, and results appear to be limited to \cite{DalMOT, GZ1, GZ2, GZ3}. In \cite{GZ1}, 
functionals of the form 
$$ u \mapsto  \int_{J_u}    \varphi\big( |\langle u^+-u^-, \nu_u \rangle| \big) \,\mathrm{d}\mathcal{H}^{d-1} $$
for convex, subadditive, and increasing densities $\varphi \colon [0,\infty) \to [0,\infty)$  are considered (with an additional noninterpenetration constraint), and \cite{GZ2}  deals with surface energies of the form 
$$ u \mapsto  \int_{J_u} \sup_{\xi \in \mathbb{S}^{d-1}}   |\langle \nu_u, \xi \rangle  |  \psi\big( |\langle u^+-u^-, \xi \rangle| \big) \,\mathrm{d}\mathcal{H}^{d-1}, $$
where $\psi \colon [0,\infty) \to [0,\infty)$ is a lower semicontinuous, nondecreasing, and  subadditive function. In \cite{DalMOT}, a class of functionals is investigated which particularly includes 
\begin{align}\label{eq: first}
 u \mapsto  \int_{J_u} |(u^+-u^-)\odot \nu_u|\,\mathrm{d}\mathcal{H}^{d-1}\,,\quad \quad  u \mapsto  \int_{J_u}G_M\big((u^+-u^-)\odot \nu_u\big)\,\mathrm{d}\mathcal{H}^{d-1},
\end{align}
where  $|\cdot|$  is the Frobenius norm, $\odot$ denotes the symmetrized tensor product,  and the second integrand is a suitable truncation of the first one, explicitly calculated in \cite[Section 6]{DalMOT}. In Subsections \ref{sec: joint2} and  \ref{sec: orlando},  we show that all densities above are symmetric jointly convex functions, and  therefore the above classes can  be treated by our approach.  \EEE

After further examples in Subsections \ref{sec: biconvex}--\ref{sec:O},  in Subsection \ref{sec: counterexamples} we instead address the comparison between $BD$- and $BV$-ellipticity. In particular, we show with a counterexample  (Example \ref{counterexample})  that  anisotropic integrands of the form $|i-j|\psi(\nu)$, where $\psi$ denotes a  norm different from the Euclidean one, are in general not $BD$-elliptic\footnote{The counterexample can be  extended straightforwardly to the case $\theta(|i-j|)\psi(\nu)$ for a suitable bounded $\theta$ if one does not want to cope with unbounded integrands.}. As these functions  are known to be $BV$-elliptic, the associated functionals are lower semicontinuous in the $SBV$-weak topology considered  in \cite{Amb},  but not in the analogous topology in the space of special functions of bounded deformation. A similar counterexample can also be provided for the case of integrands which are anisotropic in the jump direction, see Example \ref{counterexample2}.

Whereas $BD$-ellipticity provides a complete theoretical framework for lower semicontinuity of surface energies in $PR(\Omega)$ and can be used for providing counterexamples in the larger space $GSBD^p$, a  complete characterization of  lower semicontinuity in $GSBD^p$ is still missing. Indeed,  unlike the $BV$ case, we cannot reconduct in general  the problem to the analogous one for piecewise rigid functions, essentially due to the lack of a "coarea-like" formula in our setting. However, our results can be succesfully exploited to tackle the well-posedness of miminum problems for energies in $GSBD^p$, provided we assume the surface integrands to be symmetric jointly convex. In fact, we can give a direct proof that functionals of the form \eqref{eq: BDmodel}, with $f$ symmetric jointly convex, are lower semicontinuous along sequences of $GSBD^p$ functions which converge in measure, whose jump sets have uniformly bounded $\mathcal{H}^{d-1}$-measure, and whose symmetrized gradient have equibounded $L^p$ norm,  see Theorem \ref{thm: GSBD LSC}. The proof makes use of an integration-by-parts formula, which in its turn relies on the fact that the vector fields in \eqref{eq: sjc} are conservative.  The latter ensures that  the Lebesgue part of the distributional divergence of the composite functions $V_h(u)$, with $u \in GSBD(\Omega)$, only depends on $V_h$, $u$, and the symmetrized gradient $e(u)$, see Lemma \ref{lemma: BD-int-by-parts}.   Eventually, this  allows us to successfully adapt the localization procedure  of \cite[Theorem 3.6]{Amb}  to our setting. \BBB Let us mention that this lower semicontinuity result is an important ingredient to characterize relaxations of variational problems defined on
 $GSBD^p$ \cite{FPM}. \EEE

As a final remark, we point out that, if we combine the above-mentioned results of Sections \ref{sec: examples} and \ref{sec: GSBD} with  compactness in  $GSBD$  \cite[Theorem 11.3]{DM}, we obtain the well-posedness  of some variational problems of relevant applied interest,  such as 
\[
\begin{split}
&  u \mapsto \int_\Omega |e(u)|^p\,\mathrm{d}x+\int_{J_u}(1+ |u^+-u^-|)\,\mathrm{d}\mathcal{H}^{d-1}+\int_\Omega  \Psi(|u|)  \,\mathrm{d}x\,,\\
& u \mapsto\int_\Omega |e(u)|^p\,\mathrm{d}x+\int_{J_u} \big(1+ \min\lbrace |u^+-u^-|, M\rbrace \big)  \,\mathrm{d}\mathcal{H}^{d-1}+\int_\Omega \Psi(|u|)\,\mathrm{d}x\\
& u \mapsto\int_\Omega |e(u)|^p\,\mathrm{d}x+\int_{J_u}(1+ |(u^+-u^-)\odot \nu_u|)\,\mathrm{d}\mathcal{H}^{d-1}+\int_\Omega \Psi(|u|)\,\mathrm{d}x\\
& u \mapsto\int_\Omega |e(u)|^p\,\mathrm{d}x+\int_{J_u}\big(1+ G_M((u^+-u^-)\odot \nu_u)\big)\,\mathrm{d}\mathcal{H}^{d-1}+\int_\Omega \Psi(|u|)\,\mathrm{d}x\,,
\end{split}
\]
where $G_M$ is the suitable truncation of the Frobenius norm of $(u^+-u^-)\odot \nu_u$ introduced in \cite{DalMOT},  see \eqref{eq: first} above,  and   $\Psi:[0,+\infty) \to [0,+\infty)$   is a coercive function needed for applying the compactness theorem. We consider this as being a major outcome of our results.  Let us mention that we  did not address in this paper the possibility of working in the  larger  space $GSBD^p_\infty$ introduced  recently  in \cite{Crismale-Friedrich}, building on a  recent compactness result by {\sc Chambolle and Crismale} \cite{Crismale}. This  would allow us to drop the additional term $\int_\Omega \Psi(|u|)\,\mathrm{d}x$, in favor of Dirichlet boundary conditions. In  any case, this is a further interesting issue which we plan to address in the future.

   \textbf{Organization of the paper and notation:}  The paper is organized as follows. In Section \ref{sec: general} we introduce our setting,  define $BD$-ellipticity,  and prove the lower semicontinuity of $BD$-elliptic functionals in $PR(\Omega)$.  Here, we also address the problem of relaxation.   Section \ref{sec: symmetric joint convexity} is devoted to the notion of symmetric joint convexity. There, we prove $BD$-ellipticity and lower semicontinuity in $PR(\Omega)$ of the associated energies. In Section \ref{sec: examples},  we discuss   the aforementioned relevant examples of functionals which comply with our assumptions, as well as the comparison between the notions of $BV$- and $BD$-ellipticity. Finally, in Section \ref{sec: GSBD} we prove that surface energies associated to symmetric jointly convex integrands are lower semicontinuous in $GSBD^p$.

We close the introduction by fixing notations. Throughout the paper,  $\Omega \subset \R^d$ is open, bounded with Lipschitz boundary. Let $\mathcal{A}(\Omega)$ be the  family of open subsets of $\Omega$, and  let $\mathcal{A}_0(\Omega) \subset \mathcal{A}(\Omega)$ be the subset of sets with regular boundary.    The notations $\mathcal{L}^d$ and $\mathcal{H}^{d-1}$ are used for  the Lebesgue measure, and the $(d-1)$-dimensional Hausdorff measure in $\mathbb{R}^d$, respectively. For an $\mathcal{L}^d$-measurable set $E\subset\mathbb{R}^d$,  the symbol $\chi_E$ denotes its characteristic function. For $A,B \in \mathcal{A}(\Omega)$ with $\overline{B} \subset A$, we write $B \subset \subset A$. By $A \triangle B = (A\setminus B) \cup (B \setminus A)$ we denote the symmetric difference of two sets.   The symbol $B_R(x)$ denotes a ball of radius $R$ centered at $x$.

 Components of vectors $\mu \in \R^d$ are generally indicated by $\mu_k$, $k=1,\ldots,d$.  We write $\langle \mu,\mu' \rangle$ for the scalar product of two vectors $\mu,\mu' \in \R^d$.  The space of symmetric and skew-symmetric matrices is denoted by $\R^{d\times d}_{\rm sym}$ and $\R^{d\times d}_{\rm skew}$, respectively, while the identity in $\R^{d \times d}$ is indicated by ${\rm Id}$.  The Frobenius norm of a matrix $A \in \R^{d \times d}$ is indicated by $|A|$, and $\Vert A \Vert$ denotes the operator norm.  The scalar product of two matrices $A,B \in \R^{d \times d}$ is indicated by $A:B$.   The symbol $\mathbb{S}^{d-1}$ stands for the unit sphere in $\mathbb{R}^d$. For $\nu \in \mathbb{S}^{d-1}$, we denote   by  $ Q_\rho^\nu  \subset \R^d$ the $d$-dimensional cube, centered in the origin, with sidelength $\rho>0$, and two faces orthogonal to $\nu$.


\section{$BD$-ellipticity and lower semicontinuity}\label{sec: general}

In this section, we consider functionals defined on piecewise rigid functions, and we characterize lower semicontinuity in terms of an integral condition that we call \emph{$BD$-ellipticity}.   Afterwards, we also address the problem of relaxation. 

\subsection{Definitions}\label{sec: def}

In this subsection, we collect the basic definitions. 

\textbf{Function spaces:} First, we define the space of \emph{piecewise rigid functions} by
\begin{align}\label{eq: PR-def}
PR(\Omega) := & \big\{ u\colon \Omega \to   \R^d \text{ $\mathcal{L}^d$-measurable: } \   u(x) = \sum\nolimits_{k\in \N} (Q_k\, x + b_k) \chi_{P_k}(x) \ \  \text{ $\forall x \in \Omega$}, \notag \\
 &  \ \ \text{where }  Q_k \in \R^{d\times d}_{\rm skew}, \, b_k \in \R^d, \text{ and }  (P_k)_k  \text{ is a Caccioppoli partition of } \Omega \big\}. 
\end{align}
 We will sometimes use the shorthand  $a_{Q,b}(x):=Qx+b$  with $Q\in \R^{d\times d}_{\rm skew}$ and $b \in \R^d$.  It follows from the properties of Caccioppoli partitions,   see \cite[Section 4.4]{Ambrosio-Fusco-Pallara:2000},   that for each $u \in PR(\Omega)$ we have that $\mathcal{H}^{d-1}(J_u \setminus \bigcup_k \partial^* P_k) = 0$ and thus $\mathcal{H}^{d-1}(J_u)  < + \infty$.  We also note that the representation in \eqref{eq: PR-def} can always be chosen in such a way that also $\mathcal{H}^{d-1}(J_u \triangle ( \bigcup_k \partial^* P_k \setminus \partial \Omega) ) = 0$ holds, cf.\  \cite[Equation (3.2)]{FM}. In the following, we say that a sequence $(u_h)_h$ converges to $u$ in $PR(\Omega)$ if $\sup_h \mathcal{H}^{d-1}(J_{u_h})<+\infty$ and $u_h \to u$ in measure on $\Omega$.   

If $u\in PR(\Omega)$ has the form $u = \sum\nolimits_{k\in \N} b_k \chi_{P_k}$, i.e., $Q_k=0$  for all $k\in\mathbb{N}$  in representation \eqref{eq: PR-def},  then  $u\in PC(\Omega)$, where $PC(\Omega)\subset PR(\Omega)$ denotes the subspace of \emph{piecewise constant functions}.   \BBB  We also note that in \cite[Theorem 2.2]{Conti-Iurlano:15} the inclusion $PR(\Omega) \subset (GSBV(\Omega))^d$ has been shown. (See \cite[Section 4.5]{Ambrosio-Fusco-Pallara:2000} for the definition and properties of the latter function space.) \EEE   

\textbf{$BV$- and $BD$-ellipticity:} Given  $\mathcal{L}^d$-measurable  functions   $f\colon\R^d\times\R^d\times \mathbb{S}^{d-1}\mapsto [0,+\infty)$, we consider integral functionals  $\mathcal{F}\colon PR(\Omega) \times  \mathcal{A}(\Omega)   \to  [0,+\infty)  $ of the form
\begin{align}\label{eq: basic problem}
\mathcal{F}(u,A) = \int_{J_u\cap  A} f(u^+,u^-,\nu_u)\, {\rm d}\mathcal{H}^{d-1}\quad \quad \forall \, u \in PR(\Omega), \quad \quad \forall\,A\in\mathcal{A}(\Omega),
\end{align}
where $u^+, u^-$ represent the approximate  one-sided traces of $u$ on $J_u$,  $\nu_u$ denotes  a  unit normal to the jump (i.e., a normal to the interface), and $f$ represents an interfacial energy density.  (In the following, we will sometimes also write $[u]:= u^+ - u^-$.)  We often write $\mathcal{F}(u)$ instead of $\mathcal{F}(u,\Omega)$ if no confusion arises.   We assume that the functions $f$ satisfy the symmetry condition 
\begin{align}\label{eq: symmetry}
f(i,j,\nu) = f(j,i,-\nu)\quad \quad \text{for all } (i,j,\nu) \in \R^d\times \R^d \times \mathbb{S}^{d-1}.
\end{align}

We first restrict the functionals onto the subspace $PC(\Omega)$ and recall the notion of $BV$-ellipticity introduced in \cite{AmbrosioBraides2}. Fix  $(i,j,\nu)\in \R^d\times\R^d\times \mathbb{S}^{d-1}$ with $i\neq j$, and define the function $u_{i,j,\nu}\colon Q_1^\nu \rightarrow \R^d$ by
\begin{align}\label{u_0}
u_{i,j,\nu}(x):=
\begin{cases}
i &\langle x, \nu\rangle  > 0,\\
j &\langle x , \nu \rangle \leq 0.
\end{cases}
\end{align}
Then, we say that $f$ is $BV$-elliptic if 
$$
\int_{J_v}f(v^+,v^-,\nu_v)\, {\rm d}\mathcal{H}^{d-1} \geq f(i,j,\nu) \quad \quad
$$
for any $v \in PC(Q^\nu_1)$ such that $\{u_{i,j,\nu}\neq v  \}\subset\!\subset Q^\nu_{1}$ and for any triple $(i,j,\nu)$ in the domain of $f$  with $i \neq j$.  This notion is necessary and sufficient for lower semicontinuity in the space $PC(\Omega)$,  whenever $f$ is continuous and bounded, see \cite[Theorem 5.14]{Ambrosio-Fusco-Pallara:2000}.  It plays the analogous role of quasiconvexity for integral functionals defined on Sobolev spaces.  In particular,  we recall from \cite[Theorem 5.11]{Ambrosio-Fusco-Pallara:2000} the following two necessary conditions for lower semicontinuity  for continuous densities:  
\begin{itemize}
\item[i)](\emph{subadditivity}) for any $\rho\in  \mathbb{S}^{d-1} $ we have 
\begin{align*}
f(i,j,\rho)\leq f(i,k,\rho)+f(k,j,\rho)\quad \forall\,i,j,k\in \R^d;
\end{align*}
\item[ii)](\emph{convexity}) for any $i,j\in\R^d$, the function $\rho\mapsto f(i,j,\rho)$ is convex in $\R^d$.
\end{itemize}
 (We point out that, strictly speaking, \cite[Theorem 5.11, Theorem 5.14]{Ambrosio-Fusco-Pallara:2000} have been shown only when $i$ and $j$ are chosen from a countable, bounded subset of $\R^d$.  An inspection of the proofs, however,  shows  that  the results can be generalized to the whole $\R^d$.) 

 We now introduce a similar notion for functionals defined on $PR(\Omega)$ which we call \emph{$BD$-ellipticity}. 

\begin{definition}
Let $f:\R^d\times\R^d\times \mathbb{S}^{d-1}\mapsto [0,+\infty)$ be  an $\mathcal{L}^d$-measurable  function. We say that $f$ is \emph{$BD$-elliptic} if 
\begin{align}\label{eq: def-BD}
\int_{J_v}f(v^+,v^-,\nu_v)\, {\rm d}\mathcal{H}^{d-1} \geq f(i,j,\nu)
\end{align}
for any  $v \in PR(Q^\nu_1)$ such that $\{u_{i,j,\nu}\neq v  \}\subset\!\subset Q^\nu_{1}$ and for any triple $(i,j,\nu)$ in the domain of $f$  with $i \neq j$. 
\end{definition}
We have chosen this name  in analogy  to $BV$-ellipticity to highlight that $PR(\Omega)$ is related to the space $BD$, whereas the space $PC(\Omega)$ is related to the theory of $BV$-functions.    We also remark that inequality  \eqref{eq: def-BD} needs to hold only  for $i \neq j$, as the values $f(i,i, \nu)$ clearly do not  matter for the functionals in \eqref{eq: basic problem}. 

We observe that every $BD$-elliptic function is of course also $BV$-elliptic since $PC(\Omega)\subset PR(\Omega)$.  In particular, the two properties stated above (subadditivity and convexity) are necessary for $BD$-ellipticity.   The reverse implication does not hold,  i.e., $BV$- and $BD$-ellipticity are really different notions.  In fact, functions $f\colon \R^d\times \R^d\times \mathbb{S}^{d-1}\to [0,+\infty)$ of the form
\begin{align}\label{eq: BV class}
f(i,j,\nu) = \theta(i,j) \,\psi(\nu)
\end{align}
are $BV$-elliptic if $\theta\colon\R^d\times \R^d \to[0,+\infty)$ is a pseudo-distance (i.e., positive, symmetric obeying the triangle inequality) and $\psi\colon\R^d \to [0,+\infty)$ is  even, positively $1$-homogeneous, and convex. We refer to \cite[Example 5.23]{Ambrosio-Fusco-Pallara:2000} for details.  On the other hand, as we will detail below in Examples \ref{counterexample}--\ref{counterexample2}, these functions are in general not $BD$-elliptic if they are \emph{anisotropic}, i.e., if  $\theta( i, j)$ oscillates on $\lbrace (i,j)\colon |i-j| = const.\rbrace$ or $\psi(\nu)$ oscillates for $\nu \in \mathbb{S}^{d-1}$.

\subsection{Characterization of lower semicontinuity}
Recall that $BV$-ellipticity has been identified as a necessary and sufficient condition for lower semicontinuity of functionals defined on $PC(\Omega)$. In this subsection, we establish a corresponding result in $PR(\Omega)$ in terms of $BD$-ellipticity. To this end, we need to assume a slightly stronger continuity condition of the integrands $f$,  namely uniform continuity in the first two variables:  there exists an increasing modulus of continuity $\sigma:[0,+\infty) \to  [0,+\infty)  $ with $\sigma(0)=0$ such that for any $(i_1,j_1), (i_2,j_2) \in \R^d \times \R^d$  we have
\begin{align}\label{eq: continuity}
 |f(i_1,j_1,\nu)  - f(i_2,j_2,\nu)| \le   \sigma\big(|i_1-i_2| + |j_1-j_2|\big).  
\end{align}

\begin{theorem}[Characterization of lower semicontinuity]\label{thm: LSC iff BD-ell + bounded}
Let $f\colon\R^d\times\R^d\times \mathbb{S}^{d-1}\to [0,+\infty)$ be a bounded, continuous  function satisfying \eqref{eq: symmetry} and \eqref{eq: continuity}. Then, $\mathcal{F}$ defined in (\ref{eq: basic problem}) is lower semicontinuous in $PR(\Omega)$ if and only if $f$ is $BD$-elliptic. 
\end{theorem}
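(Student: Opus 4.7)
The plan is to prove both directions separately, following the blow-up and localization strategy devised in \cite{AmbrosioBraides2}, adapted to the $PR$ setting via the fundamental estimate of Lemma~\ref{lemma: fundamental estimate2-new}.

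\textbf{Necessity.} To see that lower semicontinuity forces $BD$-ellipticity, fix a triple $(i,j,\nu)$ with $i\neq j$ and a competitor $v\in PR(Q_1^\nu)$ with $\{v\neq u_{i,j,\nu}\}\subset\!\subset Q_1^\nu$. I would extend $v$ to $\R^d$ by $u_{i,j,\nu}$ and then tile $Q_1^\nu$ with rescaled copies $v_h(x):= (1/h)\,v(hx)$ arranged along the interface $\{\langle x,\nu\rangle=0\}$, so that $v_h\in PR(Q_1^\nu)$ converges in measure to $u_{i,j,\nu}$ and $\mathcal{H}^{d-1}(J_{v_h})$ remains uniformly bounded: each tile contributes $h^{-(d-1)}\mathcal{H}^{d-1}(J_v)$ while only $\sim h^{d-1}$ tiles intersect the interface. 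By scaling invariance and localization to a neighborhood $A\subset\!\subset Q_1^\nu$ of the support of $v-u_{i,j,\nu}$, one obtains $\mathcal{F}(v_h,A)\to \int_{J_v\cap A} f(v^+,v^-,\nu_v)\,\mathrm{d}\mathcal{H}^{d-1}$, whereas $\mathcal{F}(u_{i,j,\nu},A)=f(i,j,\nu)\,\mathcal{H}^{d-1}(A\cap\{\langle x,\nu\rangle=0\})$. Lower semicontinuity of $\mathcal{F}(\cdot,A)$, which passes from $\Omega$ to $A$ by an outer regularity argument on the associated set function, yields the $BD$-ellipticity inequality.

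\textbf{Sufficiency.} Conversely, let $u_h\to u$ in $PR(\Omega)$ with $\liminf_h \mathcal{F}(u_h)<\infty$, and extract a subsequence realizing the $\liminf$. Associate the finite Radon measures
\[
\mu_h:= f(u_h^+,u_h^-,\nu_{u_h})\,\mathcal{H}^{d-1}\mres J_{u_h};
\]
boundedness of $f$ and of $\mathcal{H}^{d-1}(J_{u_h})$ ensure $\sup_h \mu_h(\Omega)<\infty$, so a further subsequence satisfies $\mu_h\rightharpoonup^\ast \mu$ weakly$^\ast$. By the Radon--Nikodym and Besicovitch differentiation theorems it suffices to prove that at $\mathcal{H}^{d-1}$-a.e.\ $x_0\in J_u$, writing $i=u^+(x_0)$, $j=u^-(x_0)$, $\nu=\nu_u(x_0)$,
\[
\limsup_{\rho\downarrow 0}\rho^{-(d-1)}\mu(x_0+Q_\rho^\nu)\geq f(i,j,\nu).
\]
At such a typical jump point I would select radii $\rho_k\downarrow 0$ with $\mu(x_0+\partial Q_{\rho_k}^\nu)=0$ and, diagonally, indices $h_k$ so that the rescalings $\tilde u_k(y):= u_{h_k}(x_0+\rho_k y)$ (regarded as elements of $PR(Q_1^\nu)$ after subtracting an affine skew-symmetric correction matching $u^\pm(x_0)$) converge in measure to $u_{i,j,\nu}$, while $\rho_k^{-(d-1)}\mu_{h_k}(x_0+Q_{\rho_k}^\nu)$ approaches the $\limsup$ above. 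Since in general $\tilde u_k\neq u_{i,j,\nu}$ near $\partial Q_1^\nu$, the $BD$-ellipticity inequality is not directly applicable: the decisive step is to invoke the fundamental estimate (Lemma~\ref{lemma: fundamental estimate2-new}) to glue $\tilde u_k$ on $Q_{1-2\delta}^\nu$ with $u_{i,j,\nu}$ on $Q_1^\nu\setminus Q_{1-\delta}^\nu$, producing competitors $w_k\in PR(Q_1^\nu)$ with $\{w_k\neq u_{i,j,\nu}\}\subset\!\subset Q_1^\nu$ and
\[
\int_{J_{w_k}} f(w_k^+,w_k^-,\nu_{w_k})\,\mathrm{d}\mathcal{H}^{d-1}\leq \int_{J_{\tilde u_k}\cap Q_{1-\delta}^\nu} f(\tilde u_k^+,\tilde u_k^-,\nu_{\tilde u_k})\,\mathrm{d}\mathcal{H}^{d-1}+\eta_{k,\delta},
\]
with $\eta_{k,\delta}\to 0$ as first $k\to\infty$ and then $\delta\to 0$. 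Applying $BD$-ellipticity to $w_k$ and passing to the limit in $k$ and $\delta$ closes the argument.

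\textbf{Main obstacle.} The principal difficulty is to control the transition error $\eta_{k,\delta}$. The fundamental estimate creates an annular interface of $\mathcal{H}^{d-1}$-measure $O(\delta)$ on which the traces of $w_k$ interpolate between those of $\tilde u_k$ and the constants $i,j$; these traces differ from $i,j$ by small but nonzero errors stemming both from the affine skew-symmetric parts of $u_{h_k}$ and from the deviation of $\tilde u_k$ from $u_{i,j,\nu}$. It is precisely here that the uniform continuity assumption \eqref{eq: continuity} enters: it allows us to replace the actual traces of $w_k$ on the transition annulus by $i,j$ with an error controlled by $\sigma$ evaluated on quantities that vanish in measure (by convergence $\tilde u_k\to u_{i,j,\nu}$), hence in average. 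Without a uniform modulus, one could not absorb this annular contribution into $\eta_{k,\delta}$, since on the annulus the trace convergence is only in measure, not uniform; this is exactly why \eqref{eq: continuity} is strictly stronger than mere pointwise continuity of $f$. The boundedness of $f$, by contrast, enters only to secure tightness of $\mu_h$ and will be relaxable under the $L^\infty$- or monotone-approximation hypotheses of Corollaries~\ref{cor: unbounded} and \ref{cor: f=sup fh is BD and LSC}.
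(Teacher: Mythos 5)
Your overall strategy coincides with the paper's: tiling along the interface for necessity, and the fundamental estimate of Lemma~\ref{lemma: fundamental estimate2-new} combined with a blow-up reduction for sufficiency. Two details in your sketch need correction.

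In the necessity step, the rescaled tiles $v_h(x) = (1/h)\,v(hx)$ do not work: dividing the \emph{values} by $h$ destroys the boundary matching with $u_{i,j,\nu}$ (near the top face of a small tile one would read $i/h$, not $i$), so the tiled function would not agree with $u_{i,j,\nu}$ on $\Omega\setminus Q_1^\nu$ and would not converge to it. The correct construction — the one the paper uses — rescales only the argument: on the tile $Q_n$ centered at $x_n$ (a cube of side $1/h$ in the strip $\{0<\langle x,\nu\rangle<1/h\}$), set $u_h(x) = v(h(x-x_n))$, with $u_h=i$ above the strip and $u_h = j$ below. This keeps $u_h\in PR$, preserves the boundary data, and yields the same scaling count $h^{d-1}\cdot h^{-(d-1)}\mathcal{H}^{d-1}(J_v)$ you intended.

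In the sufficiency step your outline agrees with the paper's Step~2, which is only carried out for $\Omega = Q_1^\nu$ and $u=u_{i,j,\nu}$, the general reduction being deferred to the blow-up/covering argument of \cite[Theorem 5.14, Steps 2--3]{Ambrosio-Fusco-Pallara:2000}; your explicit sketch via Besicovitch differentiation is a reasonable expansion of what the paper leaves implicit. Note that no affine skew-symmetric correction is actually needed when forming $\tilde u_k(y) = u_{h_k}(x_0+\rho_k y)$: the affine parts of $u$ are killed by the rescaling, and a standard diagonal argument (using $u_h\to u$ in measure plus the $\mathcal H^{d-1}$-a.e.\ blow-up of $u$ at jump points) already gives $\tilde u_k\to u_{i,j,\nu}$ in measure. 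One further gap: the fundamental estimate as stated requires $\inf f>0$, so the case $\inf f=0$ needs the separate approximation $f_\eps = f+\eps$ (which is $BD$-elliptic because constants are, cf.\ Proposition~\ref{prop: only nu}), followed by $\eps\to 0$ using the uniform bound on $\mathcal H^{d-1}(J_{u_h})$. Your identification of the role of uniform continuity — controlling the glueing error through the modulus $\sigma$ evaluated on a quantity vanishing with the convergence in measure of $\tilde u_k$ to $u_{i,j,\nu}$ — is exactly right, and matches how the term $I_2^{h,\eta}$ vanishes in the paper's proof.
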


In order to prove the above result, we need the following fundamental estimate slightly adapted for our purposes, see \cite[Lemma 4.5]{FM}. 

\begin{lemma}\label{lemma: fundamental estimate2-new}
Let $\eta >0$ and  $A', A, B \in \mathcal{A}_0(\Omega)$ with $A' \subset \subset  A$. Let $f:\R^d\times\R^d\times \mathbb{S}^{d-1}\to [0,+\infty)$ be a measurable, bounded function satisfying  \eqref{eq: symmetry}, \eqref{eq: continuity},  and $\inf f >0$.   Let $\psi\colon [0,+\infty) \to [0,+\infty)$ be continuous and strictly increasing with $\psi(0) = 0$. Then, there exist a function $\Psi\colon PR(B) \to (0,+\infty]$  and a  lower semicontinuous function  $\Lambda: PR(A) \times PR(B) \to [0,+\infty]$ satisfying 
\begin{align}\label{eq: Lambda0}
\Lambda(z_1,z_2) \to 0 \ \text{ whenever } \ \int_{(A\setminus A')\cap B} \psi(|z_1 - z_2|) \to 0
\end{align}
such that for all $u \in PR(A)$ and  $v \in PR(B)$ satisfying the condition 
\begin{align}\label{eq: extra condition}
\Lambda(u,v) \le \Psi(v)
\end{align}  
there exists a function $w \in PR(A' \cup B)$ such that
\begin{align}\label{eq: assertionfund-newnew}
{\rm (i)}& \ \ \mathcal{F}(w, A' \cup B) \le  \mathcal{F}(u,A)  + \mathcal{F}(v, B)  + \big(M + \mathcal{F}(u,A)  + \mathcal{F}(v, B) \big) \big(2\eta +  M'  \sigma (\Theta(u,v))\big), \notag \\ 
{\rm (ii)}& \ \   \Vert \min\lbrace |w - u|, |w-v| \rbrace \Vert_{L^\infty(A' \cup B)} \le \Theta(u,v),\notag\\
{\rm (iii)} & \ \ w = v \text{ on } B\setminus A.
\end{align}
Here,  $\mathcal{F}$ is of the form \eqref{eq: basic problem}, $\sigma$ is given in \eqref{eq: continuity}, and  $M,M'>0$  as well as $\Theta\colon PR(A) \times PR(B) \to [0,+\infty]$  are independent of $u$ and $v$. Moreover,  $M$ is also independent of $\eta$ and   $\Theta$   is a  lower  semicontinuous  function satisfying 
\begin{align}\label{eq: Theta convergence}
\Theta(z_1,z_2) \to 0 \ \text{ whenever } \ \int_{(A\setminus A')\cap B} \psi(|z_1 - z_2|) \to 0\,.
\end{align}
\end{lemma}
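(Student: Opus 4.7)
The plan is to build $w$ by a layered cut-and-paste construction adapted to the piecewise rigid setting. The naive mollified cut-off $w = \varphi u + (1-\varphi) v$ is not in $PR$, so the transition between $u$ and $v$ must be realized along interfaces of a suitable joint refinement of the Caccioppoli partitions of $u$ and $v$, rather than by smooth blending. The auxiliary functions $\Psi$, $\Lambda$, $\Theta$ will encode, respectively, an intrinsic geometric separation scale of $v$'s partition, a combined $L^1$-and-perimeter defect between $u$ and $v$ on the transition annulus $(A \setminus A') \cap B$ weighted by $\psi$, and the $L^\infty$-size of the mismatch on components that are actually matched.

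Concretely, I would first fix an integer $N \sim 1/\eta$ and a family of nested open sets $A' \subset \subset A_1 \subset \subset \dots \subset \subset A_N \subset \subset A$ with $\mathcal{H}^{d-1}(\partial A_k)$ uniformly bounded in $N$, and apply a pigeonhole argument to select an index $k$ whose boundary captures at most a fraction $\eta$ (after normalization by $\inf f > 0$) of the total jump energy $\mathcal{F}(u,A) + \mathcal{F}(v,B)$; this produces the additive $2\eta$ term and fixes the constant $M$ in \eqref{eq: assertionfund-newnew}(i), which depends only on $(A', A, B)$. Second, I would exploit \eqref{eq: extra condition} to set up a pairing of components: when $\Lambda(u,v) \le \Psi(v)$, each sizeable component of $v$'s partition meeting the annulus overlaps essentially a unique component of $u$'s partition, with the two associated rigid motions differing in $L^\infty$ by at most $\Theta(u,v)$. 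I would then define $w := u$ on $A_k$, $w := v$ on $(A' \cup B) \setminus A$, and inside each component of $v$'s partition intersected with the annulus $A_{k+1} \setminus A_k$ replace $v$ by the affine rigid motion $a_{Q,b}$ of the matched component of $u$. By construction $w \in PR(A' \cup B)$, its jump set is contained in $(J_u \cap A_k) \cup (J_v \cap ((A' \cup B) \setminus A_k)) \cup \partial A_k$ up to a negligible matching residue, and (ii)--(iii) follow at once; the estimate (i) then comes from the boundedness of $f$ together with the uniform continuity \eqref{eq: continuity} applied on the matched strata where $(w^+,w^-)$ differs from $(u^+,u^-)$ (respectively $(v^+,v^-)$) by at most $\Theta(u,v)$.

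The main obstacle is the matching step. In the analogous $BV$ lemma \cite[Lemma 4.4]{AmbrosioBraides}, the coarea formula in $BV$ supplies a thin "good slice" on which $u$ and $v$ can be swapped at almost no surface cost. No such slicing is available in $BD$, so all of the technical work has to be concentrated into designing $\Lambda$, $\Psi$, $\Theta$ so that the pairing of components is canonical whenever \eqref{eq: extra condition} holds, the three functionals are lower semicontinuous on $PR(A) \times PR(B)$, and the convergences \eqref{eq: Lambda0} and \eqref{eq: Theta convergence} hold via standard arguments identifying $L^1$-closeness of rigid motions on bounded sets with closeness of their affine coefficients. This is precisely the bookkeeping carried out in \cite[Lemmas 4.1 and 4.4]{FM}, to which the present statement is a direct adaptation.
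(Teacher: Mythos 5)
Your proposal and the paper's proof ultimately both rest on the same source, but they present it at different levels. The paper's actual proof is a pure reduction: it verifies that $\mathcal{F}$ satisfies hypotheses (H$_1$), (H$_3$), (H$_4$), (H$_5'$) of \cite[Lemma 4.5]{FM} (using that $f$ is measurable, that \eqref{eq: basic problem} gives the locality/integral form, that $0 < \inf f \le \sup f < \infty$, and that \eqref{eq: continuity} holds), identifies $\Psi$ with $\tfrac{1}{M_1}\Phi(A',A'\cup B; v|_{B\setminus \overline{A'}},\delta)$, and then reads off $M := \mathcal{H}^{d-1}(\partial A' \cup \partial A \cup \partial B)$ and $M' := M_2$ from \cite[Equation~(4.7)]{FM}. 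That is the entire content.

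Your version instead sketches the \emph{internal} construction of \cite[Lemmas 4.1 and 4.4]{FM}: nested annuli with a pigeonhole choice of a good layer, pairing of rigid pieces of $u$ and $v$ across the annulus, replacement of $v$ by the matched rigid motion of $u$ on each such piece, and uniform continuity to bound the energy error on the matched strata. This is in the right spirit, and you correctly identify the real obstruction (no coarea formula in $BD$, so no cheap ``good slice''). Two cautionary remarks, though. First, what you describe as ``each sizeable component of $v$'s partition $\ldots$ overlaps essentially a unique component of $u$'s partition'' and the subsequent one-shot replacement is a cleaner story than what actually happens: the construction in \cite{FM} has to work with a common refinement of the two partitions on the transition region and select, piece by piece, which rigid motion to keep, with \eqref{eq: extra condition} ensuring the resulting modification of $v$ is small enough that (iii) survives. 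If the matching were as canonical as you suggest, condition \eqref{eq: extra condition} (and the companion remark that dropping it costs you (iii)) would be redundant. Second, the jump set of the glued $w$ picks up not just $\partial A_k$ but also the boundaries of the swapped refined pieces, and it is precisely the choice of the good layer plus the $\inf f > 0$ normalization that makes this extra contribution of order $\eta$; your phrasing glosses over where this surplus surface actually lives. Neither point is a fatal gap for a proof \emph{sketch} that explicitly defers to \cite{FM} for the bookkeeping, but as written your outline would not constitute an independent proof of the lemma. For the purposes of this paper, the shorter reduction to \cite[Lemma 4.5]{FM} is the intended argument.
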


In the above result, we follow the convention that $u \in PR(A)$ and $v \in PR(B)$ are extended by $u = 0$ and $v = 0$ outside of $A$ and $B$, respectively.  Condition \eqref{eq: extra condition} is necessary to ensure \eqref{eq: assertionfund-newnew}(iii). As detailed  in \cite[Lemma 4.1]{FM},   \eqref{eq: extra condition} can be removed at the expense of dropping also \eqref{eq: assertionfund-newnew}(iii).

\begin{proof}
We briefly explain how the result follows from \cite[Lemma 4.5]{FM}. The functional $\mathcal{F}$ satisfies (H$_1$) since $f$ is measurable and (H$_3$) holds due to the integral representation \eqref{eq: basic problem}.  Property (H$_4$) follows from the fact that $f$ is bounded and satisfies $\inf f >0$,  where we set $\alpha = \inf f>0$ and   $\beta =  \sup f< \infty$.   Property \eqref{eq: continuity} implies (H$_5^\prime$). Condition \eqref{eq: extra condition}  is equivalent to  \cite[Equation (4.6)]{FM} for given $\delta>0$,  $M_1 \ge 0$,   and $\Psi(v) := \frac{1}{M_1} \Phi(A',A'\cup B; v|_{B \setminus \overline{A'}},\delta)  \in (0,+\infty]$.  Then,  \cite[Equation (4.7)]{FM} implies  \eqref{eq: assertionfund-newnew}, where we set  $M := \mathcal{H}^{d-1}(\partial A' \cup \partial A \cup \partial B)$ and  $M'  := M_2$.  
\end{proof}

\begin{proof}[Proof of Theorem \ref{thm: LSC iff BD-ell + bounded}]
Our proof is in the spirit of \cite[Theorem 5.14]{Ambrosio-Fusco-Pallara:2000} with the essential difference that in the implication ``$BD$-ellipticity implies lower semicontinuity'' we replace the lemma of joining two functions, see \cite[Lemma 5.15]{Ambrosio-Fusco-Pallara:2000}  or \cite[Lemma 4.4]{AmbrosioBraides},  by our fundamental estimate stated in Lemma \ref{lemma: fundamental estimate2-new}. We show the two directions separately. 

\noindent \emph{Step 1: Lower semicontinuity implies $BD$-ellipticity.} The argument is very similar to the one used in \cite[Theorem 5.14]{Ambrosio-Fusco-Pallara:2000}, and we therefore only sketch it. Up to a rescaling and a translation of $Q^\nu_1$, we may assume that $Q^\nu_1 \subset \Omega$. Consider $i,j\in\R^d$  with $i \neq j$,  $\nu \in \mathbb{S}^{d-1}$, and some $v \in PR(Q^\nu_1)$ with $\{u_{i,j,\nu}\neq v  \}\subset\!\subset Q_{1}^\nu$. For  each $h \in \N$,   we define $u_h \in PR(\Omega)$ by $u_h = u_{i,j,\nu}$ on $\Omega \setminus Q^\nu_1$ and inside $Q^\nu_1$ we set 
\begin{align*}
u_h(x) = \begin{cases}
i & \text{if } \langle x,\nu \rangle > 1/h,\\
v(h(x-x_n)) & \text{if }   0<\langle x,\nu\rangle < 1/h  \text{ and } x \in Q_n,\\
j & \text{if } \langle x,\nu \rangle <0,
\end{cases}
\end{align*}
where $(Q_n)_n$ denotes a partition of the set $\lbrace x\in Q^\nu_1\colon 0<\langle x,\nu \rangle < 1/h \rbrace$ consisting of $h^{d-1}$ cubes with sidelength $1/h$,  and $x_n$ indicates the center of $Q_n$.  As $\{u_{i,j,\nu}\neq v  \}\subset\!\subset Q_{1}^\nu$, we find by a scaling argument 
\begin{align*}
\mathcal{F}(u_h)& \le  \sum_{n=1}^{h^{d-1}} \int_{J_{u_h} \cap Q_n} f(u_h^+,u^-_h,\nu_{u_h}) \, {\rm d}\mathcal{H}^{d-1} + \Vert f \Vert_\infty\mathcal{H}^{d-1}(J_{u_h} \cap \partial Q^\nu_1) + \mathcal{H}^{d-1}(\Gamma) f(i,j,\nu)\\& \le \int_{J_{v} \cap Q^\nu_1} f(v^+,v^-,\nu_{v}) \, {\rm d}\mathcal{H}^{d-1}+ C\Vert f\Vert_\infty/h + \mathcal{H}^{d-1}(\Gamma) f(i,j,\nu),
\end{align*}
where $\Gamma := \lbrace x\colon \langle x,\nu \rangle = 0 \rbrace \cap (\Omega \setminus Q_1^\nu)$  and $C>0$ is a universal constant.  Since $\mathcal{L}^d(\lbrace u_h \neq u_{i,j,\nu} \rbrace) \le 1/h$, we find $u_h \to  u_{i,j,\nu}  $ in measure on $\Omega$. Therefore, by the lower semicontinuiy of $\mathcal{F}$ we conclude
 \begin{align*}
 \int_{J_{v} \cap Q^\nu_1} f(v^+,v^-,\nu_{v}) \, {\rm d}\mathcal{H}^{d-1} & \ge \liminf_{h\to \infty} \mathcal{F}(u_h) - \mathcal{H}^{d-1}(\Gamma) f(i,j,\nu) \ge  \mathcal{F}(u_{i,j,\nu}) - \mathcal{H}^{d-1}(\Gamma) f(i,j,\nu) \\ & = \big(\mathcal{H}^{d-1}(J_{u_{i,j,\nu}} \cap \Omega) - \mathcal{H}^{d-1}(\Gamma) \big)f(i,j,\nu) = f(i,j,\nu).
 \end{align*}       
 This shows that $f$ is $BD$-elliptic.

\noindent \emph{Step 2: $BD$-ellipticity implies lower semicontinuity.} We detail this step only in the special case  $\Omega=Q_1^\nu$ for the special limiting function $u_{i,j,\nu}$ for some $i,j\in\R^d$  with $i\neq j$  and $\nu \in \mathbb{S}^{d-1}$. The general case follows by  standard covering and blow up arguments. We refer to Step 2 and Step 3 in the proof of \cite[Theorem 5.14]{Ambrosio-Fusco-Pallara:2000} for details.

 Let $(u_h)_{h}\subset PR(Q_1^\nu)$ be a sequence converging to $u_{i,j,\nu}$  in $PR(Q^\nu_1)$. In particular, we have $\sup_h \mathcal{H}^{d-1}(J_{u_h})<+\infty$, and the boundedness of $f$ then implies    
 \begin{align}\label{eq: boundedness}
 \sup\nolimits_h\mathcal{F}(u_h,Q^\nu_1) <+\infty.
\end{align} 
 We first suppose that $\inf f>0$ and explain the small adaptions for $\inf f = 0$ at the end of the proof.  We want to construct a sequence $(w_h)_{h}\subset PR(Q^\nu_1)$ such that $\{u_{i,j,\nu}\neq w_h  \}\subset\!\subset Q_{1}^\nu$   and  such that   the energy of $w_h$ is  asymptotically controlled by the one of $u_h$. Our  strategy relies on Lemma~\ref{lemma: fundamental estimate2-new}. 
 
 To this end, we first fix  $\eta >0$, $\rho>0$, and define the sets  $B,A,A'\subset  \mathcal{A}_0(Q_1^\nu)$ by
 $A' = Q^\nu_{1-2\rho}$, $A = Q^\nu_{1-\rho}$, and $B = Q^\nu_1 \setminus Q^\nu_{1-3\rho}$. Note that $A' \cup B = Q^\nu_1$. In order to  apply Lemma \ref{lemma: fundamental estimate2-new} for $u =  u_h|_A  $ and $v =  u_{i,j,\nu}|_B$, we need to check \eqref{eq: extra condition}. As $u_h \to u_{i,j,\nu}$ in measure on $Q^\nu_1$, we clearly get  
\begin{align}\label{eq: measure conv}
\int_{Q_1^\nu} \psi (|u_h - u_{i,j,\nu}|)\to 0
\end{align}
  as $h \to \infty$, where $\psi\colon [0,+\infty) \to [0,+\infty)$ is defined by $\psi(t) := t/(1+t)$  for $t \ge 0$.   Therefore, $\Lambda(u_h|_A,u_{i,j,\nu}|_B) \to 0$ by  \eqref{eq: Lambda0}. Consequently, there holds  $ \Lambda(u_h|_A,u_{i,j,\nu}|_B) \le \Psi(u_{i,j,\nu}|_B)$  for all $h$ sufficiently large and thus \eqref{eq: extra condition} holds.  

We apply Lemma \ref{lemma: fundamental estimate2-new} for $u = u_h|_A$ and $v =  u_{i,j,\nu}|_B$, and obtain $w_h \in PR(Q^\nu_1)$ such that by  \eqref{eq: assertionfund-newnew} there holds $w_h = u_{i,j,\nu}$ on $Q^\nu_1 \setminus Q^\nu_{1-\rho}$  and 
\begin{align}\label{eq: ...<....+I_1+I_2-XXX}
\mathcal{F}(w_h,Q^\nu_1)&\leq \mathcal{F}\big(u_h, Q^\nu_{1-\rho}\big)+ \mathcal{F}\big(u_{i,j,\nu},Q^\nu_{1} \setminus Q^\nu_{1-3\rho}\big) + I^{h, \eta}_1 +I^{h,\eta}_2,
\end{align}
where for shorthand we have set 
\begin{align*}
I^{h,\eta}_1&= 2\eta\left(M+ \mathcal{F}(u_h, A)+ \mathcal{F}(u_{i,j,\nu},B) \right), \\
I^{h,\eta}_2&=  M'  \sigma\big(  \Theta (u_h|_A, u_{i,j,\nu}|_B)\big)  \left( M+ \mathcal{F}(u_h, A)+ \mathcal{F}(u_{i,j,\nu},B) \right).
\end{align*}
Since $f$ is nonnegative and $BD$-elliptic, and there holds $\{u_{i,j,\nu}\neq w_h  \}\subset\!\subset Q_{1}^\nu$, we get by \eqref{eq: ...<....+I_1+I_2-XXX}
\begin{align}\label{eq: ...<....+I_1+I_2}
\mathcal{F}(u_{i,j,\nu},Q_1^\nu)\le  \mathcal{F}(w_h, Q^\nu_1) \le \mathcal{F}\big(u_h, Q^\nu_1\big)+ \mathcal{F}\big(u_{i,j,\nu}, Q^\nu_{1} \setminus Q^\nu_{1-3\rho}\big) + I^{h,\eta}_1 +I^{h,\eta}_2.
\end{align}
By  $\sigma(0)=0$,  \eqref{eq: Theta convergence}, \eqref{eq: boundedness}, and \eqref{eq: measure conv} we obtain $\lim_{h \to \infty} I^{h,\eta}_2 = 0$. This along with \eqref{eq: ...<....+I_1+I_2} implies
\begin{align*}
\mathcal{F}(u_{i,j,\nu},Q_1^\nu)&\leq \liminf_{h\rightarrow \infty} \mathcal{F}(u_h, Q_1^\nu)+ \mathcal{F}\big(u_{i,j,\nu}, Q^\nu_{1} \setminus Q^\nu_{1-3\rho}\big) +\sup\nolimits_h I^{h,\eta}_1.
\end{align*}
By \eqref{eq: boundedness}  and the fact that $M$ is independent of $\eta$  we get $\lim_{\eta \to 0} (\sup_h I_1^{h,\eta})=0$. Thus, passing to the limits $\eta,\rho \to 0$, we obtain the desired estimate
\begin{align*}
\mathcal{F}(u_{i,j,\nu},Q_1^\nu)&\leq \liminf_{h\rightarrow \infty} \mathcal{F}(u_h, Q_1^\nu).
\end{align*}
This concludes the proof  in the case $\inf f>0$.  If $\inf f= 0$ instead, we consider densities $f_\eps = f+ \eps$ for arbitrary $\eps>0$. As $f$ is $BD$-elliptic and the constant function with value $\eps$ is $BD$-elliptic (see e.g.\ Proposition \ref{prop: only nu}   below), we see that also $f_\eps$ is $BD$-elliptic.
Then, the functional $\mathcal{F}_\eps$ with density $f_\eps$ is lower semicontinuous and we obtain
\begin{align*}
\mathcal{F}(u) \le \mathcal{F}_\eps(u) \le \liminf_{h \to \infty} \mathcal{F}_\eps(u_h) \le \liminf_{h \to \infty} \mathcal{F}(u_h)+\eps \sup\nolimits_h \mathcal{H}^{d-1}(J_{u_h}).
\end{align*}
We conclude the proof by passing to $\eps \to 0$  and using  \eqref{eq: boundedness}. 
\end{proof}

\begin{remark}\label{rem: just bounded}
For later purposes, we note that  in Step 1 of the  proof we only used the boundedness of $f$ but not its continuity. In other words, lower semicontinuity in $PR(\Omega)$ implies $BD$-ellipticity for bounded, measurable functions $f\colon\R^d\times\R^d\times \mathbb{S}^{d-1}\to [0,+\infty)$ satisfying \eqref{eq: symmetry}. 
\end{remark}

We now drop the condition that $f$ is a bounded function, and obtain the following  two  corollaries. 

\begin{corollary}[Lower semicontinuity for unbounded functions]\label{cor: unbounded}
If $f$ is  a continuous,  $BD$-elliptic function  satisfying \eqref{eq: symmetry},  then $\mathcal{F}$ defined in (\ref{eq: basic problem})  is lower semicontinuous along sequences  converging  in $PR(\Omega)$  which are bounded in $L^\infty(\Omega;\R^d)$.  
\end{corollary}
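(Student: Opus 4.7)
The plan is to reduce to Theorem \ref{thm: LSC iff BD-ell + bounded} by running its Step 2 argument with a bounded continuous surrogate for $f$ that coincides with $f$ on the range of values accessible to the functions at play. Let $(u_h)_{h} \subset PR(\Omega)$ converge to $u$ in $PR(\Omega)$ with $\sup_h \Vert u_h\Vert_{L^\infty} \le R$; convergence in measure forces $\Vert u\Vert_{L^\infty} \le R$ as well, and we may assume $\liminf_h \mathcal{F}(u_h) < \infty$. By the standard blow-up and covering localization (cf.\ Steps 2--3 of the proof of \cite[Theorem 5.14]{Ambrosio-Fusco-Pallara:2000}) it suffices to prove the inequality in the model case $\Omega = Q_1^\nu$, $u = u_{i,j,\nu}$, with $i\ne j$, $|i|,|j| \le R$, and $\nu \in \mathbb{S}^{d-1}$.

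Following Step 2 of the proof of Theorem \ref{thm: LSC iff BD-ell + bounded}, I would fix $\eta,\rho>0$, set $A'=Q_{1-2\rho}^\nu$, $A=Q_{1-\rho}^\nu$, $B=Q_1^\nu\setminus Q_{1-3\rho}^\nu$, and first treat the case $\inf f > 0$. Combining \eqref{eq: assertionfund-newnew}(ii) with the convergences \eqref{eq: Theta convergence} and \eqref{eq: measure conv}, any function produced by Lemma \ref{lemma: fundamental estimate2-new} from $u_h|_A$ and $u_{i,j,\nu}|_B$ will, for $h$ sufficiently large, take values in $\overline{B}_{R'}$ with $R' := R + |i|+|j| +1$. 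Since $f$ is continuous, it is bounded on $\overline{B}_{R'}\times\overline{B}_{R'}\times \mathbb{S}^{d-1}$, so I would introduce a bounded continuous surrogate $\tilde f\colon \R^d\times\R^d\times \mathbb{S}^{d-1}\to [0,+\infty)$, satisfying \eqref{eq: symmetry} and \eqref{eq: continuity}, with $\inf \tilde f \ge \inf f >0$, and such that $\tilde f = f$ on $\overline{B}_{R'}\times\overline{B}_{R'}\times \mathbb{S}^{d-1}$; concretely, $\tilde f(i',j',\nu) := f(\pi_{R'}(i'), \pi_{R'}(j'), \nu)$, with $\pi_{R'}$ the radial retraction onto $\overline{B}_{R'}$, will do. Applying Lemma \ref{lemma: fundamental estimate2-new} to $\tilde f$ gives $w_h \in PR(Q_1^\nu)$ with $\{u_{i,j,\nu} \ne w_h\} \subset\subset Q_1^\nu$ and the bound \eqref{eq: ...<....+I_1+I_2-XXX} for the functional $\tilde{\mathcal{F}}$ associated to $\tilde f$; since $u_h|_A$, $u_{i,j,\nu}|_B$, and $w_h$ all take values in $\overline{B}_{R'}$ where $\tilde f \equiv f$, the estimate is in fact an estimate for $\mathcal{F}$. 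Now $BD$-ellipticity is invoked for the \emph{original} $f$ (we do not claim $\tilde f$ is $BD$-elliptic) to deduce $f(i,j,\nu) = \mathcal{F}(u_{i,j,\nu}, Q_1^\nu) \le \mathcal{F}(w_h, Q_1^\nu)$, and the limits $h\to\infty$, $\eta\to 0$, $\rho\to 0$ are taken exactly as in Theorem \ref{thm: LSC iff BD-ell + bounded}. The case $\inf f = 0$ is handled by the same $\varepsilon$-perturbation: $f+\varepsilon$ remains $BD$-elliptic since constants are, and $\sup_h \mathcal{H}^{d-1}(J_{u_h})<\infty$ controls the error as $\varepsilon \to 0$.

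The main delicate point is arranging the bounded surrogate so that the fundamental estimate never pushes its competitors outside the region where $\tilde f \equiv f$: this hinges on the uniform-in-$h$ bound $\Vert w_h\Vert_{L^\infty} \le R'$ extracted from \eqref{eq: assertionfund-newnew}(ii), which is precisely where the $L^\infty$-hypothesis on the sequence enters in an essential way. Without that uniform bound one would have no control on the range of the joined function produced by the fundamental estimate, and no globally bounded surrogate that coincides with $f$ on the relevant region could be fixed at the outset of the argument.
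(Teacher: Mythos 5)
Your proposal is correct and follows the same basic strategy as the paper: replace $f$ by a bounded continuous surrogate $\tilde f$ that agrees with $f$ on the relevant compact region, and then run the machinery of Theorem~\ref{thm: LSC iff BD-ell + bounded}. The paper's own proof is, however, much terser: it simply chooses $\tilde f$ agreeing with $f$ for $|i|,|j|\le M$ and satisfying \eqref{eq: continuity}, and states ``the statement now follows from Theorem~\ref{thm: LSC iff BD-ell + bounded}.'' Read literally, this would require $\tilde f$ to be $BD$-elliptic, which is not claimed and is not obvious (truncation does not in general preserve $BD$-ellipticity, and neither does a radial retraction). What you add is precisely the missing explanation of why that does not matter: you only apply Lemma~\ref{lemma: fundamental estimate2-new} to $\tilde f$ (which needs boundedness, \eqref{eq: continuity}, and $\inf>0$, but not ellipticity), and you invoke $BD$-ellipticity of the \emph{original} $f$ at the single point where it enters, namely $f(i,j,\nu)\le\mathcal F(w_h,Q_1^\nu)$. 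This is legitimate because estimate \eqref{eq: assertionfund-newnew}(ii) together with $\Theta\to 0$ confines $w_h$ (and hence its one-sided traces) to $\overline B_{R'}$ for $h$ large, where $\tilde f\equiv f$, so $\tilde{\mathcal F}(w_h)=\mathcal F(w_h)$ and the resulting chain of inequalities is actually a statement about $\mathcal F$. Your $\inf f=0$ treatment via the $\varepsilon$-shift and your observation about where the $L^\infty$-hypothesis enters are also both correct.

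In short, this is the same approach as the paper's, carried out at a level of detail that the paper skips. The only (minor) stylistic remark is that your radius $R'=R+|i|+|j|+1$ is more than needed — since $|i|,|j|\le R$ already, $R'=R+1$ suffices once $\Theta(u_h|_A,u_{i,j,\nu}|_B)\le 1$ — but this is cosmetic and does not affect validity.
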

\begin{proof}
Given $(u_h)_{h}$ with $M:= \sup_{h\in\mathbb{N}}\Vert u_h \Vert_\infty <+\infty$, we choose a bounded, continuous function  $\tilde{f}\colon\R^d\times\R^d\times \mathbb{S}^{d-1}\to [0,+\infty)$ such that $f(i,j,\nu) = \tilde{f}(i,j,\nu)$ whenever $|i|,|j| \le M$ and $\nu \in \mathbb{S}^{d-1}$. By uniform continuity on compact sets, this can be achieved in such a way that $\tilde{f}$ satisfies also \eqref{eq: continuity}. The statement now follows from Theorem \ref{thm: LSC iff BD-ell + bounded} noting that the sequence  of energies  $(\mathcal{F}(u_h))_h$ remains unchanged when $f$ is replaced by $\tilde{f}$ in \eqref{eq: basic problem}.    
\end{proof}

\begin{corollary}[Supremum of bounded $BD$-elliptic functions]\label{cor: f=sup fh is BD and LSC}
Let $f_h\colon\R^d\times\R^d\times \mathbb{S}^{d-1}\rightarrow [0,+\infty)$, $h\in\mathbb{N}$, be a sequence of continuous, bounded, and $BD$-elliptic functions satisfying   \eqref{eq: symmetry} and  \eqref{eq: continuity}. Suppose $f(i,j,\nu):=\sup_{h\in\mathbb{N}}f_h(i,j,\nu)<+\infty$ for all $i,j\in \R^d$,  $i \neq j$,  and $\nu \in \mathbb{S}^{d-1}$. Then, $f$ is $BD$-elliptic and the corresponding functional $\mathcal{F}$ defined in (\ref{eq: basic problem}) is lower semicontinuous on $PR(\Omega)$.
\end{corollary}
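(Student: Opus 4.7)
The argument splits naturally into two parts, and the plan is to dispatch each with essentially soft reasoning, after arranging things so that Theorem \ref{thm: LSC iff BD-ell + bounded} becomes applicable.

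I would first establish $BD$-ellipticity of $f$. Fix a triple $(i,j,\nu) \in \R^d \times \R^d \times \mathbb{S}^{d-1}$ with $i \neq j$ and a competitor $v \in PR(Q_1^\nu)$ satisfying $\{u_{i,j,\nu} \neq v\}\subset\!\subset Q_1^\nu$. Since $f \geq f_h$ pointwise and each $f_h$ is $BD$-elliptic, applying \eqref{eq: def-BD} to $f_h$ yields
\[
\int_{J_v} f(v^+, v^-, \nu_v)\, \mathrm{d}\mathcal{H}^{d-1} \geq \int_{J_v} f_h(v^+, v^-, \nu_v)\, \mathrm{d}\mathcal{H}^{d-1} \geq f_h(i,j,\nu),
\]
and taking the supremum over $h \in \N$ gives \eqref{eq: def-BD} for $f$.

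For the lower semicontinuity statement, I would not try to apply Theorem \ref{thm: LSC iff BD-ell + bounded} or Corollary \ref{cor: unbounded} directly to $f$: as a supremum of continuous functions, $f$ is only lower semicontinuous in $(i,j,\nu)$, and it need not be bounded. The plan is instead to work with the monotone envelopes $\tilde f_h := \max_{k \leq h} f_k$. Each $\tilde f_h$ is bounded, continuous, and inherits \eqref{eq: symmetry} trivially and \eqref{eq: continuity} (with modulus $\max_{k \leq h} \sigma_k$, where $\sigma_k$ is the modulus associated with $f_k$). Moreover, the argument of the first step, applied to the finite family $\{f_1, \ldots, f_h\}$, shows that $\tilde f_h$ is itself $BD$-elliptic. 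Theorem \ref{thm: LSC iff BD-ell + bounded} then yields lower semicontinuity of the associated functional $\tilde{\mathcal{F}}_h$ on $PR(\Omega)$.

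The conclusion is then the standard fact that a pointwise supremum of lower semicontinuous functionals is lower semicontinuous, combined with monotone convergence to identify $\mathcal{F}(u)$ with the relevant supremum. Concretely, for $u \in PR(\Omega)$, since $\tilde f_h \uparrow f$ pointwise and $\mathcal{H}^{d-1}(J_u) < +\infty$, monotone convergence applied to the finite measure $\mathcal{H}^{d-1}\mres J_u$ gives $\mathcal{F}(u) = \sup_h \tilde{\mathcal{F}}_h(u)$. Hence, for any sequence $(u_k)_k$ converging to $u$ in $PR(\Omega)$,
\[
\mathcal{F}(u) = \sup_{h \in \N} \tilde{\mathcal{F}}_h(u) \leq \sup_{h \in \N} \liminf_{k \to \infty} \tilde{\mathcal{F}}_h(u_k) \leq \liminf_{k \to \infty} \mathcal{F}(u_k),
\]
where the final step uses $\tilde f_h \leq f$, so that $\tilde{\mathcal{F}}_h(u_k) \leq \mathcal{F}(u_k)$ for every $h, k$. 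There is no serious obstacle in the argument; the only subtle point is that one must pass to the monotone envelopes $\tilde f_h$, since working with the original sequence $f_h$ only gives $\sup_h \mathcal{F}_h(u) \leq \mathcal{F}(u)$, which is too weak to close the lower semicontinuity chain.
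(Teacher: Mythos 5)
Your Step 1 is essentially the paper's. Step 2, however, takes a genuinely different route. The paper's proof fixes $u_n \to u$ in $PR(\Omega)$, introduces the superadditive set function $\Lambda(U) := \liminf_n \int_{J_{u_n}\cap U} f\, \mathrm{d}\mathcal{H}^{d-1}$, uses lower semicontinuity of each $\mathcal{F}_h$ (with integrand $f_h$) to get $\Lambda(U) \ge \int_{J_u\cap U} f_h\, \mathrm{d}\mathcal{H}^{d-1}$ for all $U$ and all $h$, and then invokes the De Giorgi--Letta-type localization lemma (Lemma~\ref{lem: localization}) to commute the supremum over $h$ with the integral. You instead pass to the monotone envelopes $\tilde f_h := \max_{k\le h} f_k$, verify that each $\tilde f_h$ retains boundedness, continuity, \eqref{eq: symmetry}, \eqref{eq: continuity} (with modulus $\max_{k\le h}\sigma_k$), and $BD$-ellipticity (by the same finite-supremum argument as in Step 1), apply Theorem~\ref{thm: LSC iff BD-ell + bounded} to each $\tilde{\mathcal{F}}_h$, and close the chain with monotone convergence on the finite measure $\mathcal{H}^{d-1}\mres J_u$ plus the trivial bound $\tilde{\mathcal{F}}_h \le \mathcal{F}$. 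Both are correct; yours is more self-contained (it avoids importing Lemma~\ref{lem: localization}, at the small cost of checking the hypotheses for the envelopes), while the paper's is the standard localization argument that works uniformly over all open sets $U$ without needing to arrange monotonicity of the approximating integrands. One small point worth making explicit in your version: the statement does not assume a common modulus $\sigma$ for the $f_h$, and indeed each $f_h$ may have its own $\sigma_h$; your choice of $\max_{k\le h}\sigma_k$ as the modulus for $\tilde f_h$ handles this correctly precisely because the max is over finitely many indices.
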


To prove the above corollary, let us recall the following lemma (see, e.g., \cite[Lemma 15.2]{Braides:02}).
\begin{lemma}\label{lem: localization}
Let $\Omega$ be an open subset of $\R^d$. Let $\Lambda$ be  a  set function defined on $\mathcal{A}(\Omega)$, which is supperadditive on open sets with disjoint compact closure, i.e., $ \Lambda  (U \cup V) \geq \Lambda(U) + \Lambda(V)$ whenever $U$, $V \subset \subset \Omega$ and $\ove U \cap \ove V = \emptyset$.  Let $\lambda$ be a positive measure on $\Omega$, and let $(\varphi_h)_{h}$ be a sequence of nonnegative Borel functions on $\Omega$ such that  $\Lambda(A) \geq \int_A \varphi_h \,\mathrm{d}\lambda$ for every $A\in \mathcal{A}(\Omega)$ and  $h \in \N$. Then, $\int_{A}\sup_h\varphi_h {\rm d}\lambda\leq \Lambda(A)$ for every $A\in \mathcal{A}(\Omega)$.
\end{lemma}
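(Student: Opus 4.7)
The plan is to follow the classical Braides-style localization argument. I would proceed in three stages: reducing to a finite maximum, partitioning $A$ into regions where a single $\varphi_h$ attains the pointwise maximum, and then chaining the two hypotheses together via inner regularity of $\lambda$.

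First, set $\psi_N := \max_{h \leq N} \varphi_h$. Since $\psi_N \nearrow \sup_h \varphi_h$ pointwise, by monotone convergence it suffices to prove $\int_A \psi_N \, \mathrm{d}\lambda \leq \Lambda(A)$ for every $N \in \N$ and every $A \in \mathcal{A}(\Omega)$. Fix such $N$ and $\varepsilon > 0$, and define pairwise disjoint Borel sets $B_1, \ldots, B_N \subseteq A$ by $B_h := \{x \in A : \psi_N(x) = \varphi_h(x)\} \setminus \bigcup_{j<h} B_j$, so that $\bigcup_h B_h = A$ and $\int_A \psi_N \, \mathrm{d}\lambda = \sum_{h=1}^N \int_{B_h} \varphi_h \, \mathrm{d}\lambda$. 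Using inner regularity of $\lambda$, I choose compacts $K_h \subseteq B_h$ with $\int_{B_h \setminus K_h} \varphi_h \, \mathrm{d}\lambda < \varepsilon/N$ (truncating $\varphi_h$ first if needed to secure integrability). Since the $K_h$'s are pairwise disjoint compacts inside the open set $A$, a standard topological argument based on the positive distance between disjoint compacts and between each $K_h$ and $\partial A$ produces open sets $U_h$ with $K_h \subseteq U_h \subset\subset A$ and $\overline{U_h} \cap \overline{U_j} = \emptyset$ for $h \neq j$.

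Finally, I would chain the hypotheses. Applying the pointwise bound $\Lambda(U_h) \geq \int_{U_h} \varphi_h \, \mathrm{d}\lambda$ on each slab, then iterating superadditivity over the pairwise disjoint compact closures, yields
\[
\sum_{h=1}^N \int_{K_h} \varphi_h \, \mathrm{d}\lambda \leq \sum_{h=1}^N \Lambda(U_h) \leq \Lambda\Big(\bigcup\nolimits_{h=1}^N U_h\Big) \leq \Lambda(A),
\]
the last inequality being monotonicity, which in the intended applications of this lemma either is a standing hypothesis or follows from $\Lambda$ being defined as an inner regular envelope (e.g.\ a $\liminf$ of a functional along a convergent sequence, as in the proof of Corollary \ref{cor: f=sup fh is BD and LSC}). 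Combined with the choice of the $K_h$'s, this gives $\int_A \psi_N \, \mathrm{d}\lambda - \varepsilon \leq \Lambda(A)$, and letting $\varepsilon \to 0$ and then $N \to \infty$ finishes the proof.

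The main subtlety is the final step $\Lambda\big(\bigcup_h U_h\big) \leq \Lambda(A)$: superadditivity alone does not yield monotonicity, so one must either invoke an extra monotonicity hypothesis or exploit the fact that in applications $\Lambda$ is the inner regular envelope of an already monotone quantity. The Borel partitioning, the inner approximation by compacts, and the topological separation of the $K_h$'s into open sets with pairwise disjoint closures inside $A$ are routine measure-theoretic manipulations.
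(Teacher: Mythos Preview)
The paper does not supply its own proof of this lemma; it simply recalls the statement and refers to \cite[Lemma~15.2]{Braides:02}. Your argument is precisely the classical proof given there: reduce to a finite maximum via monotone convergence, partition $A$ into the Borel sets where each $\varphi_h$ realises the pointwise maximum, approximate these from inside by pairwise disjoint compacts using inner regularity of the (finite, since $\Lambda(A)<\infty$ may be assumed) measures $\varphi_h\,\mathrm{d}\lambda$, separate the compacts by open sets $U_h\subset\subset A$ with pairwise disjoint closures, and chain the hypothesis $\Lambda(U_h)\ge\int_{U_h}\varphi_h\,\mathrm{d}\lambda$ with iterated superadditivity.

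You are also right to isolate the inequality $\Lambda\big(\bigcup_h U_h\big)\le\Lambda(A)$ as the one delicate point: superadditivity alone does not yield monotonicity, and neither the paper's statement nor Braides's lists it explicitly among the hypotheses. In both applications in this paper --- the proof of Corollary~\ref{cor: f=sup fh is BD and LSC} and Step~1 of the proof of Theorem~\ref{thm: GSBD LSC} --- the set function $\Lambda$ is built from a $\liminf$ (and, in the second case, a supremum) of nonnegative integrals over the open set, so it is trivially increasing and the step goes through. Your identification of this as an implicit standing assumption, rather than a consequence of the stated hypotheses, is accurate, and with it your proof is complete and matches the cited reference.
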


\begin{proof}[Proof of Corollary \ref{cor: f=sup fh is BD and LSC}] We first prove $BD$-ellipticity and then lower semicontinuity.
 
\noindent \emph{Step 1: $BD$-ellipticity.} We first show that $f$ is $BD$-elliptic. Fix a triple $(i,j,\nu)\in \R^d\times \R^d \times \mathbb{S}^{d-1}$  with $i \neq j$.  Let $u\in PR(Q_1^\nu)$ be such that $\{ u\neq u_{i,j,\nu} \}\subset \!\subset Q_1^\nu$, where  $u_{i,j,\nu}$ is defined in (\ref{u_0}). Then, since each  $f_h$ is $BD$-elliptic, we get 
\begin{align*}
\mathcal{F}(u,Q_1^\nu)&= \int_{J_u}f(u^+,u^-,\nu_u)\,{\rm d}\mathcal{H}^{d-1}=\int_{J_{u}}\sup_{h\in\mathbb{N}}f_h(u^+,u^-,\nu_u)\,{\rm d}\mathcal{H}^{d-1}\\&\geq \sup_{h\in\mathbb{N}}\int_{J_{u}}f_h(u^+,u^-,\nu_u)\,{\rm d}\mathcal{H}^{d-1}\geq \sup_{h\in\mathbb{N}}  f_h(i,j,\nu) = f(i,j,\nu).
\end{align*}
\emph{Step 2: Lower semicontinuity.} Consider a sequence $(u_n)_n\subset PR(\Omega)$ and $u\in PR(\Omega)$ such that $u_n\to u $   in $PR(\Omega)$  as $n \to \infty$. Our goal is to show 
\begin{align}\label{eq: thesis of cor f=sup fh is BD and LSC}
\liminf_{n\rightarrow \infty}\mathcal{F}(u_n,\Omega)\geq \mathcal{F}(u,\Omega).
\end{align}
In view of Theorem \ref{thm: LSC iff BD-ell + bounded}, the functional with integrand $f_h$ is lower semicontinuous  for every $h\in\mathbb{N}$. Therefore, we get for every $U \in \mathcal{A}(\Omega)$
\begin{align}\label{eq: 1}
\liminf_{n\rightarrow \infty} \int_{J_{u_n} \cap U}f(u_n^+,u_n^-,\nu_{u_n})\,{\rm d}\mathcal{H}^{d-1}&\geq \liminf_{n\rightarrow \infty} \int_{J_{u_n}\cap U}f_h(u_n^+,u_n^-,\nu_{u_n})\,{\rm d}\mathcal{H}^{d-1}\notag \\ &\geq \int_{J_{u}\cap U}f_h(u^+,u^-,\nu_u)\,{\rm d}\mathcal{H}^{d-1}.
\end{align} 
We define the superadditive function $\Lambda\colon \mathcal{A}(\Omega)\rightarrow [0,+\infty)$ by
\begin{align*}
\Lambda (U):= \liminf_{n\rightarrow \infty}\int_{J_{u_n}\cap U}f(u^+_n,u^-_n,\nu_{u_n})\,{\rm d}\mathcal{H}^{d-1}
\end{align*}
 for each $U \in \mathcal{A}(\Omega)$.  Thus, by (\ref{eq: 1}) we obtain 
\begin{align*}
\Lambda (U) \geq \int_{J_u\cap U} f_h (u^+,u^-,\nu_u) \, {\rm d}\mathcal{H}^{d-1}
\end{align*}
for all $U\in\mathcal{A}(\Omega)$ and all $h\in\mathbb{N}$. By applying Lemma \ref{lem: localization} we get that
$$
\Lambda (U) \geq \int_{J_u\cap U} \sup_{h\in\mathbb{N}}f_h (u^+,u^-,\nu_u)\,{\rm d}\mathcal{H}^{d-1} = \int_{J_u\cap U} f (u^+,u^-,\nu_u)\,{\rm d}\mathcal{H}^{d-1}  
$$
for all $U \in \mathcal{A}(\Omega)$. For $U = \Omega$,  we obtain \eqref{eq: thesis of cor f=sup fh is BD and LSC}. This concludes the proof. 
\end{proof}

\subsection{Relaxation}
In this subsection, we address the relaxation of integral functionals of the form  \eqref{eq: basic problem}. For simplicity, we restrict our study to the class of translational invariant integrands, i.e., functions $f\colon\R^d\times \R^d \times  \mathbb{S}^{d-1}\mapsto [0,+\infty)$ satisfying 
\begin{align}\label{eq: restriction} 
f(i,j,\nu) = f(i+t,j+t,\nu)\quad \quad \text{for all } (i,j,\nu) \in \R^d\times \R^d \times \mathbb{S}^{d-1} \text{ and } t \in \R^d. 
\end{align} 
In other words,  differently from what considered so far,   functionals of the form (\ref{eq: basic problem}) depend on the two vectors $([u],\nu_u)$ rather than on the more general triple $(u^+,u^-,\nu_u)$. (For consistency, we keep the notation $(u^+,u^-,\nu_u)$ in the following.) This assumption  is due to the fact that we will use an integral representation result \cite{FM} which has been proved in this slightly more specific setting only. In \cite{FM}, however, translational invariance is assumed just to simplify the exposition, and a generalization to the general situation of  \eqref{eq: basic problem} would in principle be possible.  We note that, under \eqref{eq: restriction}, the continuity condition \eqref{eq: continuity} can be reduced to
\begin{align}\label{eq: continuity2}
|f(\xi,0,\nu)-f(\tau,0,\nu)|  \le   \sigma\big(|\xi-\tau|\big) \quad\quad \text{ for all } \xi,\tau\in\R^d, \nu \in \mathbb{S}^{d-1}.
\end{align} 
Before we come to the main result of this subsection,  we introduce a further notation: for every $u \in PR(\Omega)$ and $A \in \mathcal{A}(\Omega)$ we define 
$$
\mathbf{m}_{\mathcal{F}}(u,A) = \inf_{v \in PR(\Omega)} \  \big\{ \mathcal{F}(v,A)\colon \    \{v\neq u\}\subset\subset A  \big\}.
$$


\begin{theorem}\label{thm: don't worry be happy}
Let $f: \R^d\times \R^d \times \mathbb{S}^{d-1}\rightarrow [0,+\infty)$ be a bounded, continuous function satisfying \eqref{eq: symmetry},   \eqref{eq: restriction}, \eqref{eq: continuity2}, and $\inf f >0$. Let $\mathcal{F}$ be defined in (\ref{eq: basic problem}). Then, the relaxed functional  defined as
\begin{align}\label{relaxed functional}
\bar{\mathcal{F}}(u,A):= \inf\left\{\liminf_{h\rightarrow \infty}\mathcal{F}(u_h,A)\colon\,u_h\rightarrow u \textit{ in $PR(A)$}   \right\}
\end{align}
admits an integral representation, namely
\begin{align}\label{eq: integral representation}
\bar{\mathcal{F}}(u,A)= \int_{J_u\cap A}\varphi(u^+,u^-,\nu_u)\, {\rm d}\mathcal{H}^{d-1}\quad\quad\forall \, u \in PR(\Omega), \quad \quad \forall\,A\in\mathcal{A}(\Omega).
\end{align}
Here, the function $\varphi\colon \R^d\times \R^d \times \mathbb{S}^{d-1}\rightarrow [0,+\infty)$ is the greatest $BD$-elliptic function with $\varphi \le f$ and is characterized by 
\begin{align}\label{eq: varphi = Ef}
\varphi(i,j,\nu) = \mathbf{m}_{\mathcal{F}}\big(u_{i,j,\nu},Q^\nu_1\big) \quad \quad \text{for all } (i,j,\nu) \in  \R^d \times \R^d \times  \mathbb{S}^{d-1},\ i\neq j. 
\end{align} 
Moreover, $\varphi$ is bounded, continuous and satisfies \eqref{eq: symmetry}, \eqref{eq: restriction}, and  \eqref{eq: continuity2}.
\end{theorem}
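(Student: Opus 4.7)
The plan is to invoke the abstract integral representation theorem from \cite{FM} for the relaxed functional $\bar{\mathcal{F}}$ and then to identify its density via the fundamental estimate. First I would verify the hypotheses of the integral representation result. Locality, the symmetry \eqref{eq: symmetry}, translational invariance \eqref{eq: restriction}, and the two-sided bound $\inf f\cdot\mathcal{H}^{d-1}(J_u\cap A)\le\bar{\mathcal{F}}(u,A)\le\Vert f\Vert_\infty\mathcal{H}^{d-1}(J_u\cap A)$ are all inherited from $\mathcal{F}$ together with the lower semicontinuity of the jump perimeter along sequences converging in $PR$. Lower semicontinuity of $\bar{\mathcal{F}}$ itself follows by a standard diagonal extraction from the definition \eqref{relaxed functional}. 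The delicate measure-theoretic point---subadditivity of $A\mapsto\bar{\mathcal{F}}(u,A)$ on overlapping open sets---is exactly where the fundamental estimate of Lemma \ref{lemma: fundamental estimate2-new} is essential: given approximating sequences on $A$ and $B$, one joins them in a narrow transition layer to obtain a competitor on $A\cup B$ whose energy is essentially the sum of the two. This yields the representation
\[
\bar{\mathcal{F}}(u,A) = \int_{J_u \cap A} \varphi(u^+,u^-,\nu_u)\,\mathrm{d}\mathcal{H}^{d-1}
\]
together with the intrinsic blow-up formula $\varphi(i,j,\nu) = \mathbf{m}_{\bar{\mathcal{F}}}(u_{i,j,\nu},Q^\nu_1)$.

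Next I would upgrade this blow-up formula to \eqref{eq: varphi = Ef} by showing $\mathbf{m}_{\bar{\mathcal{F}}}(u_{i,j,\nu},Q^\nu_1) = \mathbf{m}_{\mathcal{F}}(u_{i,j,\nu},Q^\nu_1)$. The inequality $\le$ is immediate from $\bar{\mathcal{F}}\le\mathcal{F}$. For $\ge$, I would fix a competitor $v\in PR(Q^\nu_1)$ with $\{v\neq u_{i,j,\nu}\}\subset\subset Q^\nu_1$ and invoke \eqref{relaxed functional} to select $v_h\to v$ in $PR(Q^\nu_1)$ with $\mathcal{F}(v_h,Q^\nu_1)\to\bar{\mathcal{F}}(v,Q^\nu_1)$. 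Since the $v_h$ need not satisfy the correct boundary trace, I would restore the value $u_{i,j,\nu}$ on $Q^\nu_1\setminus Q^\nu_{1-3\rho}$ by applying Lemma \ref{lemma: fundamental estimate2-new} with $A'=Q^\nu_{1-2\rho}$, $A=Q^\nu_{1-\rho}$, $B=Q^\nu_1\setminus Q^\nu_{1-3\rho}$, exactly as in Step 2 of the proof of Theorem \ref{thm: LSC iff BD-ell + bounded}. Property \eqref{eq: assertionfund-newnew}(iii) guarantees admissibility, and passing to the limits $h\to\infty$, then $\rho,\eta\to 0$, yields $\mathbf{m}_{\mathcal{F}}(u_{i,j,\nu},Q^\nu_1)\le\bar{\mathcal{F}}(v,Q^\nu_1)$; taking the infimum over $v$ closes the chain.

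The remaining assertions follow quickly. $BD$-ellipticity of $\varphi$ is a consequence of the lower semicontinuity of $\bar{\mathcal{F}}$ on $PR(\Omega)$ together with Remark \ref{rem: just bounded}, since $\varphi$ is bounded, measurable, and satisfies \eqref{eq: symmetry}. Maximality follows directly from the characterization \eqref{eq: varphi = Ef}: any $BD$-elliptic $g\le f$ satisfies, for each competitor $v$ for $\mathbf{m}_{\mathcal{F}}(u_{i,j,\nu},Q^\nu_1)$, the estimate
\[
g(i,j,\nu)\le\int_{J_v}g(v^+,v^-,\nu_v)\,\mathrm{d}\mathcal{H}^{d-1}\le\mathcal{F}(v,Q^\nu_1),
\]
so taking the infimum gives $g(i,j,\nu)\le\varphi(i,j,\nu)$. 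The inherited regularity of $\varphi$ is read off from \eqref{eq: varphi = Ef}: translational invariance by translating simultaneously $u_{i,j,\nu}$ and its competitors; continuity \eqref{eq: continuity2} by shifting a near-optimal competitor for $(i,j,\nu)$ into one for a nearby $(i',j',\nu)$ and bounding the energy change via the modulus $\sigma$ of $f$. The main obstacle is the first step, where the absence of cut-off constructions in $PR$ forces both the subadditivity of $\bar{\mathcal{F}}(u,\cdot)$ and the boundary-trace correction in the identification of $\varphi$ to be handled through careful applications of Lemma \ref{lemma: fundamental estimate2-new}; every other step is then a classical relaxation argument adapted to our setting.
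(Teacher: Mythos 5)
Your proposal follows the paper's proof quite closely: both routes rely on the abstract integral representation machinery of \cite{FM}, verified via the fundamental estimate Lemma~\ref{lemma: fundamental estimate2-new}; both identify the density by establishing $\mathbf{m}_{\bar{\mathcal{F}}}=\mathbf{m}_{\mathcal{F}}$; both deduce $BD$-ellipticity of $\varphi$ from lower semicontinuity via Remark~\ref{rem: just bounded}; and both obtain maximality by comparing $\mathbf{m}$-functionals. Two small points deserve attention, though neither is fatal. First, you assert that the integral representation theorem directly yields the blow-up formula $\varphi(i,j,\nu)=\mathbf{m}_{\bar{\mathcal{F}}}(u_{i,j,\nu},Q^\nu_1)$; the paper instead first proves that $\varphi$ is $BD$-elliptic (via Remark~\ref{rem: just bounded}) and only then deduces this identity from the $BD$-ellipticity inequality \eqref{eq: def-BD}. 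Your logical order has the blow-up formula preceding $BD$-ellipticity, which is circular unless the abstract result is shown to produce exactly this cube-based formula; it is cleaner to establish $BD$-ellipticity first. Second, your treatment of the regularity of $\varphi$ only establishes the uniform continuity \eqref{eq: continuity2} in the trace variables, by shifting near-optimal competitors, but does not address continuity in the normal $\nu$. Shifting competitors is not well-suited there, since both the competitor class and the cube $Q^\nu_1$ rotate with $\nu$. The paper closes this with a one-line argument you should incorporate: once $\varphi$ is $BD$-elliptic, the map $\nu\mapsto\varphi(i,j,\nu)$ is convex (a necessary condition recalled in Subsection~\ref{sec: def}), hence continuous, and combined with \eqref{eq: continuity2} this yields the joint continuity asserted in the theorem.
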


The key ingredient is the following $\Gamma$-convergence and integral representation result, see \cite[Theorem 2.3]{FM}, which is slightly adapted for our purposes. For an exhaustive treatment of $\Gamma$-convergence we refer to \cite{Braides-Defranceschi:98, DalMaso:93}. In particular, we recall that for a constant sequence of functionals the $\Gamma$-limit is given by the lower semicontinuous envelope, cf.\ \cite[Remark 4.5]{DalMaso:93}.

\begin{lemma}[$\Gamma$-convergence and integral representation]\label{th: gamma}
  Let $(\mathcal{F}_n)_n$ be a sequence of functionals of the form  \eqref{eq: basic problem} for continuous densities $f_n \colon\R^d\times\R^d\times \mathbb{S}^{d-1}\to [0,+\infty)$ satisfying  $0<\alpha \le \inf f_n \le \sup f_n \le \beta < +\infty$, \eqref{eq: symmetry},  \eqref{eq: restriction}, and \eqref{eq: continuity2}  for the same function $\sigma$.     Then, there exists $\bar{\mathcal{F}}\colon  PR(\Omega) \times \mathcal{A}(\Omega) \to [0,+\infty)$  and a subsequence (not relabeled) such that
\begin{align}\label{eq: gamma-lim}
\bar{\mathcal{F}}(\cdot,A) =\Gamma\text{-}\lim_{n \to \infty} \mathcal{F}_n(\cdot,A) \ \ \ \ \text{with respect to convergence in measure on $A$} 
\end{align}
for all $A \in  \mathcal{A}_0(\Omega) $. Moreover, if there holds 
\begin{align}\label{eq: condition-new-new}
\limsup_{n \to \infty} \mathbf{m}_{\mathcal{F}_n}(u, B_\eps(x_0)) \le  \mathbf{m}_{\bar{\mathcal{F}}}(u, B_\eps(x_0)) \le \sup\nolimits_{0<\eps' < \eps} \,  \liminf_{n \to \infty} \mathbf{m}_{\mathcal{F}_n}(u, B_{\eps'}(x_0)) 
\end{align}
for all  $u \in PR(\Omega)$ and  each  ball   $B_\eps(x_0) \subset \Omega$,  then   $\bar{\mathcal{F}}$  admits an integral representation of the form \eqref{eq: basic problem} for a density $f$ which satisfies $\alpha \le \inf f \le \sup f \le \beta$, \eqref{eq: symmetry},  \eqref{eq: restriction}, and  \eqref{eq: continuity2}.
\end{lemma}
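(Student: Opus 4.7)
The plan is to adapt the argument of \cite[Theorem 2.3]{FM} to the present setting of a sequence of functionals, proceeding in two stages: first extract a $\Gamma$-convergent subsequence, then derive the integral representation under the additional hypothesis \eqref{eq: condition-new-new}.

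For the first stage, I would apply a standard $\Gamma$-compactness result in the spirit of \cite[Theorem 16.9]{DalMaso:93} to extract, via a diagonal argument, a subsequence along which the $\Gamma$-lower and $\Gamma$-upper limits coincide on a countable dense family $\mathcal{D} \subset \mathcal{A}_0(\Omega)$. The extension to all of $\mathcal{A}_0(\Omega)$ then proceeds by an inner regularity argument based on Lemma \ref{lemma: fundamental estimate2-new}. Crucially, the constants $M$, $M'$ and the functions $\Psi$, $\Lambda$, $\Theta$ produced there depend on $f_n$ only through the bounds $\alpha$, $\beta$ and the modulus $\sigma$, all of which are uniform in $n$; this allows the fundamental estimate to be invoked simultaneously along the whole sequence. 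The topology of convergence in measure is the natural one, as equicoercivity $\inf f_n \ge \alpha$ only yields a uniform bound on $\mathcal{H}^{d-1}(J_{u_n})$ along sequences of equibounded energy.

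For the second stage, I would verify that $\bar{\mathcal{F}}$ fits into the global method for relaxation developed in \cite{BFLM, BDM} and adapted to $PR(\Omega)$ in \cite{FM}. This amounts to checking that $\bar{\mathcal{F}}(u,\cdot)$ extends to a Borel measure on $\mathcal{A}(\Omega)$, that it is local in $u$, two-sidedly bounded by $\alpha \mathcal{H}^{d-1}(J_u \cap A) \le \bar{\mathcal{F}}(u,A) \le \beta \mathcal{H}^{d-1}(J_u \cap A)$, translation invariant in $u$, and satisfies the fundamental estimate. Each of these properties passes to the $\Gamma$-limit from the analogous uniform property of $\mathcal{F}_n$. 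The global method then yields the integral representation \eqref{eq: basic problem} with a Borel density $f$ whose value at $(i,j,\nu)$ is expressed through a cell formula
\[
f(i,j,\nu) \;=\; \lim_{\eps \to 0^+} \frac{\mathbf{m}_{\bar{\mathcal{F}}}\big(u_{i,j,\nu}, B_\eps(x_0)\big)}{\eps^{d-1}\,\mathcal{H}^{d-1}\big(\{x \in B_1(0) : \langle x,\nu \rangle = 0\}\big)},
\]
the limit being independent of the centre $x_0$ by virtue of \eqref{eq: restriction}.

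The main obstacle, and the point at which condition \eqref{eq: condition-new-new} enters crucially, is to identify $f$ in a way that makes its structural properties transparent. Precisely, \eqref{eq: condition-new-new} sandwiches $\mathbf{m}_{\bar{\mathcal{F}}}(u_{i,j,\nu}, B_\eps(x_0))$ between $\limsup_n \mathbf{m}_{\mathcal{F}_n}(\cdot, B_\eps(x_0))$ and $\sup_{\eps'<\eps}\liminf_n \mathbf{m}_{\mathcal{F}_n}(\cdot, B_{\eps'}(x_0))$, so that the cell formula above can be realised as a double limit of cell formulas for $f_n$. A rescaling argument in the spirit of Step 1 of the proof of Theorem \ref{thm: LSC iff BD-ell + bounded}, applied to competitors in the minimum problems defining $\mathbf{m}_{\mathcal{F}_n}$, shows that each such normalised cell formula for $f_n$ inherits the bounds $\alpha \le \cdot \le \beta$, the symmetry \eqref{eq: symmetry}, the translational invariance \eqref{eq: restriction}, and the continuity modulus \eqref{eq: continuity2} directly from $f_n$—with the uniformity of $\sigma$ in $n$ being essential so that the limiting modulus of $f$ is again $\sigma$. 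Passing $n \to \infty$ first and then $\eps \to 0$ in the sandwich then transfers all these properties to $f$, completing the identification.
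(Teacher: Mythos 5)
Your proposal is correct in outline, but it takes a substantively different route from the paper's proof. The paper simply verifies that the functionals $\mathcal{F}_n$ satisfy the hypotheses (H$_1$)--(H$_5$) of \cite[Theorem 2.3]{FM} (measurability gives (H$_1$), the integral form gives (H$_3$), the uniform bounds $\alpha\le\inf f_n\le\sup f_n\le\beta$ give (H$_4$), and \eqref{eq: continuity2} gives (H$_5$)) and then cites that theorem wholesale, both for the $\Gamma$-compactness and for the integral representation under \eqref{eq: condition-new-new}; the structural properties of the limiting density then follow because $\bar{\mathcal{F}}$ inherits (H$_1$)--(H$_5$) and the cell formula \cite[Eq.\ (2.7)]{FM}. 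Your proposal, by contrast, essentially re-derives the contents of \cite[Theorem 2.3]{FM} from scratch: you unpack the standard $\Gamma$-compactness argument, the inner regularity via the fundamental estimate, the verification of the global-method hypotheses (measure property, locality, growth, translational invariance), and the identification of the density via a normalised cell formula together with the sandwich \eqref{eq: condition-new-new}. This is a legitimate and more self-contained path, and your observation that the constants in the fundamental estimate depend on $f_n$ only through $\alpha$, $\beta$, $\sigma$ (hence uniformly in $n$) is exactly the point that makes the re-derivation work for a sequence of functionals. The trade-off is that what the paper dispatches in a short paragraph becomes, in your version, a reconstruction of the entire global relaxation machinery; if you want to follow the paper's economy you should cite \cite[Theorem 2.3]{FM} directly after the hypothesis check rather than re-proving it.
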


\begin{proof}
We briefly explain how the result follows from \cite[Theorem 2.3]{FM}. The functionals $\mathcal{F}_n$ satisfy (H$_1$) since $f_n$ are measurable and (H$_3$) holds due to the integral representation \eqref{eq: basic problem}.  Property (H$_4$) follows from $\alpha \le \inf f_n \le \sup f_n \le \beta$.   Property \eqref{eq: continuity2} implies (H$_5$). 

Then, by \cite[Theorem 2.3]{FM} we find a limiting functional $\bar{\mathcal{F}}$ satisfying {\rm (${\rm H_1}$)}--{\rm (${\rm H_5}$)} such that \eqref{eq: gamma-lim} holds and $\bar{\mathcal{F}}$ admits an integral representation. It remains to show that the corresponding density $f$ satisfies $\alpha \le \inf f \le \sup f \le \beta$, \eqref{eq: symmetry},   \eqref{eq: restriction}, and \eqref{eq: continuity2}. In fact,  \eqref{eq: symmetry} and \eqref{eq: restriction} are obvious by \cite[Equation (2.7)]{FM} and {\rm (${\rm H_4}$)} implies $\alpha \le \inf f \le \sup f \le \beta$. Finally,  by {\rm (${\rm H_5}$)} we get \eqref{eq: continuity2}.
\end{proof}

\begin{proof}[Proof of Theorem \ref{thm: don't worry be happy}]
We divide the proof into three steps. 

\noindent \emph{Step 1: Integral representation.} In this step, we prove that (\ref{eq: integral representation}) holds true. We start by applying Lemma \ref{th: gamma} on the constant sequence of functionals $\mathcal{F}_n = \mathcal{F}$ for all $n \in \N$. (Note that Lemma \ref{th: gamma} is applicable as $f$ is bounded, $\inf f >0$, as well as $f$ satisfies  \eqref{eq: symmetry}, \eqref{eq: restriction}, and \eqref{eq: continuity2}.) As $\inf f >0$, convergence of measure is equivalent to convergence in $PR(\Omega)$   for sequences of bounded energy. Therefore, we get that the functional $\bar{\mathcal{F}}$ defined in \eqref{relaxed functional} coincides with the $\Gamma$-limit given in \eqref{eq: gamma-lim}, cf.\ \cite[Remark 4.5]{DalMaso:93}.  Now, to show that $\bar{\mathcal{F}}$ admits an integral representation, it remains to check that \eqref{eq: condition-new-new} holds true. To this end, it suffices to prove that 
\begin{align}\label{eq: our (2.8)}
\mathbf{m}_{\mathcal{F}}(u,A)= \mathbf{m}_{\bar{\mathcal{F}}}(u,A) \quad\quad\forall\, u\in PR(\Omega),\quad\quad \forall\, A\in\mathcal{A}_0(\Omega).
\end{align}
Observe that  inequality ``$\geq$'' follows directly by definition of $\mathbf{m}_{\mathcal{F}}$ and by the fact that $\bar{\mathcal{F}} \le \mathcal{F}$, see \eqref{relaxed functional}. The other inequality is a  direct consequence of \cite[Lemma 6.3]{FM}.  (This result essentially relies on the fundamental estimate Lemma \ref{lemma: fundamental estimate2-new}.)  Thus, (\ref{eq: our (2.8)}) holds true. Then, Lemma \ref{th: gamma} yields that $\bar{\mathcal{F}}$ admits an integral representation. The corresponding integrand is denoted by $\varphi$ in the following.  From Lemma \ref{th: gamma} we also get that $\varphi$ is bounded and satisfies \eqref{eq: symmetry},  \eqref{eq: restriction}, and  \eqref{eq: continuity2}.

\noindent \emph{Step 2: $BD$-ellipticity and representation (\ref{eq: varphi = Ef}).} As  $\Gamma$-limit, the functional $\bar{\mathcal{F}}$ is lower semicontinuous on $PR(\Omega)$. Since $\varphi$ is also bounded and satisfies \eqref{eq: symmetry} by Step 1, we get that $\varphi$ is $BD$-elliptic, see  Remark \ref{rem: just bounded}. Now, since  $\varphi$ is $BD$-elliptic,  \eqref{eq: def-BD} implies
$$\varphi(i,j,\nu) = \mathbf{m}_{\bar{\mathcal{F}}}\big(u_{i,j,\nu},Q^\nu_1 \big) \quad \quad \text{for all } (i,j,\nu)\in \R^d\times \R^d\times \mathbb{S}^{d-1},\ i \neq j.  $$ 
This along with  \eqref{eq: our (2.8)} yields (\ref{eq: varphi = Ef}) and concludes Step 2 of the proof.

\noindent \emph{Step 3: Further properties of $\varphi$.}  To conclude the proof, it remains to show that $\varphi$ is continuous and that it is the greatest $BD$-elliptic function below $f$. In view of \eqref{eq: restriction}--\eqref{eq: continuity2}, for the continuity it suffices to check that $\nu \mapsto \varphi(i,j,\nu)$ is continuous  for fixed $i,j \in \R^d$.  As $\varphi$ is $BD$-elliptic, the mapping $\nu \mapsto \varphi(i,j,\nu)$ is convex, see Subsection \ref{sec: def}. In particular, the mapping is also continuous, as desired. 
 
Finally, we show that $\varphi$ is the greatest $BD$-elliptic function with $\varphi\le f$. First,  $\varphi\le f$ clearly follows from (\ref{eq: varphi = Ef}) by using $u_{i,j,\nu}$ as a competitor. On the other hand, let $\bar{\varphi}$ be another $BD$-elliptic function satisfying $\bar{\varphi} \le f$. Let us prove that $\bar{\varphi}\le \varphi$.  Denoting by $\mathcal{F}_{\bar{\varphi}}$ the functional in \eqref{eq: basic problem} with density $\bar{\varphi}$, we find 
$$\bar{\varphi}(i,j,\nu) =  \mathbf{m}_{\mathcal{F}_{\bar{\varphi}}}\big(u_{i,j,\nu},Q^\nu_1 \big) \quad \quad \text{for all }  (i,j,\nu) \in \R^d \times \R^d \times \mathbb{S}^{d-1}, \ i \neq j$$
 since $\bar{\varphi}$ is $BD$-elliptic. Then, $\bar{\varphi} \le f$ and  \eqref{eq: varphi = Ef} imply
$$\bar{\varphi}(i,j,\nu) =  \mathbf{m}_{\mathcal{F}_{\bar{\varphi}}}\big(u_{i,j,\nu},Q^\nu_1 \big) \le \mathbf{m}_{\mathcal{F}}\big(u_{i,j,\nu},Q^\nu_1 \big) = \varphi(i,j,\nu).$$
This shows indeed that $\varphi$ is the greatest $BD$-elliptic function with $\varphi\le f$  on $\lbrace i \neq j \rbrace$. (The values on the diagonal $\lbrace i= j \rbrace$ are irrelevant.)  
\end{proof}

%
%
%
%
%
%
%
%
%

\section{A sufficient condition for lower semicontinuity: symmetric joint convexity}\label{sec: symmetric joint convexity}

Whereas Theorem \ref{thm: LSC iff BD-ell + bounded} provides a characterization of lower semicontinuity in $PR(\Omega)$ for functionals defined in \eqref{eq: basic problem}, the drawback is that it is in general a difficult task to check whether an integrand $f$  is $BD$-elliptic or not. Therefore, we seek for a sufficient condition that (a) implies $BD$-ellipticity and  lower semicontinuity, as well as  that (b) can be checked in practice for concrete examples. For $BV$-ellipticity, this role is played by \emph{jointly convex functions}. In the setting of piecewise rigid functions, we introduce a corresponding notion that we call \emph{symmetric joint  convexity}. In this section, we prove sufficiency for lower semicontinuity. We defer important examples of symmetric jointly convex functions to Section  \ref{sec: examples} below.

We recall that a vector field $g \in C(\R^d;\R^d)$ is \emph{conservative} if there exists a potential $G \in C^1(\R^d)$ such that $\nabla G =g$. 
\begin{definition}[Symmetric joint convexity]\label{def: symm-conv}
We say that $f\colon\R^d\times \R^d\times \R^d\rightarrow [0,+\infty)$ is a \emph{symmetric jointly convex function} if 
\begin{align}\label{symmetric joint convexity}
f(i,j,\nu)=\sup_{h\in\mathbb{N}}\left\langle g_h(i)-g_h(j),\, \nu\right\rangle \quad \quad \text{for all } (i,j,\nu) \in \R^d\times \R^d\times \R^d \quad  \text{with } i \neq j,
\end{align}
where $g_h\colon \R^d\to \R^d$ is a  uniformly continuous, bounded, and  conservative vector field for every $h\in\mathbb{N}$.  
\end{definition}
 The notion is related to the class of \emph{jointly convex functions}, see \cite[Definition 5.17]{Ambrosio-Fusco-Pallara:2000}, which constitutes an important class of $BV$-elliptic functions. The essential difference in our definition is that we require the vector fields to be \emph{conservative}. This additional property is  instrumental to deal with functions   for which only the symmetric part of the gradient can be controlled.  We point out  that the definition directly implies that $f$ as in \eqref{symmetric joint convexity} satisfies  \eqref{eq: symmetry}.   Before we proceed with the main statement of this section, we remark that the functions $(g_h)_h$ can be approximated by more regular functions.
 
 \begin{remark}\label{rem: symm-conv}
We will sometimes approximate functions of the kind \eqref{symmetric joint convexity} with the supremum of more regular fields, which belong to $ C^1(\R^d;\R^d) \cap W^{1,\infty}(\R^d;\R^d)$. In fact, each   uniformly continuous, bounded, and  conservative vector field can be approximated uniformly by a conservative vector field  in $C^1(\R^d;\R^d) \cap W^{1,\infty}(\R^d;\R^d)$. (This follows by approximating the corresponding potential.) Therefore, for a given $f$ as in  \eqref{symmetric joint convexity} and each $\varepsilon >0$ we can find a sequence $(g^{\varepsilon}_h)_h \subset  C^1(\R^d;\R^d) \cap W^{1,\infty}(\R^d;\R^d)$ such that
\begin{equation}\label{eq: approximation}
f(i,j,\nu)-\varepsilon \le \sup_{h\in\mathbb{N}}\left\langle g^{\varepsilon}_h(i)-g^{\varepsilon}_h(j),\, \nu\right\rangle\le f(i,j,\nu)+\varepsilon  \quad \text{for all } (i,j,\nu) \in \R^d\times \R^d\times \mathbb{S}^{d-1} \text{ with } i \neq j. 
\end{equation}
Let us also recall that conservative $C^1$-vector fields are \emph{curl-free}, where the curl is defined by ${\rm curl}(g) =  (\partial_i g_j - \partial_j g_i)_{i,j=1,\ldots,d}$  for $g \in C^1(\R^d;\R^d)$.  
\end{remark}

\begin{remark}\label{rem: take sup}
It follows from the definition that the class of symmetric jointly convex functions is closed under finite sum and countable supremum, provided  the latter is pointwise finite. 
\end{remark}

The main result of this section addresses  the relation of symmetric joint convexity and $BD$-ellipticity, as well as lower semicontinuity of the corresponding  functionals.

\begin{theorem}[Symmetric joint convexity implies $BD$-ellipticity]\label{thm: LSC symjointconvex}
Any symmetric jointly convex function $f\colon\R^d\times \R^d\times \R^d \rightarrow [0,+\infty)$ is $BD$-elliptic,   and the corresponding functional $\mathcal{F}$ defined in (\ref{eq: basic problem}) is lower semicontinuous on $PR(\Omega)$. 
\end{theorem}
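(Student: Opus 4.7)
My strategy is to first prove $BD$-ellipticity directly via an integration-by-parts identity on $Q_1^\nu$, exploiting the conservativity of the $g_h$ together with the skew-symmetric structure of the gradients of $PR$ functions, and then to deduce lower semicontinuity by a truncation argument combined with Corollary \ref{cor: f=sup fh is BD and LSC}.

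For the $BD$-ellipticity, fix $(i,j,\nu)\in\R^d\times\R^d\times \mathbb{S}^{d-1}$ with $i\neq j$ and a competitor $v\in PR(Q_1^\nu)$ with $\{v\neq u_{i,j,\nu}\}\subset\subset Q_1^\nu$. By Remark \ref{rem: symm-conv}, for every $\varepsilon>0$ I may replace the $g_h$ by conservative fields $g^{\varepsilon}_h \in C^1(\R^d;\R^d)\cap W^{1,\infty}(\R^d;\R^d)$ satisfying \eqref{eq: approximation}. The key observation is that, for any such $g = g^{\varepsilon}_h$, the composition $g(v)\in SBV(Q_1^\nu;\R^d)\cap L^\infty(Q_1^\nu;\R^d)$ has distributional divergence with \emph{vanishing} bulk part: by the $BV$ chain rule applied piecewise on the Caccioppoli partition of $v$,
\[
\mathrm{div}(g(v)) = Dg(v):\nabla v \, \mathcal{L}^d + \big[g(v^+)-g(v^-)\big]\cdot \nu_v \, \mathcal{H}^{d-1}\lfloor J_v,
\]
and on each piece $P_k$ one has $\nabla v = Q_k \in \R^{d\times d}_{\rm skew}$, while $Dg$ is symmetric (being the Hessian of the potential of $g$), so $Dg(v):Q_k=0$ pointwise.

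The Gauss--Green formula on $Q_1^\nu$, together with the fact that the inner trace of $g(v)$ on $\partial Q_1^\nu$ equals $g(u_{i,j,\nu})$, then yields
\[
\int_{J_v\cap Q_1^\nu} \big[g(v^+)-g(v^-)\big]\cdot \nu_v\, \mathrm{d}\mathcal{H}^{d-1} = \int_{\partial Q_1^\nu} g(u_{i,j,\nu})\cdot n_{Q_1^\nu}\, \mathrm{d}\mathcal{H}^{d-1} = \langle g(i)-g(j),\nu\rangle,
\]
where the last equality follows because the two faces of $Q_1^\nu$ with outer normals $\pm\nu$ contribute $\pm\langle g(i),\nu\rangle$ and $\mp\langle g(j),\nu\rangle$ (each such face has unit $\mathcal{H}^{d-1}$-measure), whereas on each pair of opposite side faces the two halves $\{u_{i,j,\nu}=i\}$ and $\{u_{i,j,\nu}=j\}$ produce contributions $\pm\tfrac12 \langle g(i)+g(j),n\rangle$ that cancel between the opposite outer normals. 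Choosing $g=g^{\varepsilon}_h$, estimating the integrand on the left pointwise via \eqref{eq: approximation}, taking the supremum over $h\in\N$, and letting $\varepsilon\to 0$, I obtain
\[
f(i,j,\nu) \leq \int_{J_v} f(v^+,v^-,\nu_v)\, \mathrm{d}\mathcal{H}^{d-1},
\]
which is exactly the $BD$-ellipticity inequality.

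For the lower semicontinuity, set $f_N(i,j,\nu):=\max\{0,\; \max_{1\leq h\leq N} \langle g_h(i)-g_h(j),\nu\rangle\}$. Each $f_N$ is itself symmetric jointly convex (take the fields $g_1,\ldots,g_N$ together with the zero field, padded by zeros), hence $BD$-elliptic by the previous step. Moreover $f_N$ is bounded, continuous, and fulfills \eqref{eq: symmetry} and \eqref{eq: continuity} as a finite maximum of bounded, uniformly continuous functions of $(i,j)$. Theorem \ref{thm: LSC iff BD-ell + bounded} therefore yields lower semicontinuity on $PR(\Omega)$ of the functional $\mathcal{F}_N$ with density $f_N$, and since $f=\sup_N f_N$ is finite-valued by hypothesis, Corollary \ref{cor: f=sup fh is BD and LSC} concludes that $\mathcal{F}$ is lower semicontinuous on $PR(\Omega)$. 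The main subtle step I expect is the rigorous justification of the chain rule and Gauss--Green formula for $g(v)$ on $Q_1^\nu$: since $v$ need not be globally $BV$, one has to work locally, using that a bounded Lipschitz $g$ maps $PR$ into $SBV\cap L^\infty$ with the chain rule valid on each piece of the Caccioppoli partition, and that the bulk term is suppressed exactly by the symmetric/skew-symmetric cancellation above.
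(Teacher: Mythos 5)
Your strategy is essentially the paper's: exploit the conservativity of the $g_h$ so that the bulk part of the distributional divergence of $g(v)$ cancels against the skew-symmetric $\nabla v$, compare $v$ with $u_{i,j,\nu}$ via compact support of the difference, and then pass from bounded truncations $f_N$ to $f$ via Corollary \ref{cor: f=sup fh is BD and LSC}. The truncation $f_N$ and its verification are identical to the paper's Step~2. However, there is a genuine gap in your $BD$-ellipticity step that you flag but do not close.

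The problematic claim is that $g(v)\in SBV(Q_1^\nu;\R^d)$ for an \emph{arbitrary} $v\in PR(Q_1^\nu)$ and $g$ bounded, conservative, $C^1\cap W^{1,\infty}$. This is false in general. Although $\mathrm{tr}\big(\nabla g(v)\nabla v\big)=0$ pointwise by the symmetric/skew cancellation, the full matrix $\nabla g(v)\nabla v$ need not be small in Frobenius norm, and its $L^1(Q_1^\nu)$ norm need not be finite: on the piece $P_k$ one has $|\nabla g(v)\nabla v|\lesssim \|\nabla g\|_\infty\,\|Q_k\|$, and the sum $\sum_k \|Q_k\|\,\mathcal{L}^d(P_k)$ can diverge (e.g.\ disks with $r_k\sim 1/(k\log^2 k)$, which keep $\sum_k \mathcal{H}^{d-1}(\partial^*P_k)<\infty$, and $\|Q_k\|\sim k^2$). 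So $D(g(v))$ need not be a finite measure, the $BV$ chain rule does not apply, and ``working locally on each piece'' does not repair this because the obstruction is global integrability, not local regularity. The paper's proof circumvents exactly this issue by first replacing $v$ by a finite-partition competitor $v_K=\sum_{k=1}^K a_{Q_k,b_k}\chi_{P_k}$, which is genuinely in $BV(Q_1^\nu;\R^d)\cap L^\infty$, applying the $BV$ chain rule and the trace identity to $v_K$, and then controlling the error $\int_{J_v}f-\int_{J_{v_K}}f$ by $\eps\|f\|_\infty$ using the boundedness of the (truncated) integrand. To make your argument rigorous you would need to insert this finite-partition truncation before invoking the chain rule, or alternatively argue by a further approximation that $\mathrm{div}(g(v))$ is nonetheless a finite measure concentrated on $J_v$ — a step that requires additional justification you have not provided.
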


\begin{proof}
 We divide the proof into two steps: first, we prove the statement if $f$ is bounded,  continuous,  and satisfies \eqref{eq: continuity}, then we come to the general case.
 
\noindent {\it Step 1.} Assume, in addition, that $f$ is bounded,  continuous  on $\mathbb{R}^d\times \mathbb{R}^d \times \mathbb{S}^{d-1}$ and satisfies \eqref{eq: continuity}.  Fix a triple $(i,j,\nu)\in \R^d\times \R^d \times \mathbb{S}^{d-1}$,  $i\neq j$.  Let $u \in PR(Q^\nu_1)$ be such that $\{u\neq u_{i,j,\nu}\}\subset\!\subset Q^\nu_1$, where $u_{i,j,\nu}$ is the function  defined in (\ref{u_0}). In view of \eqref{eq: PR-def}, we can write $u  =  \sum\nolimits_{k\in \N} a_{Q_k,b_k} \chi_{P_k}$, where $P_1 = \lbrace u=i\rbrace$ and $P_2 = \lbrace u =j\rbrace$.  Fix any $\eps>0$ and define $v = \sum\nolimits_{k=1}^K a_{Q_k,b_k} \chi_{P_k}$ such that  $\mathcal{H}^{d-1}(\bigcup_{k \ge K+1} \partial^* P_k) \le\eps $ and $\{v\neq u_{i,j,\nu}\}\subset\!\subset Q^\nu_1$. Then, we clearly have $v \in BV(Q^\nu_1;\R^d) \cap L^\infty(Q^\nu_1;\R^d)$, and 
\begin{align}\label{eq: eps-bound}
\int_{J_v}f(v^+,v^-,\nu_v)\,{\rm d}\mathcal{H}^{d-1} \le  \int_{J_u}f(u^+,u^-,\nu_u)\,{\rm d}\mathcal{H}^{d-1}  + \eps\Vert f\Vert_\infty. 
\end{align}
Now, it suffices to prove
\begin{align}\label{eq: v proof}
\int_{J_v}f(v^+,v^-,\nu_v)\,{\rm d}\mathcal{H}^{d-1} \geq f(i,j,\nu).
\end{align}
In fact, \eqref{eq: eps-bound}  and the arbitrariness of $\eps$ then show that $f$ is $BD$-elliptic.

To see \eqref{eq: v proof}, we first fix  $g\in C^1(\R^d;\R^d)\cap W^{1,\infty}(\R^d;\R^d)$, and use the chain rule in $BV$ (see \cite[Theorem 3.96]{Ambrosio-Fusco-Pallara:2000}) to obtain $g(v) \in BV(Q^\nu_1;\R^d)$ with 
\begin{align}\label{eq: chain-rule}
Dg(v)= \nabla g(v)\nabla v\mathcal{L}^d+\left(g(v^+)-g(v^-)  \right)\otimes \nu_v\mathcal{H}^{d-1}\mres J_v,
\end{align} 
where $Dg(v) = (D_k g_l(v))_{k,l=1,\ldots,d}$ denotes the  distributional  derivative.  Since $g(v)-g(u_{i,j,\nu})$ has compact support in $Q^\nu_1$, there holds $Dg(v)(Q^\nu_1)=Dg(u_{i,j,\nu})(Q^\nu_1)$. In particular, 
\begin{align}\label{eq: traces}
\mathrm{tr}\big(Dg(v)(Q^\nu_1) \big)=\mathrm{tr}\big(Dg(u_{i,j,\nu})(Q^\nu_1) \big),
\end{align}
 where ``tr'' stands for the trace, i.e., $\mathrm{tr}\left(Dg(v)(Q^\nu_1) \right) = \sum_{k=1}^d D_{k}g_k(v)(Q^\nu_1)$.  Now assume that $g$ is also conservative, i.e., curl-free. We get by \eqref{eq: chain-rule} that 
\begin{align*}
\mathrm{tr}\big(Dg(v)(Q^\nu_1) \big)
&= \int_{Q^\nu_1}\nabla g(v):(\nabla v)^T\, {\rm d}\mathcal{L}^d+ \int_{J_v}\left\langle g(v^+)-g(v^-), \nu_v  \right\rangle\, {\rm d}\mathcal{H}^{d-1}.
\end{align*}
Since $g$ is curl-free and thus $\nabla g(v)$ is a symmetric matrix, whereas $\nabla v$ is a skew symmetric matrix pointwise a.e., we then get
$$\mathrm{tr}\big(Dg(v)(Q^\nu_1) \big) =  \int_{J_v}\left\langle g(v^+)-g(v^-), \nu_v  \right\rangle\, d\mathcal{H}^{d-1}.$$
In a similar fashion, we obtain $\mathrm{tr}\left(Dg(u_{i,j,\nu})(Q^\nu_1) \right) = \left\langle g(i)-g(j), \nu  \right\rangle$. Therefore,  by \eqref{eq: traces} we derive 
\begin{align*}
\int_{J_v}\left\langle g(v^+)-g(v^-), \nu_v  \right\rangle\, {\rm d}\mathcal{H}^{d-1}= \mathrm{tr}\big(Dg(v)(Q^\nu_1) \big)= \mathrm{tr}\big(Dg(u_{i,j,\nu})(Q^\nu_1) \big) = \left\langle g(i)-g(j), \nu  \right\rangle.
\end{align*}
Let $g^{\varepsilon}_h\in C^1(\R^d;\R^d) \cap W^{1,\infty}(\R^d;\R^d)$ be  curl-free for every $h\in\mathbb{N}$ as in Remark  \ref{rem: symm-conv}. Then, taking the supremum   on both sides of the above relation for $g=g^{\varepsilon}_h$ and using \eqref{eq: approximation}  we get 
\begin{align*}
\begin{split}
\int_{J_v}f(v^+,v^-,\nu_v)\,d\mathcal{H}^{d-1}+\varepsilon \mathcal{H}^{d-1}(J_v)  &\geq \sup_{h\in\mathbb{N}}\int_{J_v}\left\langle g^{\varepsilon}_h(v^+)-g^{\varepsilon}_h(v^-), \nu_v  \right\rangle\, d\mathcal{H}^{d-1}\\
&=\sup_{h\in\mathbb{N}}\left\langle g^{\varepsilon}_h(i)-g^{\varepsilon}_h(j), \nu  \right\rangle\geq  f(i,j,\nu)-\varepsilon.
\end{split}
\end{align*}
By the arbitrariness of $\varepsilon >0$, this shows \eqref{eq: v proof}. By using also Theorem \ref{thm: LSC iff BD-ell + bounded} we get that  $\mathcal{F}$ defined in \eqref{eq: basic problem} is lower semicontinuous in $PR(\Omega)$. This  concludes the proof of Step 1.

\noindent {\it Step 2.}  We now address the general case.  For each $M \in \mathbb{N}$, set $\hat g_h= g_h$ if $h \le M$, and $\hat g_h=0$  else.  Consider $ f_M(i, j, \nu):= \sup_{h\in\mathbb{N}}\left\langle \hat g_h(i)-\hat g_h(j),\,  \nu\right\rangle$ (so that in particular, $f_M \ge 0$). Clearly, Step $1$ can be applied to the functions $f_M$ as each function $g_h$ in \eqref{symmetric joint convexity}  is bounded and   uniformly continuous. Since $f=\sup_M f_M$, the conclusion follows from Corollary \ref{cor: f=sup fh is BD and LSC}.  
\end{proof}


%
%
%
%
%
%
%
%
%
%

We close this section by providing a prototypical class of symmetric jointly convex functions.

\begin{example}[Prototype of symmetric jointly convex functions]\label{ex: prototyp}
Given any orthonormal basis  $\{ \xi_1,\ldots,\xi_d \}$  of $\R^d$ and bounded, uniformly continuous functions $h_k \in C(\R)$, $k=1,\ldots,d$, consider the function 
$$
g(w):= \sum_{k=1}^d h_k \big(\langle w, \xi_k\rangle\big) \, \xi_k \ \ \ \ \text{for all } w \in \R^d.
$$
Then, clearly $g\in C(\R^d;\R^d)$ is bounded, uniformly continuous, and conservative with potential
$$G(w) = \sum_{k=1}^d H_k \big(\langle w, \xi_k\rangle\big)\ \ \ \ \text{for all } w \in \R^d,$$
where $H_k$ denotes a primitive of $h_k$. Then, functions $f$ as in \eqref{symmetric joint convexity} with functions $g_h$ of the above form are symmetric jointly convex. We will exploit this several times in the examples in Section \ref{sec: examples}.

\end{example}

\section{Examples of $BD$-elliptic functionals}\label{sec: examples}

In this section, we present various examples of functions that are $BD$-elliptic. We start with some classes of symmetric jointly convex functions, including in particular the density $(i,j,\nu) \mapsto |i-j| |\nu|$. Afterwards, we consider so-called \emph{biconvex functions}, and then functions which either only depend on the normal or have a ``mild'' dependence on  the traces  $i$ and $j$. Finally, we provide examples of functions that are $BV$-elliptic but  not $BD$-elliptic.

\subsection{Subadditive isotropic integrands}\label{sec: joint2}

In this subsection, we show that certain subadditive isotropic integrands are $BD$-elliptic. This result constitutes one of our main results since the  class of considered  functions contains  in particular   the density $(i,j,\nu) \mapsto |i-j| |\nu|$.

\begin{theorem}[Subadditive, isotropic integrands]\label{example: g subadditive}
Let $g:[0,+\infty)\rightarrow [0,+\infty)$ be an increasing function satisfying
\begin{equation}\label{eq: mild subadditivity}
\frac{g(t)}{t} \quad \mbox{is nonincreasing on }(0, +\infty)\,.
\end{equation}
Then, $f\colon\R^d\times\R^d\times\R^d\rightarrow [0,+\infty)$ defined as
\begin{align}\label{eq: g,nu}
f(i,j,\nu):= g(|i-j|)|\nu|
\end{align}
is symmetric jointly convex and thus $BD$-elliptic. In particular, the function 
\begin{align*}
(i,j,\nu)\mapsto|i-j||\nu|
\end{align*}
is symmetric jointly convex.
\end{theorem}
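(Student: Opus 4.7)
The strategy is to show that $f(i,j,\nu) = g(|i-j|)|\nu|$ is symmetric jointly convex; $BD$-ellipticity (and lower semicontinuity) then follow at once from Theorem \ref{thm: LSC symjointconvex}. Everything rests on Lemma \ref{lem: Bu=v}, which expresses
$$|i-j||\nu| = \sup \bigl\{\langle B(i-j), \nu\rangle : B \in \R^{d\times d}_{\mathrm{sym}},\ \|B\|\le 1\bigr\}.$$

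For each symmetric contraction with spectral decomposition $B = \sum_k \lambda_k e_k e_k^{\top}$, the field $x \mapsto Bx$ is conservative (potential $\tfrac12\langle Bx, x\rangle$) but unbounded. I shall truncate it eigencomponent by eigencomponent: with $y \in \R^d$, $M > 0$, and $\tau_M\colon \R \to \R$ an odd, $1$-Lipschitz function satisfying $\tau_M(s) = s$ for $|s|\le M$ and $|\tau_M|\le M$, set
$$V_{B,y,M}(x) := \sum_k \lambda_k\,\tau_M(\langle x-y, e_k\rangle)\,e_k.$$
By Example \ref{ex: prototyp} this is conservative (potential $\sum_k \lambda_k T_M(\langle x-y, e_k\rangle)$, $T_M$ an antiderivative of $\tau_M$), bounded by $\sqrt{d}\,\|B\|\,M$, and uniformly continuous. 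The key point is that for any fixed $(i,j,\nu)$, choosing $y = (i+j)/2$ and $M \geq |i-j|/2$ makes the truncation inactive at $\pm(i-j)/2$, whence oddness of $\tau_M$ gives $V_{B,y,M}(i) - V_{B,y,M}(j) = B(i-j)$ exactly. Letting $(B,y,M)$ range over a countable dense subfamily of the parameter space and invoking the Lemma, the countable sup reproduces $|i-j||\nu|$ pointwise, so the density $|i-j||\nu|$ is symmetric jointly convex.

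To pass to general $g$, I exploit that the hypothesis $g(t)/t$ nonincreasing (together with $g$ increasing and $g(0)=0$) yields the representation
$$g(t) = \sup_{s > 0} \min\!\Bigl(\tfrac{g(s)}{s}\,t,\ g(s)\Bigr), \qquad t \ge 0,$$
verified by case analysis: the $\min$ equals $g(s)t/s \le g(t)$ for $t \le s$ (using $g(s)/s \le g(t)/t$), equals $g(s)\le g(t)$ for $t \ge s$, and attains $g(t)$ at $s=t$. Restricting to rational $s$ gives
$$g(|i-j|)|\nu| = \sup_{s \in \Q^+}\, \min\!\bigl(\tfrac{g(s)}{s}|i-j|,\ g(s)\bigr)\, |\nu|.$$
For each fixed $s$, I claim the truncated integrand $\min(\alpha|i-j|, \beta)|\nu|$, with $\alpha = g(s)/s$ and $\beta = g(s)$, is symmetric jointly convex. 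This follows by the same construction, now restricting to $\|B\| \le \alpha$ and to truncation parameters $M$ with $\sqrt{d}\,\alpha M \le \beta/2$, so each $V_{B,y,M}$ is simultaneously $\alpha$-Lipschitz and bounded by $\beta/2$, yielding the upper bound $|V_{B,y,M}(i) - V_{B,y,M}(j)| \le \min(\alpha|i-j|, \beta)$. In the saturation regime $|i-j| \gtrsim \beta/\alpha$, one complements this family with rank-one saturating fields $x \mapsto \mu\,\phi(\lambda\langle x-y, e\rangle)\,e$ (with $\phi$ a smooth, odd, saturating profile such as $\tanh$), which are bounded and conservative and realize the directional extremal value $\beta|\nu|$ after optimizing over $e$ and $y$. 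By Remark \ref{rem: take sup}, the countable supremum in $s$ preserves symmetric joint convexity, so $f$ is symmetric jointly convex as claimed.

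The main obstacle is verifying that the supremum in the previous step actually attains $\min(\alpha|i-j|, \beta)|\nu|$ for \emph{all} triples $(i,j,\nu)$, particularly when $i-j$ is nearly orthogonal to $\nu$: rank-one truncated fields alone miss a factor depending on the angle between $i-j$ and $\nu$, so one must combine one-dimensional truncations along several orthogonal directions, coupling the eigenvalue constraint $\|B\|\le \alpha$ with the $L^\infty$ constraint $\sqrt{d}\,\alpha M \le \beta/2$ and suitably chosen shifts $y$ near $(i+j)/2$ — in essence, replicating Lemma \ref{lem: Bu=v} inside the truncated class.
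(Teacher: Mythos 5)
Your high-level strategy matches the paper's: invoke Lemma \ref{lem: Bu=v} to represent $|i-j||\nu|$ as a supremum over symmetric contractions, truncate the resulting linear fields to make them bounded, and reduce general $g$ to a countable supremum of truncated linear integrands $\min(\alpha|i-j|,\beta)|\nu|$ (your formula $g(t)=\sup_{s>0}\min(g(s)t/s,\,g(s))$ is precisely the paper's choice $a_t=g(t)/t$, $M_t=g(t)$). Your direct argument for the special density $|i-j||\nu|$ (choosing $y=(i+j)/2$ and $M\ge|i-j|/2$ so the truncation is inactive) is correct and a small simplification, but the theorem's general $g$ forces you through the truncated case, and there the proof has a genuine gap which you yourself flag in your last paragraph.

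The gap is exactly where you say it is. With a single cutoff level $M$ applied identically to every eigencomponent, the fields $V_{B,y,M}(x)=\sum_k\lambda_k\tau_M(\langle x-y,e_k\rangle)e_k$ subject to $\sqrt{d}\,\alpha M\le\beta/2$ cannot realize $\min(\alpha|i-j|,\beta)|\nu|$ whenever $\beta/(\sqrt d\,\alpha)<|i-j|\le\beta/\alpha$: the truncation is forced to be active even though the target value is still $\alpha|i-j||\nu|$. Your proposed fix (rank-one saturating profiles) does not resolve this, because when $\nu$ and $i-j$ are far from aligned the sup over directions $e$ of $\beta\,|\langle\nu,e\rangle|\cdot\mathrm{(profile)}$ loses a projection factor, which is why you conclude one must "in essence replicate Lemma \ref{lem: Bu=v} inside the truncated class." The paper's Lemma \ref{lemma: special functions} does precisely that, by a device you did not find: it introduces an extra parameter $\mu\in\mathbb{S}^{d-1}$ and sets
\[
g_{B,\mu,c}(w)=\sum_{k=1}^d\lambda_k\,\mu_k\,\eta_M\!\left(a\Bigl(\tfrac{\langle w,\xi_k\rangle}{\mu_k}-c_k\Bigr)\right)\xi_k,
\]
i.e.\ each eigencomponent is \emph{anisotropically} dilated by $1/\mu_k$ inside the truncation $\eta_M=\min\{|\cdot|,M\}$ and compensated by a factor $\mu_k$ outside. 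With the optimal choices $\mu_k=\langle i-j,\xi_k\rangle/|i-j|$ and $c_k=\langle j,\xi_k\rangle/\mu_k$, the argument of $\eta_M$ becomes $a|i-j|$ \emph{in every active eigendirection simultaneously}, so the truncation level is hit uniformly and $g_{B,\mu,c}(i)-g_{B,\mu,c}(j)=\eta_M(a|i-j|)\,B(i-j)/|i-j|$; choosing $B$ via Lemma \ref{lem: Bu=v} then yields equality. The upper bound is a short Cauchy--Schwarz computation using $\|B\|\le1$, $\mu\in\mathbb{S}^{d-1}$, and subadditivity of $\eta_M$. This $\mu$-rescaling is the missing idea; without it, the proof as you wrote it does not establish the truncated case and hence does not cover general $g$.
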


We remark  that \eqref{eq: mild subadditivity}  particularly  implies that $g$ is \emph{subadditive}.  This condition   is satisfied, for instance, if $g$ is concave (as we have $g(0)\ge 0$).   By choosing $g \equiv 1$, we re-derive the well-known fact that the Hausdorff-measure $\mathcal{H}^{d-1}$ is lower semicontinuous on $PR(\Omega)$,  see \cite[Theorem 11.3]{DM} or \cite[Lemma 3.3]{FM}. 

The notion of \emph{isotropy} refers to the fact that $f(i,j,\nu) = f(i,j,R\nu)$  and $f(i,j,\nu) = f(Ri,Rj,\nu)$ for all  proper rotations  $R \in SO(d)$. Note that this class is much smaller than the corresponding class of $BV$-elliptic functions considered in \eqref{eq: BV class} where $f$ can be anisotropic as long as $\theta$ is a pseudo-distance and  $\psi$  is  even, positively $1$-homogeneous, and convex. In fact, as we will show in Subsection~\ref{sec: counterexamples} below, for certain anisotropies it turns out that the functions in \eqref{eq: BV class} are $BV$-elliptic, but not $BD$-elliptic.

The proof of Theorem \ref{example: g subadditive} follows directly from the following lemma.

\begin{lemma}\label{lemma: special functions}
For each $M,a  \ge  0$ the function  $\theta_{M,a}\colon  \R^d \times \R^d \times \R^d   \rightarrow \R$ defined by
\begin{align*}
\theta_{M,a}(i,j,\nu):= \min\{a|i-j|,M\}\, |\nu| \quad\quad \text{ for all } (i,j,\nu) \in \R^d \times \R^d \times \R^d
\end{align*}
is  symmetric jointly convex.
\end{lemma}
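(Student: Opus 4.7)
My plan is to reduce to $a=1$ by scaling (if $(g_h)_h$ represents $\theta_{M/a, 1}$ as in Definition \ref{def: symm-conv}, then $(a\,g_h)_h$ represents $\theta_{M, a}$) and then exhibit $\min\{|i-j|, M\}\,|\nu|$ as a countable supremum of $\langle g(i) - g(j), \nu\rangle$ for conservative, uniformly continuous, bounded vector fields $g$. The basic building block comes from Example \ref{ex: prototyp}: for any orthonormal basis $\{\xi_k\}_{k=1}^d$ of $\R^d$, coefficients $\lambda_k \in [-1,1]$, shifts $m_k \in \R$, and thresholds $N_k > 0$ satisfying $\sum_{k=1}^d \lambda_k^2 N_k^2 \le M^2/4$, the truncated field
\[
g(v) := \sum_{k=1}^d \lambda_k\,\tau_{N_k}\!\bigl(\langle v, \xi_k\rangle - m_k\bigr)\,\xi_k, \qquad \tau_N(t):=\max\{-N,\min\{t,N\}\},
\]
is conservative, has Lipschitz constant $\max_k |\lambda_k| \le 1$, and by orthonormality satisfies $\Vert g\Vert_\infty \le \bigl(\sum_k \lambda_k^2 N_k^2\bigr)^{1/2} \le M/2$. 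Combining these two bounds, $|g(p)-g(q)| \le \min\{|p-q|, M\}$ for every $p, q \in \R^d$, so Cauchy--Schwarz gives the uniform upper bound $\langle g(p)-g(q), \eta\rangle \le \min\{|p-q|, M\}\,|\eta|$ for every $\eta$.

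For the matching lower bound, fix a triple $(i, j, \nu)$ with $i\neq j$ and $\nu\neq 0$, and set $w := (i-j)/|i-j|$, $\xi := \nu/|\nu|$ and $c := \min\{1, M/|i-j|\}$. The key linear-algebra input (essentially Lemma \ref{lem: Bu=v}, proved by an explicit $2\times 2$ construction in the plane $\mathrm{span}\{w, \xi\}$) provides a symmetric matrix $B \in \R^{d\times d}$ with $Bw = c\xi$ and $\Vert B\Vert = c$, so that $B(i-j) = \min\{|i-j|, M\}\,\xi$. Diagonalize $B = \sum_k \lambda_k\, \xi_k\otimes \xi_k$ in an orthonormal eigenbasis, with $|\lambda_k|\le c \le 1$, and choose $m_k := \langle i+j, \xi_k\rangle/2$, $N_k := |\langle i-j, \xi_k\rangle|/2$. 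Since $\langle i, \xi_k\rangle - m_k = -(\langle j, \xi_k\rangle - m_k) = \langle i-j, \xi_k\rangle/2$ has absolute value exactly $N_k$, no truncation is activated at $i$ or at $j$ and
\[
g(i) - g(j) = \sum_{k=1}^d \lambda_k \langle i-j, \xi_k\rangle\,\xi_k = B(i-j) = \min\{|i-j|, M\}\,\xi,
\]
which yields $\langle g(i) - g(j), \nu\rangle = \min\{|i-j|, M\}\,|\nu|$. These parameters are admissible, because $\sum_k \lambda_k^2 N_k^2 = |B(i-j)|^2/4 = \min\{|i-j|, M\}^2/4 \le M^2/4$, so $g$ belongs to our class.

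Since $\langle g(i) - g(j), \nu\rangle$ depends continuously on the parameters $(\{\xi_k\}, \{\lambda_k\}, \{m_k\}, \{N_k\})$, and the parameter space is separable (elements of $O(d)$ together with real vectors in $[-1,1]^d \times \R^d \times (0,\infty)^d$ subject to a continuous constraint), the supremum over a fixed countable dense subfamily already attains the value $\min\{|i-j|, M\}\,|\nu|$ at every triple, while every member of that subfamily satisfies the uniform upper bound from the first paragraph. The main technical obstacle is the construction of the symmetric matrix $B$ with $Bw = c\xi$ and $\Vert B\Vert = c$; once this is available, the choice $N_k = |\langle i-j, \xi_k\rangle|/2$ miraculously balances the two competing requirements, making the truncation inactive at $i, j$ while saturating the norm bound $\Vert g\Vert_\infty \le M/2$, which is precisely how the parameter $M$ in the statement is forced to appear in a representation that a priori allows arbitrarily large truncation levels.
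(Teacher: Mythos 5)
Your proof is correct and follows the same overall strategy as the paper's: build truncated, conservative vector fields in the eigenbasis of a symmetric matrix supplied by Lemma \ref{lem: Bu=v}, attain $\min\{|i-j|,M\}\,|\nu|$ by an exact parameter choice, and then pass to a countable dense subfamily. The difference lies in how the nonlinearity of the truncation is neutralized at $(i,j)$. The paper fixes $\Vert B\Vert=1$ and works with $g_{B,\mu,c}(w)=\sum_k\lambda_k\mu_k\,\eta_M\bigl(a(\langle w,\xi_k\rangle/\mu_k-c_k)\bigr)\xi_k$, where $\eta_M(t)=\min\{|t|,M\}$; tuning $\mu_k=\langle i-j,\xi_k\rangle/|i-j|$ makes $\eta_M$ be evaluated at $a|i-j|$ in every coordinate, and $M$ enters directly as the truncation level of $\eta_M$. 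You instead absorb the scaling into the matrix, $\Vert B\Vert=\min\{1,M/|i-j|\}$, use the odd truncation $\tau_{N_k}$, center at the midpoint $m_k=\langle i+j,\xi_k\rangle/2$, and set $N_k=|\langle i-j,\xi_k\rangle|/2$ so that no truncation is active at $i$ or $j$; the parameter $M$ enters through the $\ell^2$-constraint $\sum_k\lambda_k^2N_k^2\le M^2/4$. Both devices make the nonlinear part exactly linear at the evaluation points, but your version avoids dividing by $\mu_k$ (the paper handles $\mu_k=0$ by convention; you have the analogous, and equally harmless, degeneracy $N_k=0$, which you should flag — either allow $N_k\ge 0$ with $\tau_0\equiv 0$ or drop those coordinates, since they contribute nothing to $g$ or to $g(i)-g(j)$). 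Your upper bound via the Lipschitz-plus-$L^\infty$ estimate $|g(p)-g(q)|\le\min\{|p-q|,2\Vert g\Vert_\infty\}$ is slightly cleaner than the paper's explicit case split on $a|i-j|\lessgtr M$, though the content is identical.
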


\begin{proof}[Proof of Theorem \ref{example: g subadditive}]
Let $f$ as in \eqref{eq: g,nu} be given. For each $t>0$, $t \in \mathbb{Q}$, we choose $M_t  \ge  0$ and $a_t  \ge  0$ such that $M_t = g(t)$ and $a_tt = g(t)$. Then the  monotonicity of $g$ and \eqref{eq: mild subadditivity}  imply $\min\{a_t z ,M_t\} \le g(z)$ for all $z >0$ and $\min\{a_t t,M_t\}= g(t)$. This yields
$$
f(i,j,v) = \sup_{t >0, t \in \mathbb{Q}} \theta_{M_t,a_t}(i,j,\nu) \quad \quad \text{for all } (i,j,\nu) \in \R^d\times \R^d \times \R^d\quad \text{with } i \neq j.
$$
Consequently, as each $\theta_{M_t,a_t}$ is symmetric jointly convex, also $f$ is symmetric jointly convex,  see Remark \ref{rem: take sup}.   
\end{proof}

The remainder of this subsection is devoted to the proof of Lemma \ref{lemma: special functions}.  Let us start with a technical lemma that each element of $\mathbb{S}^{d-1}$ can be mapped to any other element of $\mathbb{S}^{d-1}$ by a symmetric matrix. In the following,   $\Vert\cdot\Vert$  denotes the operator norm of a matrix $B \in \R^{d \times d}$, i.e., $\Vert B \Vert := \max_{\nu \in \mathbb{S}^{d-1}}|B\nu|$. 
\begin{lemma}\label{lem: Bu=v}
For all $u,v\in \mathbb{S}^{d-1}$, there exists a symmetric matrix $B\in \mathbb{R}^{d\times d}_{\rm sym}$ such that  $\Vert B \Vert=1$ and $Bu=v$.
\end{lemma}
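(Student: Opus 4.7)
The plan is to construct $B$ explicitly via a Householder reflection. If $u=v$, the matrix $B=\mathrm{Id}$ trivially works. Otherwise, set $w:=(u-v)/|u-v|\in\mathbb{S}^{d-1}$ and take
\[
B:=\mathrm{Id}-2\,w\otimes w.
\]
This $B$ is manifestly symmetric, so the nontrivial points are to check that $Bu=v$ and $\|B\|=1$.

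First I would verify $Bu=v$ by direct computation. Using $|u-v|^2=2-2\langle u,v\rangle$, one gets
\[
\langle u,w\rangle=\frac{1-\langle u,v\rangle}{|u-v|}=\tfrac{1}{2}|u-v|,
\]
hence $2\langle u,w\rangle w=u-v$, which gives $Bu=u-(u-v)=v$. Next, to check that the operator norm equals $1$, I would observe that $B$ is a Householder reflection: expanding $B^{T}B=(\mathrm{Id}-2w\otimes w)^{2}=\mathrm{Id}-4\,w\otimes w+4\,|w|^{2}\,w\otimes w=\mathrm{Id}$ shows $B$ is orthogonal, so $|Bx|=|x|$ for every $x$ and therefore $\|B\|=1$.

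There is no real obstacle here: the statement is elementary and the above two-line construction gives everything at once. (Geometrically, $B$ is the reflection across the hyperplane orthogonal to $u-v$, which in the $2$-plane spanned by $u$ and $v$ is precisely the reflection through the bisector of $u$ and $v$; this is why it exchanges $u$ and $v$ and why symmetry arises naturally.) The only detail worth highlighting, since it is needed in the application to Lemma \ref{lem: Bu=v}, is the explicit identification $\|B\|=1$, which follows from orthogonality rather than from any delicate estimate.
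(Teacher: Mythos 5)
Your proof is correct. It does, however, take a cleaner route than the paper: you construct $B$ directly as the Householder reflection $\mathrm{Id}-2w\otimes w$ with $w=(u-v)/|u-v|$, whereas the paper first builds a $2\times 2$ reflection $\left(\begin{smallmatrix}\cos\gamma&\sin\gamma\\\sin\gamma&-\cos\gamma\end{smallmatrix}\right)$ with $\gamma=\alpha+\beta$ using trigonometric angle-addition identities, and then for $d\ge 3$ transfers this to $\R^d$ by choosing an orthonormal basis $\xi_1,\xi_2$ of the plane $\Pi_{u,v}$ and writing $B=\cos\gamma(\xi_1\otimes\xi_1-\xi_2\otimes\xi_2)+\sin\gamma(\xi_1\otimes\xi_2+\xi_2\otimes\xi_1)$. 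Geometrically both matrices are reflections across the bisector of $u$ and $v$ within $\Pi_{u,v}$, so the essential idea coincides, but your construction avoids the $d=2$ versus $d\ge 3$ case split and the coordinate computation, and the identity $\|B\|=1$ drops out immediately from orthogonality of a Householder matrix. One small difference worth noting: the paper's $B$ vanishes on $\Pi_{u,v}^\perp$ (so has eigenvalues in $\{0,\pm 1\}$), while yours is the identity there (eigenvalues in $\{\pm 1\}$); both satisfy $\|B\|=1$ and have all eigenvalues of modulus at most $1$, which is what is actually used in the subsequent Lemma~\ref{lemma: special functions}, so either choice is fine. Your brief handling of the degenerate cases is also correct: $u=v$ is treated separately, and $u=-v$ is automatically absorbed by your formula (then $w=u$ and $Bu=-u=v$).
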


\begin{proof}
First, observe that the result is trivial if $u=\pm v$ by choosing $B = \pm {\rm Id}$. We start by proving the statement for $d=2$ (Step 1) and then address the general case (Step 2). 

\noindent \emph{Step 1.} We prove the statement for $d=2$. Let $u=(\cos\alpha,\sin\alpha), v=(\cos\beta,\sin\beta) \in \mathbb{S}^1$ with $\alpha,\beta\in [0,2\pi)$. We let  $\gamma=\alpha+\beta$ and introduce the following matrix which is a composition of a rotation and a reflection: 
$$
B:= \left( \begin{matrix} \cos\gamma & -\sin\gamma \\ \sin\gamma & \cos\gamma \end{matrix} \right)\left(\begin{matrix} 1 &  0 \\ 0 & -1 \end{matrix} \right) = \left( \begin{matrix} \cos\gamma & \sin\gamma \\ \sin\gamma & -\cos\gamma \end{matrix} \right).
$$
Note that $B$ is symmetric. By using the angle sum identities $\cos(\gamma-\alpha) = \cos\alpha\cos\gamma + \sin\alpha\sin\gamma $  and $\sin(\gamma-\alpha) = \cos\alpha\sin\gamma - \sin\alpha\cos\gamma$ we get $Bu=v$. Moreover, as $|B w| = |w|$ for all $w \in \R^2$, we find that $\Vert B \Vert=1$.  This concludes the first step.

\noindent \emph{Step 2.} Let $u,v\in\mathbb{S}^{d-1}$ with  $u\neq\pm v$. Let us consider the two-dimensional plane $\Pi_{u,v}$ in $\R^d$ spanned  by the two vectors $u$ and  $v$. Fix an orthonormal basis $\xi_1, \xi_2 \in \R^d$ of $\Pi_{u,v}$  and  note that $u = \cos(\alpha) \xi_1 + \sin(\alpha) \xi_2$ and  $v = \cos(\beta) \xi_1 + \sin(\beta) \xi_2$ for some $\alpha,\beta \in [0,2\pi)$. We define the matrix 
\begin{align*}
B = \cos(\gamma)  \big(\xi_1 \otimes \xi_1 - \xi_2 \otimes \xi_2   \big) + \sin(\gamma) \big(\xi_1 \otimes \xi_2 + \xi_2 \otimes \xi_1  \big)  ,
\end{align*}
where $\gamma = \alpha + \beta$. As in Step 1, we can check that $B \in \R^{d \times d}_{\rm sym}$, $\Vert B \Vert=1$, and $Bu=v$.   This concludes the proof.
\end{proof}

In the following, the symmetry of the matrices given in Lemma \ref{lem: Bu=v} will be crucial as it allows us to diagonalize the matrices  and to represent the function $f$ in \eqref{eq: g,nu} in terms of functions similar to the prototype introduced in Example \ref{ex: prototyp}.  We are now in a position to prove Lemma \ref{lemma: special functions}.

\begin{proof}[Proof of Lemma \ref{lemma: special functions}]

 Without restriction we assume that $M,a>0$.  We start by defining the functions $g_h$ (Step 1) and then show equality in \eqref{symmetric joint convexity} (Steps 2--3).

\noindent \emph{Step 1: Definition of the functions $g_h$.} We start by introducing the class of bounded, uniformly continuous, and conservative vector fields $(g_h)_{h \in \N} \in C(\R^d;\R^d)$. Given  $M, a >0$,  define $\eta_M\colon\R\rightarrow [0,+\infty)$ by $\eta_M(t):=   \min\{|t|,M\}$ for $t\in \R$, i.e., 
\begin{align}\label{eq: basic identity}
\theta_{M,a}(i,j,\nu)= \eta_M(a|i-j|)|\nu| \quad \quad \text{for $(i,j,\nu) \in \R^d \times \R^d \times \R^d$}.
\end{align}
It is elementary to check that $\eta_M$ is an even, uniformly continuous, and  subadditive function. 
Consider a symmetric matrix $B$ with $\Vert B \Vert=1$, as provided  by Lemma \ref{lem: Bu=v}.  Let $\{ \xi_1,\ldots,\xi_d \}$ be an orthonormal basis of $\R^d $ made of eigenvectors of $B$ and let $\{\lambda_1,\ldots,\lambda_d \}$ be the set of corresponding eigenvalues. Note that $|\lambda_k| \le 1$ for $k=1,\ldots,d$ since $\Vert B \Vert = 1$.   Finally, let us fix $\mu\in\mathbb{S}^{d-1}$ and $c\in \R^d$.   We define $g_{B,\mu,c}\colon \R^d \to \R^d$ by
\begin{align}\label{eq: g-def}
g_{B,\mu,c}(w):= \sum_{k=1}^d\lambda_k \mu_k \,\eta_M\left(a\left(\frac{\langle w, \xi_k\rangle}{\mu_k}-c_k  \right)\right)\xi_k \ \ \ \ \text{for all } w \in \R^d.
\end{align}
 Here and in the following, an addend is interpreted to be zero whenever $\mu_k=0$. Clearly, each $g_{B,\mu,c}$ is bounded and  uniformly continuous. It is elementary to check that $g_{B,\mu,c}$ is conservative with potential
$$\sum_{k=1}^d \frac{\lambda_k \mu_k^2}{a} \, \Theta_M\left(a\left(\frac{\langle w, \xi_k\rangle}{\mu_k}-c_k  \right)\right), $$ 
where $\Theta_M$ denotes a primitive of $\eta_M$, cf.\ the prototypes discussed in Example \ref{ex: prototyp}.  

We denote by $(u_k,v_k)_k$ a countable dense set in $\mathbb{S}^{d-1}\times \mathbb{S}^{d-1}$ and denote by $(B^k)_k$ the symmetric  matrices from     Lemma \ref{lem: Bu=v}  satisfying $B^k\,u_k=v_k$. Moreover, let $(\mu^l)_l$ be a countable, dense set in $\mathbb{S}^{d-1}$ and let $(c^n)_n$ be a countable, dense set  in $\R^d$.   To shorten the notation, we label the countable set of functions $(g_{B^k,\mu^l,c^n})_{k,l,n}$ by $(g_h)_{h \in \N}$.

Let us now show that $\theta_{M,a}$ is symmetric jointly convex, namely, for every $(i,j,\nu) \in \R^d \times \R^d \times \R^d$,  $i\neq j$,  there holds  
\begin{align}\label{prop: |.|_M joint sym convex}
\theta_{M,a}(i,j,\nu) = \sup_{h\in\mathbb{N}}\left\langle g_h(i)-g_h(j),\nu   \right\rangle.
\end{align}
We split the proof into two inequalities. Before we enter into the details, let us briefly explain the rough ideas behind the parameters $B$, $\mu$, and $c$: we will choose $B$, $\mu$, $c$ in an optimal way in order to obtain one inequality, see \eqref{eq: B choice}--\eqref{eq: mu choice}. In particular, we can choose $B$ such that $g_{B,\mu,c}(i)-g_{B,\mu,c}(j)$ and $\nu$ are aligned. Moreover, $c$ can be selected such that $g_{B,\mu,c}(i)-g_{B,\mu,c}(j) = g_{B,\mu,0}(i-j)$. (This is inspired by \cite[Example 5.23]{Ambrosio-Fusco-Pallara:2000}.) Finally, $\mu$ will allow us to deal with the nonlinearity of $\eta_M$.

\noindent \emph{Step 2: Proof of ``$\le$''.}  We consider the symmetric matrix $B$ given by Lemma \ref{lem: Bu=v} such that 
\begin{align}\label{eq: B choice}
\frac{B(i-j)}{|i-j|}=\frac{\nu}{|\nu|}.
\end{align}
Moreover, we choose $\mu=(\mu_1,\mu_2,\dots,\mu_d)\in\mathbb{S}^{d-1}$ by 
\begin{align}\label{eq: do not remove}
\mu_k=\frac{\langle i-j,\xi_k \rangle}{|i-j|}  \ \text{ for $k=1,\ldots,d$}.
\end{align}
By  $c=(c_1,c_2,\dots,c_d)\in\R^d$ we denote  the vector with  
\begin{align}\label{eq: mu choice}
c_k= \langle j,  \xi_k \rangle /\mu_k,
\end{align}
whenever $\mu_k\neq 0$ and $c_k = 0$ else.  For brevity, we write $g=g_{B,\mu,c}$. In view of \eqref{eq: g-def} and  by the choices of $\mu$ and $c$, we get 
\begin{align*}
\left\langle g(i)-g(j),\nu  \right\rangle &= \left\langle \sum_{k=1}^d \lambda_k \mu_k \left[\eta_M\left( a\frac{\langle i, \xi_k\rangle}{\mu_k} - ac_k  \right)-\eta_M\left( a\frac{\langle j, \xi_k \rangle}{\mu_k} - ac_k  \right)  \right]\xi_k,\nu  \right\rangle \\&= \left\langle \sum_{k=1}^d \lambda_k \mu_k \,  \eta_M\left( a\frac{ \langle i-j, \xi_k \rangle}{\mu_k}  \right) \xi_k,\nu  \right\rangle= \left\langle \sum_{k=1}^d \lambda_k \frac{\langle i-j, \xi_k \rangle}{|i-j|}  \eta_M\left( a|i-j|  \right) \xi_k,\nu  \right\rangle.
\end{align*}
Since $(\xi_k)_k$ is an orthonormal basis of $\R^d $ made of eigenvectors of $B$ and  $(\lambda_k)_k$ are the corresponding eigenvalues, we get by \eqref{eq: basic identity} and \eqref{eq: B choice}  
\begin{align*}
\left\langle g(i)-g(j),\nu  \right\rangle &= \frac{\eta_M\left( a|i-j|  \right)}{|i-j|}\left\langle\sum_{k=1}^d\lambda_k \langle i-j, \xi_k\rangle\,\xi_k,\nu \right\rangle = \frac{\eta_M\left( a|i-j|  \right)}{|i-j|}  \left\langle B(i-j),\nu \right\rangle \\& = \eta_M( a|i-j| )|\nu|   =  \theta_{M,a}(i,j,\nu).
\end{align*}
 By the density of $(u_k,v_k)_k$, $(\mu^l)_l$, and  $(c^n)_n$ we get that the function $g$ considered above can be approximated by $(g_h)_{h \in \N}$.   Thus,  we obtain inequality  ``$\le$'' in \eqref{prop: |.|_M joint sym convex}. 

\noindent \emph{Step 3: Proof of ``$\ge$''.} Fix any $g=g_{B,\mu,c}$ as above.  For brevity, we define 
$$b_k:= \eta_M\left( a\frac{\langle i, \xi_k\rangle}{\mu_k} - ac_k  \right)-\eta_M\left( a\frac{\langle j, \xi_k\rangle}{\mu_k} - ac_k  \right)$$
for $k=1,\ldots,d$. Since $\eta_M$ is nonnegative, subadditive, and even, we get 
\begin{align}\label{eq: b bound}
|b_k| \le  \eta_M\left( a\frac{\langle i-j, \xi_k\rangle}{\mu_k}\right).
\end{align}
Since  $|\lambda_k| \le 1$ for $k=1,\ldots,d$ (recall $\Vert B \Vert =1$) and $( \xi_k)_k$ forms an orthonormal basis, we get for every $h\in\mathbb{N}$  by  \eqref{eq: g-def} and  the Cauchy-Schwarz inequality 
\begin{align*}
\left|\left\langle g(i)-g(j),\nu  \right\rangle\right|&= \left| \left\langle \sum_{k=1}^d \lambda_k \mu_k b_k\xi_k,\nu  \right\rangle\right| \le   \left( \sum_{k=1}^d(\mu_kb_k)^2\right)^{1/2}|\nu|.
\end{align*}
We now distinguish two cases: if $a|i-j| \le M$, we deduce from  \eqref{eq: basic identity}, \eqref{eq: b bound}, and the fact that $\eta_M(t) \le |t|$ for $t \in \R$ that
\begin{align*}
\left|\left\langle g(i)-g(j),\nu  \right\rangle\right|&\le   \left( \sum_{k=1}^d\mu_k^2 \left( a\frac{\langle i-j, \xi_k\rangle}{\mu_k}\right)^2 \right)^{1/2}|\nu| = a|i-j||\nu| = \theta_{M,a}(i,j,\nu).
\end{align*}
 Otherwise, if $a|i-j| > M$, in view of  \eqref{eq: basic identity}, \eqref{eq: b bound}, we find by using $\Vert \eta_M \Vert_\infty \le M$ and  $\mu \in \mathbb{S}^{d-1}$  that
 \begin{align*}
\left|\left\langle g(i)-g(j),\nu  \right\rangle\right|&\le   \left( \sum_{k=1}^d M^2\mu_k^2 \right)^{1/2}|\nu| = M|\nu| = \theta_{M,a}(i,j,\nu).
\end{align*} 
Taking the supremum over all $(g_h)_{h \in \N}$ we obtain inequality ``$\ge$'' in \eqref{prop: |.|_M joint sym convex}. This concludes the proof. 
\end{proof}

\BBB

\begin{example}
An example of symmetric jointly convex functions is given by the class of functions considered in \cite{GZ1}, namely 
$$f(i,j,\nu) =  \varphi\big( |\langle i-j, \nu \rangle| \big)  $$
for   convex, subadditive, and increasing functions $\varphi \colon [0,\infty) \to [0,\infty)$. In fact, by \cite[Theorem 1.2]{GZ1} such functions can be written as the supremum of functions of the form $\bar{f}(i,j,\nu) = a + b |\langle i-j, \nu \rangle|$ for $a,b \ge 0$. Therefore, in view of Remark \ref{rem: take sup}, we need to check that $\bar{f}$ is symmetric jointly convex for given $a,b \ge 0$. The constant function $a$ has this property by Theorem \ref{example: g subadditive}. Moreover, the symmetric joint convexity of $(i,j,\nu) \mapsto b|\langle i-j, \nu \rangle|$ follows from Definition \ref{def: symm-conv} with $g_1(x) = bx$ and $g_2(x) = -bx$ for $x \in \R^d$.    
\end{example}
\EEE

\subsection{A further class of symmetric jointly functions}\label{sec: orlando}

In this subsection, we revisit a class of functions considered  in a more general context  in  \cite{DalMOT}, where the authors prove that the associated energy functionals, see \eqref{eq: basic problem}, are lower semicontinuous.  Given even,  continuous,  and subadditive function $\theta_k\in C(\R;[0,+\infty))$,  $k=1,\ldots,d$,  with $\theta_k(0) = 0 $,  we define the function $f\colon\R^d\times\R^d\times \R^d \rightarrow [0,+\infty)$ by
\begin{align}\label{eq: G DMasoRO}
f(i,j,\nu):= \sup_{(\xi_1,\dots,\xi_k)}\left(\sum_{k=1}^d \theta_k \big( \langle i - j  ,  \xi_k \rangle\big)^2 \, | \langle \nu, \xi_k\rangle|^2 \right)^{1/2},
\end{align}
where the supremum is taken over all orthonormal bases $(\xi_k)_{k=1}^d$ of $\R^d$.

We prove that functions of this form are symmetric jointly convex which provides an alternative (and in our opinion simpler)  approach to the lower semicontinuity of the  functional in \eqref{eq: basic problem} for $f$ as above, when restricted to $PR(\Omega)$.   Let us also mention that, in contrast to the \BBB class considered in \eqref{eq: g,nu}, \EEE the functions in \eqref{eq: G DMasoRO}  may in general be anisotropic.

\begin{proposition}
Let $\theta_k\in C(\R;[0,+\infty))$ be even,   continuous,  and subadditive functions with $\theta_k(0) = 0$  for $k=1\ldots,d$.  Then, the function $f\colon\R^d\times\R^d\times  \R^d \rightarrow [0,+\infty)$ defined in (\ref{eq: G DMasoRO}) is symmetric jointly convex.
\end{proposition}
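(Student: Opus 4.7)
The plan is to adapt Lemma \ref{lemma: special functions} by upgrading its scalar truncation $\eta_M$ to a vector-valued construction indexed by an orthonormal basis, realizing $f$ as the supremum of linear-in-$\nu$ expressions arising from conservative vector fields.

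First I would rewrite $f$ in the linear form required by Definition \ref{def: symm-conv}. The Euclidean norm identity $\bigl(\sum_k a_k^2 b_k^2\bigr)^{1/2} = \sup_\tau \sum_k \tau_k a_k b_k$, taken over $\tau \in \mathbb{S}^{d-1}$ with $\tau_k \ge 0$ and $a_k,b_k \ge 0$, combined with the evenness of each $\theta_k$ (which lets me absorb the sign of $\langle \nu, \xi_k\rangle$ into $\xi_k$, since flipping $\xi_k \to -\xi_k$ leaves the class of orthonormal bases unchanged), yields
$$f(i,j,\nu) = \sup_{(\xi_k),\,\tau} \sum_{k=1}^d \tau_k\, \theta_k\bigl(\langle i-j, \xi_k\rangle\bigr)\, \langle \nu, \xi_k\rangle,$$
the supremum being over all orthonormal bases $(\xi_k)$ of $\R^d$ and all $\tau \in \mathbb{S}^{d-1}$ with $\tau_k \ge 0$.

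Next I would truncate to ensure boundedness and uniform continuity: setting $\theta_k^{(N)} := \min\{\theta_k,N\}$, each $\theta_k^{(N)}$ remains even, continuous, subadditive with $\theta_k^{(N)}(0)=0$, and the subadditivity+evenness estimate $|\theta_k^{(N)}(s) - \theta_k^{(N)}(t)| \le \theta_k^{(N)}(s-t)$ delivers uniform continuity. For each quadruple $\bigl((\xi_k),\tau,c,N\bigr)$ with $c \in \R^d$ I would define
$$g^{N}_{(\xi),\tau,c}(w) := \sum_{k=1}^d \tau_k\, \theta_k^{(N)}\bigl(\langle w, \xi_k\rangle - c_k\bigr)\, \xi_k,$$
which by Example \ref{ex: prototyp} is bounded, uniformly continuous, and conservative (its potential being the obvious sum of primitives of the $\theta_k^{(N)}$).

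The heart of the proof is the two-sided estimate, which mirrors Steps 2--3 of Lemma \ref{lemma: special functions}. For every parameter choice, the bracket $|\theta_k^{(N)}(\langle i,\xi_k\rangle - c_k) - \theta_k^{(N)}(\langle j,\xi_k\rangle - c_k)|$ is dominated by $\theta_k(\langle i-j, \xi_k\rangle)$ thanks to subadditivity+evenness, so Cauchy-Schwarz with $|\tau|\le 1$ gives $\langle g^{N}_{(\xi),\tau,c}(i) - g^{N}_{(\xi),\tau,c}(j), \nu\rangle \le f(i,j,\nu)$. Conversely, given $(i,j,\nu)$ I would first flip signs of the $\xi_k$ so that $\langle \nu,\xi_k\rangle \ge 0$ and then choose $c_k := \langle j, \xi_k\rangle$; the bracket then collapses to $\theta_k^{(N)}(\langle i-j, \xi_k\rangle)$, and monotone passage $N\to\infty$ recovers $\sum_k \tau_k\, \theta_k(\langle i-j,\xi_k\rangle)\,\langle \nu,\xi_k\rangle$, whose further supremum over $(\xi_k)$ and $\tau$ equals $f(i,j,\nu)$. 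A final density step restricts $\bigl((\xi_k),\tau,c,N\bigr)$ to a countable set, using continuity of the inner product in the parameters for fixed $(i,j,\nu)$, and yields the required countable family of fields.

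The main obstacle is the presence of $|\langle \nu,\xi_k\rangle|$ inside the Euclidean norm defining $f$: the sign-flipping trick needed to replace absolute values by signed inner products rests crucially on the evenness of the $\theta_k$, and this is what prevents a direct one-shot construction. Boundedness and uniform continuity then force the truncation layer as a secondary device, handled by the monotone passage $N\to\infty$.
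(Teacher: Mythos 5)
Your proposal is correct and follows essentially the same strategy as the paper's proof: truncate the $\theta_k$ to obtain bounded uniformly continuous functions, realize $f$ as a supremum of $\langle g(i)-g(j),\nu\rangle$ for conservative fields of the prototype form $\sum_k (\text{scalar})\cdot\theta_k^{(N)}(\langle w,\xi_k\rangle - c_k)\,\xi_k$, prove the upper bound by subadditivity plus Cauchy--Schwarz, and prove the lower bound by the choice $c_k = \langle j,\xi_k\rangle$ (collapsing the bracket) followed by optimizing the remaining parameters. The only cosmetic difference is your parametrization of the scalar coefficients: you absorb signs by flipping basis vectors and use $\tau\in\mathbb{S}^{d-1}$ with $\tau_k\ge 0$, whereas the paper keeps a fixed basis and introduces an explicit sign vector $\sigma\in\{-1,1\}^d$ together with $p\in\mathbb{Q}^d$, $|p|\le 1$; these are equivalent encodings of the same idea.
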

\begin{proof}
 We start by noticing that each $\theta_k$ is uniformly continuous. In fact, suppose by contradiction that this was false. Then, there exists $\delta>0$ and a sequence of pairs $(x_n,y_n) \in \R^2$ such that $|x_n -y_n| \to 0$ as $n \to \infty$ and $\theta_k(x_n) \le \theta_k(y_n) - \delta$ for all $n \in \N$. But as  $\theta_k$ is subadditive and even, we get for $n$ large enough that  $\theta_k(y_n) \le \theta_k(x_n) + \theta(|x_n-y_n|) \le \theta_k(x_n) + \delta/2$, where the last step follows from  $|x_n -y_n| \to 0$ and $\theta_k(0)=0$. This is a contradiction. 

We also observe that it is not restrictive to assume that each $\theta_k$ is bounded. In fact, otherwise we consider the truncations $\theta_k^M$ defined by $\theta_k^M(t) := \min \lbrace \theta_k(t), M\rbrace$ for $t \in \R$  which are again even, uniformly continuous, and subadditive. Then, by $\sup_{M>0} \theta_k^M = \theta_k$ and Remark \ref{rem: take sup} it clearly suffices to prove that $f$ in \eqref{eq: G DMasoRO} with $\theta_k^M$ in place of $\theta_k$ is symmetric jointly convex. For simplicity, we assume in the following that each $\theta_k$ is bounded. 

  By definition of $f$  and Remark \ref{rem: take sup},  it is sufficient to show that the function $\bar{f}\colon\R^d\times\R^d\times \R^d\rightarrow [0,+\infty)$ defined by 
$$
\bar{f}(i,j,\nu):= \left(\sum_{k=1}^d \theta_k\big( \langle i-j,  \xi_k\rangle\big)^2\, |\langle \nu, \xi_k\rangle|^2 \right)^{1/2}
$$
is symmetric jointly convex, where $(\xi_k)_k$ is any orthonormal basis of $\R^d$. To this end, for each $p\in \mathbb{Q}^d$ with $|p|\leq 1$, each $q\in\mathbb{Q}^d$, and each $\sigma\in \{-1,1 \}^d$, we define the conservative vector field $g_{p,q,\sigma}\colon\R^d \rightarrow \R^d$ by
\begin{align*}
g_{p,q,\sigma} (w):=  \sum_{k=1}^d  \sigma_k\langle p, \xi_k \rangle\, \theta_k\big( \langle w-q, \xi_k\rangle\big)\,\xi_k \ \ \ \ \text{for all }  w \in \R^d, 
\end{align*} 
cf.\ Example \ref{ex: prototyp}. Clearly,  $g_{p,q,\sigma}$ is bounded and uniformly continuous.  Our goal is to prove that
\begin{align}\label{eq: goal DalMaso Toader Orlando}
\bar{f}(i,j,\nu)= \sup_{p,q,\sigma}\left\langle g_{p,q,\sigma}(i)-g_{p,q,\sigma}(j),\nu \right\rangle  \quad \quad \text{for all } (i,j,\nu) \in \R^d\times \R^d\times \R^d,  \ i \neq j. 
\end{align}
We show the two inequalities separately. 

\noindent \emph{Step 1: Proof of ``$\ge$''.} Fix $(i,j,\nu)\in\R^d\times \R^d\times \R^d$  with $i\neq j$.  Consider $p\in \mathbb{Q}^d$ with $|p|\leq 1$, $q\in\mathbb{Q}^d$, and $\sigma\in \{-1,1 \}^d$. We get
\begin{align}\label{pinco}
\big| \left\langle g_{p,q,\sigma}(i)-g_{p,q,\sigma}(j), \nu \right\rangle  \big| \leq \sum_{k=1}^d\left|  \theta_k\big( \langle i-q, \xi_k\rangle \big)-\theta_k\big( \langle j-q, \xi_k\rangle\big)  \right||\langle p, \xi_k\rangle| |\langle \nu, \xi_k\rangle|.
\end{align}
Since $\theta_k$ is nonnegative, subadditive, and even, we obtain 
\begin{align*}
\big|  \theta_k\big( \langle i-q, \xi_k\rangle\big)- \theta_k\big(\langle j-q, \xi_k\rangle\big)\big|\leq \theta_k\big( \langle i-j, \xi_k \rangle\big)\quad  \quad \text{ for all }  k=1,\ldots,d.
\end{align*}
Then, by (\ref{pinco}), the Cauchy-Schwarz inequality, and the fact that $(\xi_k)_k$ is an orthonormal basis  we get
\begin{align*}
\big| \left\langle g_{p,q,\sigma}(i)-g_{p,q,\sigma}(j),\nu \right\rangle  \big| &\leq \sum_{k=1}^d \theta_k\big( \langle i-j, \xi_k\rangle\big)  |\langle \nu, \xi_k\rangle||\langle p,  \xi_k\rangle|\leq \left( \sum_{k=1}^d \theta_k\big(\langle i-j, \xi_k\rangle\big)^2 |\langle \nu, \xi_k\rangle|^2   \right)^{1/2}\hspace{-0.2cm}|p|.
\end{align*}
By recalling $|p|\le1$ and by passing to  the supremum over $(p,q,\sigma)$ in their domain, we obtain inequality  ``$\ge$'' in \eqref{eq: goal DalMaso Toader Orlando}. 

\noindent \emph{Step 2: Proof of ``$\le$''.} Fix $(i,j,\nu)\in\R^d\times \R^d\times \R^d$  with $i \neq j$.  Let us set $\sigma_k= \mathrm{sign} \langle \nu, \xi_k\rangle$ for $k=1,\dots,d$. Moreover, consider a sequence of vectors $(q_h)_{h\in\mathbb{N}} \subset \mathbb{Q}^d$ such that $\lim_{h\rightarrow \infty}q_h= j$. Then, for each $p \in \mathbb{Q}^d$ with $|p|\leq 1$, we get by $\theta_k(0) = 0 $ that 
\begin{align}\label{panco}
\lim_{h\rightarrow \infty}\left\langle g_{p,q_h,\sigma}(i)-g_{p,q_h,\sigma}(j), \nu \right\rangle = \sum_{k=1}^d \langle p, \xi_k \rangle \, \theta_k\big(\langle i-j, \xi_k\rangle\big) |\langle \nu, \xi_k\rangle|= \langle p,  \mu\rangle,
\end{align}
where the vector  $\mu \in \R^d$ is defined by
\begin{align*}
\mu:= \sum_{k=1}^d \theta_k\big( \langle i-j, \xi_k\rangle \big) |\langle \nu, \xi_k\rangle| \xi_k.
\end{align*}
We choose $(p_m)_{m\in\mathbb{N}}\subset \mathbb{Q}^d$, $|p_m|\le 1$, such that $\lim_{m\rightarrow \infty}p_m=\mu/|\mu|$. Then, by (\ref{panco}) we conclude  
\begin{align*}
\sup_{p,q,\sigma}\left\langle g_{p,q,\sigma}(i)-g_{p,q,\sigma}(j),\nu \right\rangle &\ge  \lim_{m\rightarrow \infty}\lim_{h\rightarrow \infty}\left\langle g_{p_m,q_h,\sigma}(i)-g_{p_m,q_h,\sigma}(j),\nu \right\rangle = |\mu|\\& = \left(\sum_{k=1}^d \theta_k\big( \langle i-j,  \xi_k\rangle\big)^2|\langle \nu, \xi_k\rangle|^2 \right)^{1/2} = \bar{f}(i,j,\nu).
\end{align*}
This proves  inequality  ``$\le$'' in \eqref{eq: goal DalMaso Toader Orlando}. 
\end{proof}

\BBB

\begin{example}
In \cite{GZ2}, functions of the form 
$$f(i,j,\nu) := \sup_{\xi \in \mathbb{S}^{d-1}}   |\langle \nu, \xi \rangle  | \,  \psi\big( |\langle i-j, \xi \rangle| \big)  $$
are considered, where $\psi \colon [0,\infty) \to [0,\infty)$ is a continuous, nondecreasing, and  subadditive function. This is a special case of \eqref{eq: G DMasoRO} for $\theta_1 = \psi(|\cdot|)$, $\theta_k = 0$ for $k=2,\ldots,d$, and $\xi = \xi_1$. 
\end{example}

  \EEE

\begin{example}\label{rem: examples DMOT}
Among the integrands of the form \eqref{eq: G DMasoRO}, we may mention $f(i, j, \nu)=|(i-j)\odot \nu|$, where the symbol $|\cdot|$ denotes the Frobenius norm, and  $\odot$ denotes the symmetric tensor product $a \odot b = \frac{1}{2}(a \otimes b + b \otimes a)$ for $a,b\in \R^d$.   This is obtained for $\theta_k(t)=|t|$,  $k=1,\ldots,d$,  in \eqref{eq: G DMasoRO}. If one instead chooses $\theta_k  (t)= \min  \{|t|, M\}$ for a fixed $M>0$, the resulting $f$ is a bounded integrand satisfying $f(i, j, \nu)=|(i-j)\odot \nu|$ when $|i-j|\le M$,  $f(i,j, \nu)=M|\nu|$ when $|i-j|\ge \sqrt2 M$, with a smooth transition in  the annulus between the radii $M$ and $\sqrt2 M$ (see \cite[Section 6]{DalMOT}).
\end{example}

\subsection{Symmetric biconvex functions}\label{sec: biconvex}

In this subsection, we introduce and study \emph{symmetric biconvex functions}.

\begin{definition}[Symmetric biconvexity]
We say that $f\colon\R^d\times \R^d\times  \R^d \to [0,+\infty)$ is a \emph{symmetric biconvex function} if there exists a convex and positively $1$-homogeneous function $\theta\colon\R^{d\times d}_{\rm sym}\rightarrow [0,+\infty)$ such that
$$
f(i,j,\nu)=\theta\big((i-j)\odot \nu\big)\quad \quad \text{for all } (i,j,\nu)\in\R^d\times \R^d\times    \R^d. 
$$ 
\end{definition}
Here, $\odot$ denotes the symmetric tensor product $a \odot b = \frac{1}{2}(a \otimes b + b \otimes a)$ for $a,b\in \R^d$. This notion is related to biconvexity defined in \cite[Section 2.2]{AmbrosioBraides2}.
 For $\theta$ being the Frobenius norm, the corresponding integrand  is symmetric jointly convex (and hence $BD$-elliptic), as discussed in Example \ref{rem: examples DMOT}. In particular, the functional
\[
u\mapsto \int_{J_u} |(u^+-u^-)\odot \nu_u|\,\mathrm{d}\mathcal{H}^{d-1}
\] 
is lower semicontinuous  on $PR(\Omega)$,  see Theorem \ref{thm: LSC symjointconvex}.  In the general case, the situation is more complicated.  We may indeed prove that
\begin{itemize}
\item symmetric biconvex functions are symmetric jointly convex when restricted to compact subsets, in a sense made precise by Proposition \ref{prop: bijoint} below.
\item symmetric biconvex functions with  $\lbrace \theta =0 \rbrace = \lbrace 0 \rbrace$  are $BD$-elliptic, cf.\ Proposition \ref{prop: biconvex}. 
\end{itemize}
\BBB These two \EEE results only allow us, in general, to deduce that for biconvex functions the functional  $\mathcal{F}$ defined in (\ref{eq: basic problem}) is lower semicontinuous in $PR(\Omega)$ along  uniformly bounded sequences, see Corollary \ref{cor: unbounded}.
This can also be inferred by the results  in \cite{BCD}, where lower semicontinuity  in the space $SBD$ of integral functionals corresponding to symmetric biconvex functions has already been addressed. \BBB We emphasize that, in the case  $\lbrace \theta =0 \rbrace = \lbrace 0 \rbrace$, the necessity of the $L^\infty$-bound is only a technical  issue due to our method. Indeed,  lower semicontinuity also holds without this assumption by directly proving a lower semicontinuity result for symmetric jointly convex functions.  We defer the proof of this fact to the next section (see Theorem \ref{thm: biconv}). \EEE

%
%

We now state and prove the announced results. We first address the relation of symmetric biconvex functions to  symmetric jointly convex functions.

\begin{proposition}[Biconvexity and joint convexity]\label{prop: bijoint}
Let $f\colon\R^d\times \R^d\times \R^d\rightarrow [0,+\infty)$ be a symmetric biconvex function. Then, there exists a sequence  $(f_M)_{M \in \N}$  of symmetric jointly convex functions such that $f(i,j,\nu)=f_M(i,j, \nu)$ for all $(i,j, \nu) \in B_M(0)\times B_M(0) \times \R^d$. 
\end{proposition}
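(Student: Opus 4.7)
The plan is to exploit the fact that $\theta \colon \R^{d\times d}_{\rm sym} \to [0,+\infty)$ is convex and positively $1$-homogeneous, hence a nonnegative sublinear functional on a finite-dimensional space. By a standard duality argument, $\theta$ is the support function of a closed convex set $K \subset \R^{d\times d}_{\rm sym}$. Choosing a countable dense subfamily $(B_h)_{h \in \N} \subset K$, we can write
\[
\theta(A) = \sup_{h \in \N} B_h : A \qquad \text{for all } A \in \R^{d\times d}_{\rm sym}.
\]
Since each $B_h$ is symmetric, we have $B_h : \big( (i-j)\odot \nu \big) = \langle B_h(i-j), \nu \rangle = \langle B_h i - B_h j, \nu \rangle$, so that
\[
f(i,j,\nu) = \sup_{h \in \N} \langle B_h i - B_h j, \nu \rangle \quad \text{for all } (i,j,\nu) \in \R^d \times \R^d \times \R^d.
\]
The linear maps $w \mapsto B_h w$ are automatically conservative (with quadratic potential $\tfrac{1}{2} \langle B_h w, w \rangle$), but they fail to be bounded or uniformly continuous on $\R^d$, so they do not directly verify Definition \ref{def: symm-conv}.

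The key idea to circumvent this is to truncate each $B_h$ in a way that preserves both the conservative character and the crucial difference identity $g_h(i) - g_h(j) = B_h(i-j)$ on $B_M(0) \times B_M(0)$. For each $M \in \N$, pick a bounded, $1$-Lipschitz function $\eta_M \colon \R \to \R$ with $\eta_M(t) = t$ for $|t| \le M$ (for instance $\eta_M(t) = \mathrm{sign}(t)\min\{|t|, M\}$). For each $h$, diagonalize $B_h = \sum_{k=1}^d \lambda_k^h \, \xi_k^h \otimes \xi_k^h$ in an orthonormal eigenbasis and, in the spirit of Example \ref{ex: prototyp}, define
\[
g_h^M(w) := \sum_{k=1}^d \lambda_k^h \, \eta_M\big(\langle w, \xi_k^h \rangle\big) \, \xi_k^h.
\]
Each $g_h^M$ is bounded, uniformly continuous, and conservative with potential $w \mapsto \sum_k \lambda_k^h \, H_M(\langle w, \xi_k^h\rangle)$, $H_M$ being a primitive of $\eta_M$.

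For $i,j \in B_M(0)$, one has $|\langle i, \xi_k^h\rangle|,|\langle j, \xi_k^h\rangle| \le M$, so the choice of $\eta_M$ yields $\eta_M(\langle i, \xi_k^h\rangle) - \eta_M(\langle j, \xi_k^h\rangle) = \langle i-j, \xi_k^h \rangle$ for every $k$. Summing,
\[
g_h^M(i) - g_h^M(j) = \sum_{k=1}^d \lambda_k^h \langle i-j, \xi_k^h \rangle \, \xi_k^h = B_h(i-j) \quad \text{for all } i,j \in B_M(0).
\]
Defining
\[
f_M(i,j,\nu) := \sup_{h \in \N} \langle g_h^M(i) - g_h^M(j), \nu \rangle,
\]
we therefore obtain $f_M(i,j,\nu) = \sup_h \langle B_h(i-j), \nu \rangle = f(i,j,\nu)$ for every $(i,j,\nu) \in B_M(0) \times B_M(0) \times \R^d$. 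By construction, $f_M$ is symmetric jointly convex in the sense of Definition \ref{def: symm-conv} (adding the null vector field to the family ensures $f_M \ge 0$ on the diagonal directions where the identity with $f$ is vacuous).

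The main (mild) obstacle is reconciling two competing requirements in Definition \ref{def: symm-conv}: the vector fields must be both bounded and have differences that reproduce the linear form $\langle B_h(i-j), \nu\rangle$. The diagonalization into one-dimensional eigenspaces is essential here, because it reduces the problem to truncating scalar functions of one variable while keeping the resulting vector field a gradient; a naive cutoff on $B_h w$ would destroy either conservativity or the difference identity. This localization-by-truncation is precisely why we only recover $f$ on $B_M(0) \times B_M(0)$ and not globally, matching the statement of the proposition.
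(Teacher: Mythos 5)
Your proof is correct and follows essentially the same route as the paper: represent $\theta$ as the support function of a bounded set of symmetric matrices, pass to a countable dense subfamily, diagonalize each symmetric matrix in an orthonormal eigenbasis, and truncate coordinate-by-coordinate with $\eta_M$ so that the truncated fields remain gradients and agree with the linear field on $B_M(0)$. The only minor cosmetic difference is that the paper works with a dense family $(Z_h)$ in a bounded subset of $\R^{d\times d}$ and then passes to $Z_h^{\rm sym}$ (using that skew parts annihilate symmetric matrices), whereas you dualize directly in $\R^{d\times d}_{\rm sym}$; you also explicitly add the zero field to guarantee $f_M\ge 0$, a point the paper leaves implicit.
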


\begin{proof}
By assumption, we have  
\begin{align*}
f(i,j,\nu)=\theta\big((i-j)\odot\nu\big),
\end{align*}
with $\theta\colon\R^{d\times d}_{\rm sym} \rightarrow [0,+\infty)$ convex and  positively $1$-homogeneous. It is a well known fact that 
\begin{align*}
\theta (F)= \sup_{Z\in W_{\theta}} F:Z \quad \quad \text{for all } F\in \R^{d\times d}_{\rm sym},
\end{align*}
where $W_{\theta}\subset \R^{d\times d}$ is a bounded set depending on $\theta$.  (Here, the symbol $:$ denotes the scalar product for matrices in $\R^{d\times d}$.)  Let us consider a countable, dense set of matrices $(Z_h)_{h\in\mathbb{N}}$ in $W_{\theta}$. Since $F:Z^{\rm skew}=0$ whenever $F \in \R^{d \times d}_{\rm sym}$ and $Z^{\rm skew}\in \R^{d\times d}_{\rm skew}$, we get
\begin{align}\label{eq: bijoint}
\theta\big((i-j)\odot\nu\big)&= \sup_{h\in\mathbb{N}} \, ((i-j)\odot\nu):Z_h= \sup_{h\in\mathbb{N}} \, ((i-j)\odot\nu) : Z_h^{\rm sym}=
\sup_{h\in\mathbb{N}}\left\langle Z_h^{\rm sym}\, i -Z_h^{\rm sym}\, j,\nu  \right\rangle,
\end{align}
where $Z_h^{\rm sym} := \frac{1}{2}(Z_h^T + Z_h)$. Consequently,
\begin{align}\label{eq: KKK }
\theta\big((i-j)\odot\nu\big)  = \sup_{h\in\mathbb{N}} \,\langle g_h(i) - g_h(j), \nu \rangle, 
\end{align}
where   $g_h$ is defined by $g_h(x) := Z_h^{\rm sym} x$ for $x \in \R^d$.  We define a truncation of each $g_h$ as follows. For $M>0$, we consider the function $\tau_M\colon \R \to \R$ given by $\tau_M(t) = t$ for $|t|\le M$ and $\tau_M(t) = {\rm sgn}(t) M$ else.  
 Let $\{ \xi_1,\ldots,\xi_d \}$ be an orthonormal basis of $\R^d $ made of eigenvectors of $Z_h^{\rm sym}$ and let $\{\lambda_1,\ldots,\lambda_d \}$ be the set of corresponding eigenvalues. Then, we introduce
 $$g^M_h(w) := \sum_{k=1}^d \lambda_k \tau_M\big( \langle w,\xi_k \rangle  \big) \xi_k  \ \ \ \ \text{ for all $w \in \R^d$.} $$
We observe  that  each $g_h^M$ is bounded, uniformly continuous, and conservative, cf.\ the prototype in Example \ref{ex: prototyp}.   Then,  we define the symmetric jointly convex function
\[
f_M(i, j, \nu):=\sup_{h\in\mathbb{N}} \, \langle g^M_h(i) - g^M_h(j), \nu \rangle\,.
\] 
By the definition of $\tau_M$ and \eqref{eq: KKK } we get  $f(i,j,\nu)=f_M(i,j, \nu)$ whenever $i,j \in B_M(0)$. 
 \end{proof}

We remark that, in general, it appears to be difficult to approximate the functions $g_h$  defined after  \eqref{eq: bijoint} from below by conservative and bounded vector fields  on the \emph{entire} $\R^d$. We now show that certain symmetric biconvex functions are $BD$-elliptic.

\begin{proposition}[Biconvexity implies $BD$-ellipticity]\label{prop: biconvex}
 Let  $f:\R^d\times \R^d\times \R^d\rightarrow [0,+\infty)$  be a symmetric biconvex function such that the associated $\theta$ satisfies $\lbrace \theta =0 \rbrace = \lbrace 0 \rbrace$. Then, $f$   is   $BD$-elliptic. 
\end{proposition}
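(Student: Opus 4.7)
The plan is to combine Proposition~\ref{prop: bijoint} with the $BD$-ellipticity from Theorem~\ref{thm: LSC symjointconvex} via a truncation argument on the competitor function. Under the hypothesis $\lbrace\theta=0\rbrace=\lbrace 0\rbrace$, the convex positively $1$-homogeneous function $\theta$ is equivalent to a norm on $\R^{d\times d}_{\rm sym}$; in particular, there exists $c>0$ such that $f(a,b,\nu)\ge c|a-b|$ for all $(a,b,\nu)\in\R^d\times\R^d\times\mathbb{S}^{d-1}$, a quantitative coercivity that will be used to control unbounded competitors.

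First I would invoke Proposition~\ref{prop: bijoint} to obtain a family $(f_M)_{M\in\mathbb{N}}$ of symmetric jointly convex functions with $f=f_M$ on $B_M(0)\times B_M(0)\times\R^d$. An inspection of the construction shows that the conservative vector fields appearing in the supremum representation of $f_M$ are bounded and Lipschitz (with constants depending on $M$), so each $f_M$ is bounded, continuous, satisfies \eqref{eq: symmetry}, and fulfils the uniform continuity condition \eqref{eq: continuity}. By Theorem~\ref{thm: LSC symjointconvex}, each $f_M$ is therefore $BD$-elliptic.

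Next, I would fix $(i,j,\nu)$ with $i\ne j$ and $v\in PR(Q^\nu_1)$ with $\lbrace v\ne u_{i,j,\nu}\rbrace\subset\subset Q^\nu_1$, assuming without loss of generality that $\int_{J_v}f(v^+,v^-,\nu_v)\,\mathrm{d}\mathcal{H}^{d-1}<+\infty$. Writing $v=\sum_{k\in\mathbb{N}}a_{Q_k,b_k}\chi_{P_k}$ with $P_1=\lbrace v=i\rbrace$ and $P_2=\lbrace v=j\rbrace$, for each integer $K\ge 2$ I would introduce the bounded truncation $v_K:=\sum_{k=1}^K a_{Q_k,b_k}\chi_{P_k}$ (set equal to zero on the remaining pieces). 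Since the boundary layer where $v=u_{i,j,\nu}$ is contained in $P_1\cup P_2$, one has $v_K\in PR(Q^\nu_1)\cap L^\infty$ and $\lbrace v_K\ne u_{i,j,\nu}\rbrace\subset\subset Q^\nu_1$. Choosing $M>\max(|i|,|j|,\|v_K\|_\infty)$, the identity $f\equiv f_M$ on the relevant range together with the $BD$-ellipticity of $f_M$ yields
\[
\int_{J_{v_K}}f(v_K^+,v_K^-,\nu_{v_K})\,\mathrm{d}\mathcal{H}^{d-1}=\int_{J_{v_K}}f_M(v_K^+,v_K^-,\nu_{v_K})\,\mathrm{d}\mathcal{H}^{d-1}\ \ge\ f_M(i,j,\nu)=f(i,j,\nu).
\]

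The main obstacle will be to pass to the limit $K\to\infty$, that is, to establish
\[
\limsup_{K\to\infty}\int_{J_{v_K}}f(v_K^+,v_K^-,\nu_{v_K})\,\mathrm{d}\mathcal{H}^{d-1}\ \le\ \int_{J_v}f(v^+,v^-,\nu_v)\,\mathrm{d}\mathcal{H}^{d-1}.
\]
Decomposing the jump set of $v$ into interfaces between two ``kept'' pieces (indices $\le K$), between a ``kept'' and a ``truncated'' piece, and between two ``truncated'' pieces, one observes that $v_K$ and $v$ carry the same jump on the first kind of interface, while on the second and third kinds the contributions differ. Here the hypothesis $\lbrace\theta=0\rbrace=\lbrace 0\rbrace$ enters crucially: the coercivity $f\ge c|\cdot|$ turns finiteness of $\int_{J_v}f$ into $\int_{J_v}|v^+-v^-|\,\mathrm{d}\mathcal{H}^{d-1}<+\infty$. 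Combined with the subadditivity $f(a,0,\nu)\le f(a,b,\nu)+f(b,0,\nu)$ inherited from the subadditivity of the norm $\theta$, and with the fact that $\sum_{l>K}\mathcal{H}^{d-1}(\partial^*P_l)\to 0$, this allows one to bound the spurious contributions by terms vanishing as $K\to\infty$. Combining the resulting $\limsup$ inequality with the previous step yields $\int_{J_v}f\,\mathrm{d}\mathcal{H}^{d-1}\ge f(i,j,\nu)$ and hence $BD$-ellipticity of $f$.
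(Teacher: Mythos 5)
Your approach hits a genuine gap at the final $\limsup$ step, precisely when the competitor $v$ is unbounded. After truncating to $v_K=\sum_{k=1}^K a_{Q_k,b_k}\chi_{P_k}$ with $v_K\equiv 0$ on $\bigcup_{l>K}P_l$, the interfaces $\partial^*P_k\cap\partial^*P_l$ with $k\le K<l$ carry the spurious jump $[v_K]=a_{Q_k,b_k}$, whose contribution is $\theta(a_{Q_k,b_k}\odot\nu_v)$. After applying subadditivity of $\theta$, you are left needing
\[
\sum_{l>K}\int_{\partial^*P_l\cap\bigcup_{k\le K}\partial^*P_k}\theta\big(a_{Q_l,b_l}\odot\nu_v\big)\,\mathrm{d}\mathcal{H}^{d-1}\longrightarrow 0,
\]
or, trading via $|a_{Q_l,b_l}|\le|[v]|+|a_{Q_k,b_k}|$, needing $\big(\max_{k\le K}\|a_{Q_k,b_k}\|_{L^\infty(Q^\nu_1)}\big)\cdot\sum_{l>K}\mathcal{H}^{d-1}(\partial^*P_l)\to 0$. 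The second factor vanishes, but the first can grow without bound when $v$ is unbounded, and the product need not tend to $0$; the finiteness of $\int_{J_v}|[v]|\,\mathrm{d}\mathcal{H}^{d-1}$ controls jumps, not the one-sided trace integrals $\int_{\partial^*P_l}|a_{Q_l,b_l}|\,\mathrm{d}\mathcal{H}^{d-1}$. Note that this is exactly why the analogous truncation in Step~1 of the proof of Theorem~\ref{thm: LSC symjointconvex} is only carried out for bounded $f$, where the spurious term is controlled by $\|f\|_\infty$ times a perimeter; your detour through Proposition~\ref{prop: bijoint} and Theorem~\ref{thm: LSC symjointconvex} therefore only yields the inequality \eqref{eq: def-BD} against uniformly bounded competitors $v$, which is strictly weaker than $BD$-ellipticity.

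The paper's own proof does not truncate $v$ at all and does not go through Proposition~\ref{prop: bijoint}. From the coercivity $\theta\ge c|\cdot|$ on $\R^{d\times d}_{\rm sym}$ and $\int_{J_v}\theta([v]\odot\nu_v)\,\mathrm{d}\mathcal{H}^{d-1}<\infty$ one deduces $\int_{J_v}|[v]|\,\mathrm{d}\mathcal{H}^{d-1}<\infty$, and by Lemma~\ref{lemma: technical} that $v\in SBD(Q^\nu_1)$. Then $Ev$ is a finite $\R^{d\times d}_{\rm sym}$-valued Radon measure concentrated on $J_v$, and the compactly supported perturbation $\{v\ne u_{i,j,\nu}\}\subset\subset Q^\nu_1$ gives $Ev(Q^\nu_1)=Eu_{i,j,\nu}(Q^\nu_1)=(i-j)\odot\nu$. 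Jensen's inequality for the convex, positively $1$-homogeneous $\theta$ applied to the measure $Ev$ then gives $\int_{J_v}\theta([v]\odot\nu_v)\,\mathrm{d}\mathcal{H}^{d-1}\ge\theta(Ev(Q^\nu_1))=f(i,j,\nu)$ directly, with no boundedness assumption on $v$ anywhere. If you want to salvage your route, you would have to replace the last paragraph with this measure-theoretic argument (or find a different control of the spurious trace terms), since the truncation-plus-subadditivity strategy breaks down for unbounded competitors.
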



In the proof, we will use the following technical property,  \BBB which applies in general for $GSBD^p$ functions. 

 \begin{lemma}\label{lemma: technical}
 Let $B \subset \R^d$ be open and bounded.  Suppose $u\in GSBD^p(B)$ such that $\int_{J_u} |[u]| \,  {\rm d}\mathcal{H}^{d-1} <+ \infty$. Then $u \in SBD^p(B)$.    
 \end{lemma}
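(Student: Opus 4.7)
The goal is to upgrade $u$ from $GSBD^p(B)$ to $SBD^p(B)$; since $u\in GSBD^p(B)$ already carries the slicewise $SBV$-structure with $e(u)\in L^p(B)$ and $\mathcal{H}^{d-1}(J_u)<+\infty$, the Cantor-type part of the symmetric distributional derivative automatically vanishes. Hence the whole matter reduces to establishing that $u\in L^1(B;\R^d)$, for then $Eu$ is a finite Radon measure on $B$ with the usual absolutely continuous plus jump decomposition $e(u)\mathcal{L}^d+[u]\odot\nu_u\,\mathcal{H}^{d-1}\mres J_u$, and consequently $u\in SBD^p(B)$.

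To obtain $u\in L^1(B;\R^d)$, the plan is to proceed by truncation plus a Korn--Poincar\'e argument. For each $M\in\mathbb{N}$, one considers a smooth, $1$-Lipschitz map $\phi_M\colon \R^d\to\R^d$ with $\phi_M(z)=z$ on $\{|z|\le M\}$ and $|\phi_M|\le M+1$, and sets $u_M:=\phi_M\circ u$. The chain rule in $GSBD^p$ yields $u_M\in GSBD^p(B)\cap L^\infty(B;\R^d)$ with $|e(u_M)|\le C|e(u)|$ a.e., $J_{u_M}\subset J_u$, and $|[u_M]|\le|[u]|$. Being bounded, $u_M$ lies in $SBD^p(B)$, and the assumption together with $e(u)\in L^p(B)$ gives the uniform bound
\begin{equation*}
|Eu_M|(B)\;\le\; C\|e(u)\|_{L^1(B)}+\int_{J_u}|[u]|\,\mathrm d\mathcal{H}^{d-1}\;=:\;K<+\infty.
\end{equation*}

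Applying the classical Korn--Poincar\'e inequality in $BD$ to each $u_M$ provides a rigid motion $r_M(x)=A_Mx+b_M$ such that $\|u_M-r_M\|_{L^{d/(d-1)}(B)}\le C|Eu_M|(B)\le CK$, independently of $M$. Since $u$ is finite almost everywhere on $B$ (being in $GSBD^p$) and $\phi_M(z)\to z$ pointwise, one has $u_M\to u$ a.e.\ on $B$. The main obstacle is then to show that the sequence of rigid motions $(r_M)_M$ stays bounded: if along some subsequence one had $|A_M|+|b_M|\to+\infty$, then $|r_M|$ would diverge on a set of positive measure in $B$, forcing $|u_M-r_M|$ to diverge on the same set and contradicting the uniform $L^{d/(d-1)}$-estimate (recall that $u_M\to u$ is finite a.e.). Once $(r_M)_M$ is known to be bounded, one extracts $r_{M_n}\to r$, applies Fatou to $|u_{M_n}-r_{M_n}|^{d/(d-1)}$, and deduces $u-r\in L^{d/(d-1)}(B)$, whence $u\in L^1(B;\R^d)$ thanks to the boundedness of $B$.

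With $u\in L^1(B;\R^d)$ in hand, one identifies the distributional symmetric derivative by slicing: the one-dimensional slices of $u$ are $SBV$ with the absolutely continuous part of the derivative prescribed by $\langle e(u)\xi,\xi\rangle$ and the jump part controlled by $\int_{J_u}|[u]|\mathrm d\mathcal{H}^{d-1}$, so $Eu=e(u)\mathcal{L}^d+[u]\odot\nu_u\,\mathcal{H}^{d-1}\mres J_u$, which is precisely the $SBD$-decomposition. Together with $e(u)\in L^p(B)$ and $\mathcal{H}^{d-1}(J_u)<+\infty$, this gives $u\in SBD^p(B)$. The delicate step, and the only nonroutine one, is the control on the rigid motions $r_M$; everything else follows from standard chain rule, slicing, and compactness tools already available in the $GSBD$ theory.
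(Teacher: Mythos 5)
The paper's proof is a one-line citation: it invokes \cite[Theorem~2.9]{Crismale3} (with $\mathbb{A}v = {\rm E}v$), a general result for generalized $\mathbb{A}$-bounded deformation spaces that directly yields the statement. Your proposal instead tries to give a self-contained argument, but it contains a fatal gap at the very first substantive step.

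The flaw is the truncation step. You set $u_M := \phi_M\circ u$ for a vector-valued $1$-Lipschitz map $\phi_M\colon \R^d\to\R^d$ and assert, via a ``chain rule in $GSBD^p$,'' that $u_M\in GSBD^p(B)$ with $|e(u_M)|\le C|e(u)|$ a.e. This is not available. The space $GSBD$ is defined through one-dimensional slices of the scalar products $\langle u,\xi\rangle$ along lines in direction $\xi$; a vector-valued Lipschitz map mixes components, so $\langle \phi_M(u),\xi\rangle$ is not a function of the slice $\langle u,\xi\rangle$ alone, and the slicing structure is destroyed. Formally, even for smooth $u$ one has $e(\phi_M\circ u)=\bigl(\nabla\phi_M(u)\,\nabla u\bigr)^{\rm sym}$, which involves the full gradient $\nabla u$, including its skew part --- precisely the object that is not controlled in $GSBD$. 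Consequently neither $u_M\in GSBD^p(B)$ nor the bound $|e(u_M)|\le C|e(u)|$ can be justified, and the subsequent uniform estimate on $|Eu_M|(B)$, the Korn--Poincar\'e step, and the rigid-motion compactness argument have nothing to stand on. This is not an oversight of detail: the introduction of the paper explicitly flags that Lipschitz truncations, which underpin the analogous arguments in $GSBV$ following \cite{Amb}, are unavailable in the $BD$/$GSBD$ setting, and in fact this unavailability is one of the driving difficulties of the whole paper. The remainder of your outline (extracting bounded rigid motions via Fatou and identifying $Eu$ by slicing once $u\in L^1$ is known) is plausible in spirit, but it cannot be salvaged without replacing the truncation by a genuinely different mechanism.
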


\begin{proof}
The result follows from \cite[Theorem~2.9]{Crismale3} for $\mathbb{A}v={\rm E} v$ (see \cite[Remark~2.5]{Crismale3}). 
\end{proof}

\EEE

We proceed with the proof of Proposition \ref{prop: biconvex}.

\begin{proof}[Proof of Proposition \ref{prop: biconvex}]
Let us fix $(i,j,\nu)\in \R^d\times \R^d \times \mathbb{S}^{d-1}$  with $i\neq j$.  Consider  $v\in PR(Q^\nu_1)$ with $\{v\neq u_{i,j,\nu}\}\subset\!\subset Q^\nu_1$, where $u_{i,j,\nu}$ is the function  defined in (\ref{u_0}).  Without restriction, we may assume that $\int_{J_v} \theta\big([v]\odot \nu_v\big) \, {\rm d}\mathcal{H}^{d-1} <\infty$. Since $\theta$  is    positively $1$-homogeneous with  $\lbrace \theta =0 \rbrace = \lbrace 0 \rbrace$, we get $\theta(F) \ge c|F|$ for all $F \in \R^{d \times d}_{\rm sym}$ for some $c>0$. This implies 
$$\int_{J_v} |[v]| \, {\rm d}\mathcal{H}^{d-1} \le \sqrt{2} \int_{J_v} \big|[v]\odot \nu_v\big| \, {\rm d}\mathcal{H}^{d-1} \le \sqrt{2}/c \int_{J_v} \theta\big([v]\odot \nu_v\big) \, {\rm d}\mathcal{H}^{d-1} < + \infty. $$
Therefore, by Lemma \ref{lemma: technical} we get $v \in SBD(Q^\nu_1)$. In particular, the  symmetric distributional derivative $Ev$ of $v$ is a finite Radon measure,    and is  given by
\begin{align*}
Ev(B)=\int_{J_v\cap B} \big([v]\odot \nu_v\big) \, {\rm d}\mathcal{H}^{d-1}
\end{align*}
for all Borel sets $B \subset Q^\nu_1$. Since  $\{v\neq u_{i,j,\nu}\}\subset\!\subset Q^\nu_1$, we have   $Ev(Q^\nu_1)=Eu_{i,j,\nu}(Q^\nu_1)$. This along with Jensen's inequality and the fact that $\theta$ is positively $1$-homogeneous  and convex  yields
\begin{align*}
\int_{J_v} f(v^+,v^-,\nu_v)\, {\rm d}\mathcal{H}^{d-1}& = \int_{J_v}\theta\big([v]\odot\nu_v\big)\, {\rm d}\mathcal{H}^{d-1}\geq \theta\left( \int_{J_v} \big([v]\odot\nu_v\big) \, {\rm d}\mathcal{H}^{d-1} \right)\\&= \theta\left( \int_{J_{u_{i,j,\nu}}} \big( [u_{i,j,\nu}]\odot\nu \big)\, {\rm d}\mathcal{H}^{d-1} \right)= \mathcal{H}^{d-1}\big(J_{u_{i,j,\nu}} \cap Q^\nu_1\big)\, \theta\big((i-j)\odot\nu\big) \\&= f (i,j,\nu).
\end{align*}
This  shows that $f$ is  $BD$-elliptic and concludes the proof.
\end{proof}

\subsection{Independence of the traces at the jump}

 In this subsection, we consider functions which are independent of the traces at the jump set and only dependent on the normal, i.e.,
\begin{align}\label{eq: same}
 f(i,j,\nu) = \psi(\nu) \quad \quad \text{for all } (i,j,\nu) \in \R^d\times \R^d \times \BBB \R^d \quad  \text{with } i \neq j, \EEE
 \end{align} 
 for $\psi\colon\R^d \to [0,+\infty)$  even,  positively  $1$-homogeneous,  and convex. In this setting, it turns out that the notions of $BV$-ellipticity and $BD$-ellipticity  coincide. Recall that convexity of $\psi$ is a necessary condition for $BV$-ellipticity, see  \cite[Theorem 5.11, Theorem 5.14]{Ambrosio-Fusco-Pallara:2000}, and thus also necessary for $BD$-ellipticity. 

\begin{proposition}\label{prop: only nu} 
A function $f\colon \R^d\times \R^d \times \mathbb{S}^{d-1} \to [0,+\infty)$ of the form \eqref{eq: same} is $BD$-elliptic  if $\psi\colon\R^d \to [0,+\infty)$ is even,   positively  $1$-homogeneous,  and convex.
\end{proposition}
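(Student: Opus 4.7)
The plan is to reduce to a statement about a single linear functional and then invoke the divergence theorem. Since $\psi$ is even, positively $1$-homogeneous, convex, and nonnegative, it admits the dual representation
\[
\psi(\xi)=\sup_{z\in K}\langle z,\xi\rangle \qquad \text{for every }\xi\in\R^d,
\]
where $K\subset\R^d$ is a suitable compact, convex set. By evenness, $K$ can be chosen symmetric, so that $\psi(\nu_v)\geq |\langle z,\nu_v\rangle|$ for every $z\in K$. Consequently, in order to verify \eqref{eq: def-BD} it is enough to show that for every $v\in PR(Q^\nu_1)$ with $\{v\neq u_{i,j,\nu}\}\subset\!\subset Q^\nu_1$ and every $z\in K$ we have
\[
\int_{J_v} |\langle z,\nu_v\rangle|\,\mathrm{d}\mathcal{H}^{d-1}\ \geq\ |\langle z,\nu\rangle|,
\]
since choosing $z\in K$ that realizes $\psi(\nu)=\langle z,\nu\rangle$ and taking the supremum then yields the claim.

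The key step is to extract this inequality from a suitable application of the divergence theorem. Writing $v=\sum_k (Q_k\,x+b_k)\chi_{P_k}$ as in \eqref{eq: PR-def}, I would consider the set $\tilde P:=\{v=i\}$, namely the union of those $P_k$ for which $Q_k=0$ and $b_k=i$. This is a bounded set of finite perimeter, since $\sum_k\mathcal{H}^{d-1}(\partial^*P_k)<+\infty$. The assumption $\{v\neq u_{i,j,\nu}\}\subset\!\subset Q^\nu_1$ ensures that $\tilde P$ coincides with the upper half-cube $Q^\nu_1\cap\{\langle x,\nu\rangle>0\}$ in a neighborhood of $\partial Q^\nu_1$; hence, up to $\mathcal{H}^{d-1}$-null sets, $\partial^*\tilde P\cap\partial Q^\nu_1$ equals the upper half of $\partial Q^\nu_1$, and on this set $\nu_{\tilde P}$ agrees with the outer normal $\nu_{Q^\nu_1}$ of the cube. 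Applying the divergence theorem to $\tilde P$ with the constant vector field $z$ gives
\[
\int_{\partial^*\tilde P\cap Q^\nu_1}\langle z,\nu_{\tilde P}\rangle\,\mathrm{d}\mathcal{H}^{d-1}\ =\ -\int_{\partial Q^\nu_1\cap\{\langle x,\nu\rangle>0\}}\langle z,\nu_{Q^\nu_1}\rangle\,\mathrm{d}\mathcal{H}^{d-1}\ =\ -\langle z,\nu\rangle,
\]
where the last equality follows from a second application of the divergence theorem to the upper half-cube, combined with the fact that the cross section of $Q^\nu_1$ orthogonal to $\nu$ has $\mathcal{H}^{d-1}$-measure equal to $1$.

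To conclude, since $\partial^*\tilde P\cap Q^\nu_1\subset J_v$ and $\nu_v=\pm\nu_{\tilde P}$ on this set, the desired estimate follows from the elementary inequality
\[
\int_{J_v}|\langle z,\nu_v\rangle|\,\mathrm{d}\mathcal{H}^{d-1}\ \geq\ \int_{\partial^*\tilde P\cap Q^\nu_1}|\langle z,\nu_{\tilde P}\rangle|\,\mathrm{d}\mathcal{H}^{d-1}\ \geq\ \left|\int_{\partial^*\tilde P\cap Q^\nu_1}\langle z,\nu_{\tilde P}\rangle\,\mathrm{d}\mathcal{H}^{d-1}\right|\ =\ |\langle z,\nu\rangle|.
\]
The only mildly delicate point is the measure-theoretic identification of $\partial^*\tilde P\cap\partial Q^\nu_1$ with the upper half of $\partial Q^\nu_1$ up to $\mathcal{H}^{d-1}$-negligible sets (edges and corners of the cube), which however is a routine consequence of the fact that $v$ coincides with $u_{i,j,\nu}$ in a neighborhood of $\partial Q^\nu_1$.
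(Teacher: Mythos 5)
Your proof is correct, and it takes a genuinely different route from the one in the paper. The paper observes that, because the integrand $\psi(\nu)$ does not depend on the traces, one can replace any $v \in PR(Q^\nu_1)$ with $\lbrace v\neq u_{i,j,\nu}\rbrace\subset\!\subset Q^\nu_1$ by a piecewise constant function $u\in PC(Q^\nu_1)$ supported on the same Caccioppoli partition, so that $\mathcal{H}^{d-1}(J_u\triangle J_v)=0$; the conclusion then follows immediately from the known $BV$-ellipticity of $(i,j,\nu)\mapsto\psi(\nu)$ (see \cite[Example 5.23]{Ambrosio-Fusco-Pallara:2000}). You instead give a direct, self-contained argument: the support-function representation $\psi=\sup_{z\in K}\langle z,\cdot\rangle$ reduces the problem to the single-functional estimate $\int_{J_v}|\langle z,\nu_v\rangle|\,\mathrm{d}\mathcal{H}^{d-1}\geq|\langle z,\nu\rangle|$, which you then extract from the divergence theorem applied to the finite-perimeter set $\tilde P=\lbrace v=i\rbrace$ (which agrees with the upper half-cube near $\partial Q^\nu_1$). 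In effect you re-prove the relevant piece of $BV$-ellipticity from scratch, so your argument is longer but fully elementary; the paper's version is shorter because it offloads that step onto the cited reference. One small point worth stating explicitly in your write-up: the inclusion $\partial^*\tilde P\cap Q^\nu_1\subset J_v$ only holds up to an $\mathcal{H}^{d-1}$-null set, since an interface $\partial^* P_k\cap\partial^* P_l$ with $P_k\subset\tilde P$ and $Q_l\neq 0$ may meet the (at most $(d-2)$-dimensional) affine subspace $\lbrace Q_l x+b_l=i\rbrace$; this set is $\mathcal{H}^{d-1}$-negligible, so the final inequality is unaffected, but the identification is not pointwise.
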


\begin{proof}
Fix $(i,j,\nu)\in \R^d\times \R^d \times \mathbb{S}^{d-1}$, $i \neq j$, and consider $v\in PR(Q^\nu_1)$ with $\{v\neq u_{i,j,\nu}\}\subset\!\subset Q^\nu_1$, where $u_{i,j,\nu}$ is defined in (\ref{u_0}). In view of \eqref{eq: PR-def}, it is elementary to see that there exists $u \in PC(Q^\nu_1)$ with $\{u\neq u_{i,j,\nu}\}\subset\!\subset Q^\nu_1$ such that $\mathcal{H}^{d-1} (J_u \triangle J_v) = 0$. This along with the fact that $f$ is $BV$-elliptic (see \cite[Example 5.23]{Ambrosio-Fusco-Pallara:2000}) yields
\begin{align*}
\int_{J_v} f(v^+,v^-,\nu_v)\, {\rm d}\mathcal{H}^{d-1}& = \int_{J_u} f(u^+,u^-,\nu_u)\, {\rm d}\mathcal{H}^{d-1}  \ge f (i,j,\nu).
\end{align*}
This concludes the proof.
  \end{proof}

  By choosing $\psi \equiv 1$  on $\mathbb{S}^{d-1}$,  we re-derive the well-known fact that the Hausdorff-measure $\mathcal{H}^{d-1}$ is lower semicontinuous on $PR(\Omega)$. Moreover,  we briefly remark that lower semicontinuity of functionals with integrands of the form \eqref{eq: same} has already been addressed in  \cite[Corollary 5.5]{Crismale-Friedrich}   in the setting of $GSBD^p$ functions.  As an alternative proof of $BD$-ellipticity, we can check that functions of the above kind are symmetric jointly convex. \EEE

\begin{proposition}\label{prop: only nu2} 
A function $f\colon \R^d\times \R^d \times \BBB \R^d \EEE \to [c,+\infty)$, $c>0$, of the form \eqref{eq: same} is symmetric jointly convex  if $\psi\colon\R^d \to [0,+\infty)$ is even,   positively  $1$-homogeneous,  and convex.
\end{proposition}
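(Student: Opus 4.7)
The plan is to build, via convex duality, a countable family $(g_h)_h$ of bounded, uniformly continuous, conservative vector fields on $\R^d$ that realizes the pointwise representation of $f$ prescribed by Definition \ref{def: symm-conv}. The starting point is the support function formula
\[
\psi(\nu) = \sup_{w \in W_\psi} \langle w, \nu \rangle \qquad \text{for all } \nu \in \R^d,
\]
where $W_\psi := \partial \psi(0)$ is a compact, convex, symmetric subset of $\R^d$ (symmetric because $\psi$ is even). The lower bound $\psi \ge c > 0$ on $\mathbb{S}^{d-1}$, which is the content of the assumption ``$[c,+\infty)$'' once positive $1$-homogeneity is taken into account, ensures that $W_\psi$ contains a ball around the origin and hence has non-empty interior. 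This latter property will be crucial in the perturbation step below.

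For the construction, I will consider, for every $w \in W_\psi \cap \mathbb{Q}^d$ with $w \neq 0$ and every pair of rationals $\beta < \alpha$, the Lipschitz cutoff $\tau_{\alpha,\beta}\colon \R \to [0,1]$ that equals $0$ for $t \le \beta$, equals $1$ for $t \ge \alpha$, and interpolates linearly in between. Setting $e := w/|w|$, I introduce the two vector fields
\[
g^+_{w,\alpha,\beta}(x) := \tau_{\alpha,\beta}(\langle x, e\rangle)\, w, \qquad g^-_{w,\alpha,\beta}(x) := \bigl(1 - \tau_{\alpha,\beta}(\langle x, e\rangle)\bigr)\, w.
\]
Each such field is bounded by $|w|$, is Lipschitz (hence uniformly continuous), and is conservative, since its potential can be taken as a scalar function of $\langle x, e\rangle$ alone, as in Example \ref{ex: prototyp}. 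Enumerating this countable collection as $(g_h)_h$, it remains to verify the two inequalities in \eqref{symmetric joint convexity}.

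The upper bound $\sup_h \langle g_h(i) - g_h(j), \nu\rangle \le \psi(\nu)$ is immediate: for each $g_h$ the difference $g_h(i) - g_h(j)$ equals a scalar in $[-1,1]$ times $w$, so its pairing with $\nu$ is controlled by $|\langle w, \nu\rangle| \le \psi(\nu)$, where the last inequality uses $-w \in W_\psi$. For the reverse inequality, fix $(i,j,\nu)$ with $i \neq j$ and, by compactness, pick $w^* \in W_\psi$ with $\langle w^*, \nu\rangle = \psi(\nu)$. The delicate step is the reduction to $\langle i-j, w\rangle \neq 0$: the hyperplane $V_{i,j} := \{w \in \R^d : \langle i-j, w\rangle = 0\}$ is nowhere dense, and since $W_\psi$ has non-empty interior the set $(W_\psi \cap \mathbb{Q}^d) \setminus V_{i,j}$ is dense in $W_\psi$. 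I can therefore approximate $w^*$ by a rational $w \in W_\psi$ with $\langle i-j, w\rangle \neq 0$; depending on the sign of this scalar I select $g^{\pm}_{w,\alpha,\beta}$ with rationals $\alpha, \beta$ chosen strictly between $\langle j, e\rangle$ and $\langle i, e\rangle$, which makes the identity $g^{\pm}_{w,\alpha,\beta}(i) - g^{\pm}_{w,\alpha,\beta}(j) = w$ hold exactly. Continuity of $w \mapsto \langle w, \nu\rangle$ then forces the supremum to attain $\psi(\nu)$. The only real obstacle is precisely this perturbation argument, which is why the positivity assumption on $\psi$ (and hence the non-empty interior of $W_\psi$) cannot be dispensed with in the present approach.
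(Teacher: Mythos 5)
Your proof is correct and follows essentially the same approach as the paper's: represent $\psi$ as a support function of a compact (resp.\ open) convex symmetric set with nonempty interior, build a countable family of conservative fields of the form ``truncated scalar function of a linear projection, times a fixed direction in that set,'' bound above using the symmetry of the set, and bound below by using density of rationals together with a nondegeneracy argument to guarantee $\langle i-j, w\rangle \neq 0$. The only differences are cosmetic: the paper uses a V-shaped cutoff $\theta_h(y)=\min\{h|y|,1\}$ together with a translation parameter $p$, whereas you use a monotone cutoff $\tau_{\alpha,\beta}$ with two threshold parameters and split into $g^\pm$; the paper takes an open set $K$ in the support-function representation, whereas you take the compact $\partial\psi(0)$ and pass to its interior in the approximation step. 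Both are sound and carry identical content.
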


\begin{proof}
As $\psi$ is  even,  positively  $1$-homogeneous, convex, and bounded away from zero on  $\mathbb{S}^{d-1}$,  we find a bounded, open,  convex set  $K$  being symmetric with respect to the origin (i.e., $q\in K$ if and only if $-q\in K$) such that 
\begin{align}\label{eq: Krepr}
\psi(x)= \sup_{q\in K} \langle x,  q \rangle  = \sup_{q\in K} |\langle x,  q \rangle| \quad \quad \text{for all $x \in \R^d$}.
\end{align} 
Let us define the functions $g_{p,q,h}\colon\R^d \rightarrow \R^d$ by
\begin{align*}
g_{p,q,h}(i):= \theta_h\big( \langle i-p, q\rangle \big) \, q,
\end{align*}
where  $p\in \mathbb{Q}^d$, $q\in \mathbb{Q}^d \cap K$,  and for each  $h \in \N$ the function $\theta_h:\R \rightarrow [0,1]$ is given by 
\begin{align}\label{eq: tttheta}
\theta_h(y):=   \min\lbrace h|y|, 1 \rbrace.
\end{align}
Clearly, each $g_{p,q, h}$ is uniformly continuous, bounded, and conservative. Our goal is to prove that   for all $(i,j,\nu) \in \R^d\times \R^d\times \R^d$ with $i \neq j$ we have   
\begin{align}\label{eq: new-equ}
f(i,j,\nu) = \psi(\nu) = \sup_{p,q,h}\left\langle g_{p,q,h}(i)- g_{p,q,h}(j),\nu  \right\rangle.
\end{align}

\noindent \emph{Step 1: Proof of ``$\ge$''.}
For each   $(i,j,\nu) \in \R^d\times \R^d\times \R^d$ with $i \neq j$, we get immediately from \eqref{eq: Krepr} and \eqref{eq: tttheta} that 
\begin{align*}
\left\langle g_{p,q,h}(i)- g_{p,q,h}(j),\nu  \right\rangle \le |\langle q, \nu\rangle| \le  \sup_{q\in K} |\langle   q,\nu \rangle| = \psi(\nu) = f(i,j,\nu).
\end{align*}
By passing to the sup on the left hand side, we obtain  ``$\ge$'' in \eqref{eq: new-equ}.

\noindent \emph{Step 2: Proof of ``$\le$''.} Fix  $(i,j,\nu) \in \R^d\times \R^d\times \R^d$ with $i \neq j$ and let $\eps >0$. First, by \eqref{eq: Krepr} and the fact that $K$ is open we choose $q \in K \cap \mathbb{Q}^d$ such that $\langle q, \nu \rangle > \psi(\nu) -\eps$ and $\langle q, i-j\rangle \neq 0$. Then, for $h \in \N$ sufficiently large we have $|\langle q, i-j\rangle| \ge 1/h$. Therefore, for $p=j$ we get  by \eqref{eq: tttheta}
$$\left\langle g_{p,q,h}(i)- g_{p,q,h}(j),\nu  \right\rangle =   \big\langle\big(\theta_h\big( \langle i-j, q\rangle \big) - \theta_h(0) \big) \, q, \nu \big\rangle = \langle q, \nu \rangle > \psi(\nu) -\eps. $$    
By a density argument and the continuity of $\theta_h$ this still holds if $p$ is chosen in $\mathbb{Q}^d$. The arbitrariness of $\eps$ yields ``$\le$'' in \eqref{eq: new-equ}. This concludes the proof. 
\end{proof}

\EEE

\subsection{Functions with mild dependence on the traces}\label{sec:O}

In this subsection, we consider another class of $BD$-elliptic functions 
$$
f(i,j,\nu)= g(i-j)\quad\quad \text{for all }  (i,j,\nu)\in \R^d\times\R^d\times\mathbb{S}^{d-1},
$$
where $g\colon\R^d\rightarrow [0,+\infty)$ is a bounded, even function with 
\begin{align}\label{eq: alphabeta}
  \sup g \le 2\inf g.
\end{align}
Due to \eqref{eq: alphabeta}, we say that $f$ has only a \emph{mild dependence} on the traces at the jump.

\begin{proposition}
Under \eqref{eq: alphabeta}, the function $f$ is $BD$-elliptic.
\end{proposition}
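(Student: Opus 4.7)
The plan is to proceed by slicing along lines parallel to $\nu$. Fix $(i,j,\nu) \in \R^d \times \R^d \times \mathbb{S}^{d-1}$ with $i \neq j$ and let $v \in PR(Q^\nu_1)$ with $\{v \neq u_{i,j,\nu}\} \subset\subset Q^\nu_1$. Setting $\Pi := \nu^\perp$ and $Q' := \Pi \cap Q^\nu_1$ (so that $\mathcal{H}^{d-1}(Q') = 1$), for $y \in Q'$ introduce the line $\ell_y := \{y + t\nu : -\tfrac12 < t < \tfrac12\}$ and the slice $v_y(t) := v(y+t\nu)$. Since $\mathcal{H}^{d-1}(J_v) < \infty$ and $J_v$ is rectifiable, the standard coarea formula for rectifiable sets gives
\begin{equation*}
\int_{J_v} g([v])\, |\langle \nu_v, \nu\rangle|\, \mathrm{d}\mathcal{H}^{d-1} = \int_{Q'} \sum_{t \in J_{v_y}} g([v_y](t))\, \mathrm{d}\mathcal{H}^{d-1}(y),
\end{equation*}
with $J_{v_y} = J_v \cap \ell_y$ and $|[v_y](t)| = |[v](y+t\nu)|$ for $\mathcal{H}^{d-1}$-a.e.\ $y$. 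Since $|\langle \nu_v, \nu\rangle| \le 1$, it will suffice to show that $\sum_{t \in J_{v_y}} g([v_y](t)) \ge g(i-j)$ for $\mathcal{H}^{d-1}$-a.e.\ $y \in Q'$.

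The hypothesis $\{v \neq u_{i,j,\nu}\} \subset\subset Q^\nu_1$ provides $\delta > 0$ such that $v \equiv i$ on the top strip $\{\langle \cdot, \nu\rangle > \tfrac12 - \delta\} \cap Q^\nu_1$ and $v \equiv j$ on the symmetric bottom strip, so every slice $v_y$ equals $i$ near $t = \tfrac12$ and $j$ near $t = -\tfrac12$; in particular $N(y) := \#J_{v_y} \ge 1$. If $N(y) \ge 2$, evenness of $g$ combined with \eqref{eq: alphabeta} immediately yields
\begin{equation*}
\sum_{t \in J_{v_y}} g([v_y](t)) \ge 2\inf g \ge \sup g \ge g(i-j),
\end{equation*}
and we are done on such slices. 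The main obstacle lies in the case $N(y) = 1$.

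For a.e.\ $y$ with $N(y)=1$, the strategy is to argue that $\ell_y$ threads exactly two pieces of the Caccioppoli partition of $v$. The key observation is that any two distinct pieces in the canonical PR representation carry affine functions $Q_\alpha x + b_\alpha$ and $Q_\beta x + b_\beta$ whose difference $Q_\alpha - Q_\beta$ is a \emph{non-zero skew-symmetric} matrix, hence of even rank $\ge 2$; consequently the agreement set $\{x : (Q_\alpha - Q_\beta)x = b_\beta - b_\alpha\}$ is affine of codimension $\ge 2$, and the union of all such agreement sets is $\mathcal{H}^{d-1}$-negligible. By a Fubini argument, $\mathcal{H}^{d-1}$-a.e.\ line $\ell_y$ avoids these sets, so every crossing between distinct pieces along $\ell_y$ is genuinely a point of $J_{v_y}$; thus $N(y)=1$ entails that $\ell_y$ lies in exactly two pieces, separated by the single jump. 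On the upper piece one has $v(x) = Q_+ x + b_+$, and since $v_y \equiv i$ on the interval $(\tfrac12 - \delta, \tfrac12)$, restricting this identity to $\ell_y$ forces $Q_+\nu = 0$ and $Q_+ y + b_+ = i$, whence $v_y \equiv i$ on the entire upper sub-interval. The symmetric argument gives $v_y \equiv j$ on the lower sub-interval, so the unique jump satisfies $|[v_y]| = |i-j|$, and evenness of $g$ yields $g([v_y]) = g(i-j)$. Integrating the resulting pointwise bound over $Q'$ completes the proof of $BD$-ellipticity. The genuine subtlety here is the rank-$\ge 2$ argument: it is precisely the skew-symmetric structure underlying $PR(\Omega)$ (as opposed to $PC(\Omega)$) that rules out accidental continuities between distinct pieces and lets the single-jump slice behave as if $v$ were piecewise constant.
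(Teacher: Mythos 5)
Your proof is correct, but it takes a genuinely different route from the paper's, and the comparison is instructive. Both arguments slice along lines parallel to $\nu$ and exploit the dichotomy that each slice either carries at least two jumps, giving $\ge 2\inf g\ge\sup g\ge g(i-j)$, or exactly one. The difference lies in how the single-jump slices are handled. The paper never tries to identify the jump \emph{value} on such a slice. Instead it decomposes $J_u=\Gamma_1\cup\Gamma_2$ with $\Gamma_1=\partial^*P_1\cap\partial^*P_2$ (where $[u]=i-j$ exactly) and $\Gamma_2$ the remainder, lower bounds the density crudely by $\inf g$ on $\Gamma_2$, and uses a counting argument via slicing of the $BV$ functions $\chi_{P_1},\chi_{P_2}$ and the local structure of Caccioppoli partitions to show that any line missing $\Gamma_1$ must cross $\Gamma_2$ at two distinct points. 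You instead argue directly that a single-jump slice must be piecewise constant with jump exactly $i-j$, via a generic-position argument (lines parallel to $\nu$ generically avoid the agreement sets between affine pieces) and an explicit affine computation. Your key ingredient, that the agreement set between two distinct pieces of a $PR$ function has codimension at least $2$ because the rank of a nonzero skew-symmetric matrix is even and hence at least $2$, is genuinely using the $BD$ (rather than $BV$) structure; the paper relies on the same fact implicitly through the normalization $\mathcal{H}^{d-1}(J_u\,\triangle\,(\bigcup_k\partial^*P_k\setminus\partial\Omega))=0$ from \cite[Eq.~(3.2)]{FM}.

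Two small points deserve repair. First, the assertion that $Q_\alpha-Q_\beta$ is a \emph{non-zero} skew-symmetric matrix for distinct pieces is not always true: one may have $Q_\alpha=Q_\beta$ and $b_\alpha\ne b_\beta$. In that case the agreement set is empty, so the conclusion that the union of agreement sets is $\mathcal{H}^{d-1}$-negligible still stands, but you should state both cases. Second, "by a Fubini argument" is imprecise: Fubini alone gives only that a.e.\ line meets a set of $\mathcal{L}^d$-measure zero in $\mathcal{L}^1$-measure zero, not that it avoids it entirely. What you actually need is that each agreement set is an affine subspace of dimension $\le d-2$, so its orthogonal projection onto $\Pi_\nu$ has $\mathcal{H}^{d-1}$-measure zero, whence $\mathcal{H}^{d-1}$-a.e.\ $y\in\Pi_\nu$ gives a line $\ell_y$ missing it; taking the countable union over pairs of pieces preserves this. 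With those corrections your argument is complete.
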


\begin{proof}
 Fix  $(i,j,\nu)\in \R^d\times\R^d\times\mathbb{S}^{d-1}$ with $i \neq j$. 
 Consider $u \in PR(Q^\nu_1)$ such that $\{u\neq u_{i,j,\nu}\}\subset\!\subset Q^\nu_1$, where $u_{i,j,\nu}$ is the function  defined in (\ref{u_0}). In view of \eqref{eq: PR-def}, we can write $u  =  \sum\nolimits_{k\in \N} a_{Q_k,b_k} \chi_{P_k}$, where $P_1 = \lbrace u=i\rbrace$ and $P_2 = \lbrace u =j\rbrace$, and $\mathcal{H}^{d-1}(J_u \triangle (\bigcup_{k \ge 1} \partial^* P_k \setminus \partial Q^\nu_1   ) )=0 $. We define 
 $$\Gamma_1 = \partial^* P_1 \cap \partial^* P_2, \quad \quad \quad \Gamma_2 = \bigcup_{k \ge 3}\partial^* P_k \setminus \Gamma_1.$$
  The local structure of Caccioppoli partitions (see \cite[Theorem 4.17]{Ambrosio-Fusco-Pallara:2000}) and the fact that $\partial^* P_k \cap \partial Q^\nu_1  = \emptyset$ for $k \ge 3$ imply that $J_u = \Gamma_1 \cup \Gamma_2$ up to a set of $\mathcal{H}^{d-1}$-negligible measure.   We  now  introduce some more notation. We define $\Pi_\nu = \lbrace x\in \R^d \colon \langle x,\nu \rangle = 0 \rbrace$ and, for $B \subset \R^d$, we let $B^\nu_y = \lbrace t\in\R\colon y + t\nu \in B \rbrace$ for each $y \in \Pi_\nu$.   We decompose the set $\Pi_\nu \cap Q^\nu_1$ into the sets 
$$T_1 =  \big\{ y \in \Pi_\nu \cap Q^\nu_1\colon (\Gamma_1)^\nu_y \neq \emptyset\big\}, \quad \quad T_2 = (\Pi_\nu \cap Q_1^\nu) \setminus T_1.$$
 As $\{u\neq u_{i,j,\nu}\}\subset\!\subset Q^\nu_1$, we find that each line $y + \R \nu$, $y \in \Pi_\nu \cap Q^\nu_1$, intersects $P_1$ and $P_2$ on a set of positive Lebesgue measure. Thus,  for $\mathcal{H}^{d-1}$-a.e. $y \in \Pi_\nu \cap Q^\nu_1$,  by  slicing properties \cite[Theorem~3.108]{Ambrosio-Fusco-Pallara:2000} for the $BV$ functions $\chi_{P_1}$ and $\chi_{P_2}$, we get  $\mathcal{H}^0\big((y + \R \nu) \cap \partial^* P_k\big) \ge 1$ for $k=1,2$. Then, the local structure of Caccioppoli partitions (see \cite[Theorem 4.17]{Ambrosio-Fusco-Pallara:2000}) implies that  for $\mathcal{H}^{d-1}$-a.e. $y \in T_2$ there exist other components $P_k, P_l$ for some $k,l \ge 3$ (possibly $k=l$) such that the line  $y + \R \nu$  intersects $\partial^* P_1 \cap \partial^* P_k$ and $\partial^* P_2 \cap \partial^* P_l$. Thus, there holds  
\begin{align}\label{eq: later-bound}
\mathcal{H}^0\big( (\Gamma_2)^\nu_y \big) \ge 2 \quad \quad \text{for $\mathcal{H}^{d-1}$-a.e. $y \in T_2$}.
\end{align}
 As $u = i$ on $P_1$ and $u = j$ on $P_2$, we   obtain
\begin{align*}
\mathcal{F}(u)&= \int_{\Gamma_1} g([u])\,{\rm d}\mathcal{H}^{d-1} + \int_{\Gamma_2} g([u])\,{\rm d}\mathcal{H}^{d-1} \ge g(i-j) \mathcal{H}^{d-1}(\Gamma_1) + \inf g \, \mathcal{H}^{d-1}(\Gamma_2).  
\end{align*}
 By $\nu_{\Gamma_1}$ and $\nu_{\Gamma_2}$ we denote unit normals to the rectifiable sets $\Gamma_1$ and $\Gamma_2$, respectively.  By the area formula (cf.\ e.g.\ \cite[(12.4) in Section~12]{Sim84}) and by \eqref{eq: later-bound} there holds
\begin{align*}
\mathcal{F}(u)&\ge g(i-j) \int_{\Gamma_1} |\langle \nu,\nu_{\Gamma_1} \rangle|\, {\rm d} \mathcal{H}^{d-1} + \inf g \, \int_{\Gamma_2} |\langle \nu,\nu_{\Gamma_2} \rangle|\, {\rm d} \mathcal{H}^{d-1} \\
&  \ge  g(i-j) \int_{T_1}  \,  \mathcal{H}^0\big( (\Gamma_1)^\nu_y \big)   \,    {\rm d} \mathcal{H}^{d-1}(y)   + \inf g \, \int_{T_2}    \mathcal{H}^0\big( (\Gamma_2)^\nu_y \big)     \,  {\rm d} \mathcal{H}^{d-1}(y) \\
& \ge g(i-j)\, \mathcal{H}^{d-1}(T_1) + 2\inf g \, \mathcal{H}^{d-1}(T_2).
\end{align*}
By \eqref{eq: alphabeta} and the fact that $\mathcal{H}^{d-1}(T_1) + \mathcal{H}^{d-1}(T_2) = 1$, we conclude $\mathcal{F}(u) \ge g(i-j) = f(i,j,\nu)$. 
\end{proof}

\subsection{$BV$-elliptic, but not $BD$-elliptic functions}\label{sec: counterexamples}

In this subsection, we provide two examples of $BV$-elliptic functions which are not $BD$-elliptic.

\begin{example}[Anisotropy in jump normal]\label{counterexample}
Consider functions $f\colon\R^d\times\R^d\times  \mathbb{S}^{d-1}  \rightarrow [0,+\infty)$ of the form 
\begin{align*}
f(i,j,\nu):= |i-j|\,\psi(\nu),
\end{align*}
where $\psi:\R^d\rightarrow [0,+\infty)$ is convex, even, and   positively  $1$-homogeneous. Recall from \eqref{eq: BV class} that densities of this form are $BV$-elliptic.  We show that $f$ is in general not $BD$-elliptic if $\psi$ is anisotropic. To see this, we let  $d=2$ for simplicity and suppose that $\nu = e_2$, where $\lbrace e_1, e_2\rbrace$ denotes the standard orthonormal basis of $\R^2$. Assume that $\psi(e_1) = \eps$ and $\psi(e_2) = 1$ for some $\eps>0$ small to be specified later. \BBB For notational simplicity, we consider functions defined on $Q^{e_2}_6$. Let $i=(0,0)$, \EEE $j=(2\lambda,2\lambda)$ for $\lambda >0$, and let $u\in PR(Q_6^{e_2})$ be defined by  
\begin{align*}
u(x):=
\begin{cases}
 u_{i,j,e_2}(x)  \quad & \text{on } Q^{e_2}_6\setminus Q^{e_2}_{2},\\
  a_{Q,b}(x)  \quad & \text{on }Q^{ e_2}_{2},
\end{cases}
\end{align*}
where $ Q  =    \lambda (e_1 \otimes e_2  - e_2 \otimes e_1)  \in \R^{2\times 2}_{\rm skew}$ and $b = (\lambda,\lambda)\in\R^2$. The affine function is chosen in such a way that the set of discontinuities of the scalar functions $u_1 = \langle u, e_1 \rangle $ and $u_2=\langle u, e_2\rangle $ is the one represented in Figure \ref{fig:counterexample1}.
\begin{figure}[!htb]
\centering
\def\svgwidth{13cm}
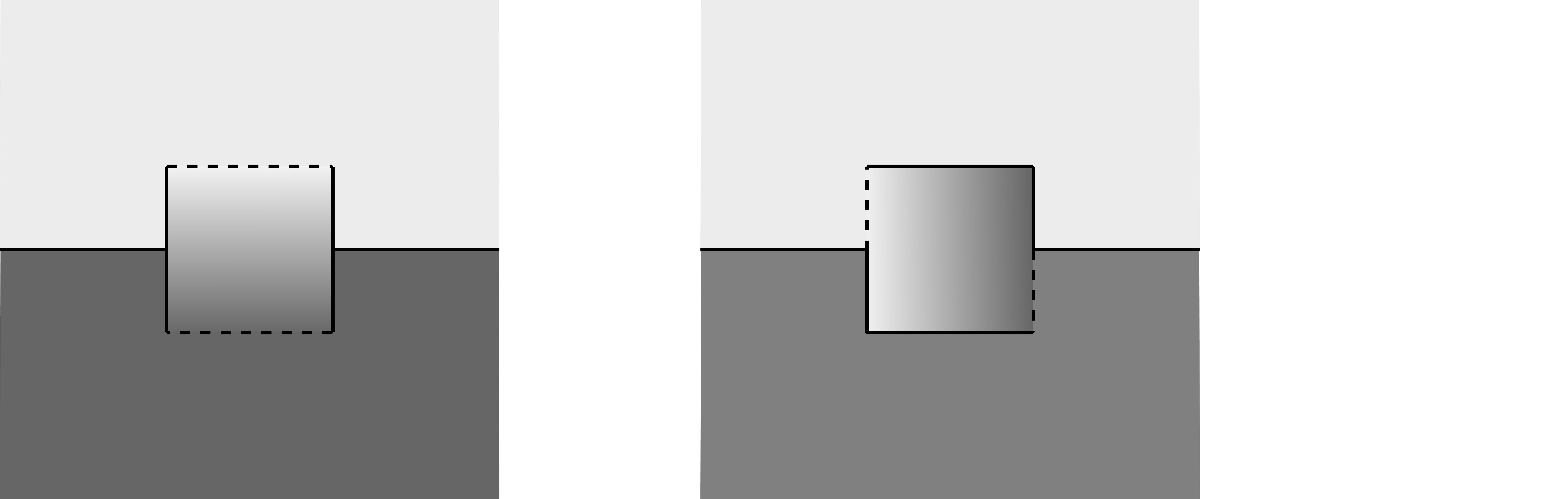
\caption{The lines (not dashed) in the above pictures are a pictorial representation of the set of discontinuities of the function $u_1$ (on the left) and of the function $u_2$ (on the right). The dashed bold lines correspond to points where the function is not discontinuous.}
\label{fig:counterexample1}
\end{figure}
We define 
\begin{align}\label{eq: ortho definition} 
J_u^\|:= \{x\in J_u\colon\,|\langle \nu_u(x),  e_2  \rangle |= 1  \}, \ \ \ \ \ \ \ J_u^\perp:= \{ x\in J_u\colon\, \langle \nu_u(x),  e_2\rangle=0  \}.
\end{align}
Up to a set of negligible $\mathcal{H}^1$-measure, we can write $J_u^{\|}$ as the union of four pairwise disjoint sets $\Gamma_k$  with $\mathcal{H}^1(\Gamma_k)=2$ for $k=1,\dots,4$, \BBB namely  
\begin{align*} 
&\Gamma_1= [-3,-1] \times \lbrace 0 \rbrace, \ \ \ \ \ \ \ \Gamma_2=  \lbrace -1 \rbrace \times [-1,1],  \ \ \ \ \ \ \ \Gamma_3=  \lbrace 1 \rbrace \times [-1,1], \ \ \ \ \ \ \ \Gamma_4= [1,3] \times \lbrace 0 \rbrace, 
\end{align*}
see Figure \ref{fig:counterexample1}. \EEE Then, by $\psi(e_1) = \eps$ and $\psi(e_2) = 1$ we get 
\begin{align*}
\mathcal{F}(u,Q_6^{e_2})&=\int_{J_u}|[u]|\psi(\nu_u)\,{\rm d}\mathcal{H}^{1}=\int_{J_u^\perp}|[u]|\psi(e_1) \,{\rm d}\mathcal{H}^{1} + \int_{J_u^\|}|[u]| \psi(e_2)\,{\rm d}\mathcal{H}^{1} \le C\lambda \eps +  \int_{J_u^\|}|[u]| \,{\rm d}\mathcal{H}^{1}
\end{align*} 
for a universal $C>0$. Since $|[u]| = 2\sqrt{2}\lambda$ on $\Gamma_1 \cup \Gamma_4$, as well as  $a_{Q,b}(t,-1) = \lambda(1-t)e_2$ and $a_{Q,b}(t,1) = 2\lambda e_1 + \lambda(1-t)e_2$ for $t \in (-1,1)$, a direct computation shows 
\begin{align*}
 \int_{J_u^\|}|[u]| \,{\rm d}\mathcal{H}^{1} & =   \int_{\Gamma_1 \cup \Gamma_4}|[u]| \,{\rm d}\mathcal{H}^{1} +  \int_{\Gamma_2 \cup \Gamma_3}|[u]| \,{\rm d}\mathcal{H}^{1} = 4 \cdot 2\sqrt{2}\lambda +  \int_{\Gamma_2 \cup \Gamma_3}|[u]| \,{\rm d}\mathcal{H}^{1} \\
 & =  8\sqrt{2}\lambda +  \int_{-1}^{1}  \lambda(1-t)   \, {\rm d}t  +  \int_{-1}^{1}  \lambda(1+t)   \, {\rm d}t = (8\sqrt{2} + 4)\lambda. 
\end{align*}
Thus, $\mathcal{F}(u,Q_6^{e_2}) \le C\lambda\eps + (8\sqrt{2} + 4) \lambda< 12\sqrt{2}\lambda$ for $\eps$ small enough.  Observing  that  $\mathcal{F}(u_{i,j,e_2},Q_6^{e_2}) = 6 \cdot 2\sqrt{2}\lambda = 12\sqrt{2}\lambda$, we find $\mathcal{F}(u,Q_6^{e_2}) < \mathcal{F}(u_{i,j,e_2},Q_6^{e_2})$. This shows that $f$ is not $BD$-elliptic and concludes the example.
\end{example}

\begin{example}[Anisotropy in jump direction]\label{counterexample2}
Consider functions $f:\R^d\times\R^d\times \mathbb{S}^{d-1} \rightarrow [0,+\infty)$ of the form 
\begin{align*}
f(i,j,\nu):= \psi(i-j),
\end{align*}
where $\psi:\R^d\rightarrow [0,+\infty)$ is a norm on $\R^d$.  Recall from \eqref{eq: BV class} that densities of this form a $BV$-elliptic.  We show that $f$ is in general not $BD$-elliptic if $\psi$ is anisotropic. We again consider $d=2$,  $\nu = e_2$, and define  $\psi(x_1,x_2)=\sqrt{x_1^2 + \eps x_2^2}$ for $(x_1,x_2) \in \R^2$ for some $\eps>0$ small to be specified later.  Let $i=(0,0)$ and $j=(2\lambda,2\lambda)$ for $\lambda >0$. For  $\delta = \eps^{1/4}  >0$,  we let $u\in PR(Q_6^{e_2})$ be given by  
\begin{align*}
u(x):=
\begin{cases}
 u_{i,j,e_2}(x)  \quad & \text{on } Q^{e_2}_6\setminus R_\delta,\\
 a_{Q,b}(x)  \quad & \text{on } R_\delta,
\end{cases}
\end{align*}
where $R_\delta := (-1,1) \times (-\delta,\delta)$,   $Q =    \lambda/\delta (e_1 \otimes e_2  - e_2 \otimes e_1)  \in \R^{2\times 2}_{\rm skew}$ and  $b = (\lambda,\lambda/\delta)\in\R^2$.  We define $J_u^\|$ and $J_u^\perp$ as in \eqref{eq: ortho definition} and again note that, up to a set of negligible $\mathcal{H}^1$-measure,  $J_u^{\|}$ consists of four pairwise disjoint sets $\Gamma_k$  with $\mathcal{H}^1(\Gamma_k)=2$ for $k=1,\dots,4$. Since $\Vert \langle a_{Q,b}, e_1 \rangle \Vert_{L^\infty(R_\delta)} \le C\lambda$ and $\Vert  \langle a_{Q,b},  e_2 \rangle \Vert_{L^\infty(R_\delta)} \le C\lambda/\delta$   for a universal $C>0$, we get   $|[u_1]|\le C\lambda$ and  $|[u_2]|\le C\lambda/\delta$  on $J_u^\perp$,  where $u_k := \langle u, e_k \rangle$, $k=1,2$.   Therefore,   we obtain
\begin{align}\label{eq: int step}
\mathcal{F}(u,Q_6^{e_2})&=\int_{J_u} \psi([u])  \,{\rm d}\mathcal{H}^{1}=\int_{J_u^\perp}  \psi([u])  \,{\rm d}\mathcal{H}^{1} + \int_{J_u^\|}  \psi([u])  \,{\rm d}\mathcal{H}^{1} \\& \le C\mathcal{H}^1(J_u^\bot)   \sqrt{\lambda^2 + \eps (\lambda/\delta)^2}  +  \int_{J_u^\|}  \psi([u])  \,{\rm d}\mathcal{H}^{1} \le C\lambda\delta\sqrt{1+\eps/\delta^2} +  \int_{J_u^\|}  \psi([u])  \,{\rm d}\mathcal{H}^{1}, \notag
\end{align} 
 where we have also used that $\mathcal{H}^1(J_u^\bot)\le 4  \delta $.  Since $ \psi([u])  = 2\sqrt{1+\eps}\lambda$ on $\Gamma_1 \cup \Gamma_4$, as well as  $a_{Q,b}(t, -\delta)  = \frac{\lambda}{\delta}(1-t)e_2$ and $a_{Q,b}(t, \delta)  = 2\lambda e_1 + \frac{\lambda}{\delta}(1-t)e_2$ for $t \in (-1,1)$, we compute 
\begin{align*}
 \int_{J_u^\|}  \psi([u])  \,{\rm d}\mathcal{H}^{1} & =   \int_{\Gamma_1 \cup \Gamma_4}  \psi([u]) \,{\rm d}\mathcal{H}^{1} +  \int_{\Gamma_2 \cup \Gamma_3}  \psi([u]) \,{\rm d}\mathcal{H}^{1} = 4 \cdot 2\sqrt{1+\eps}\lambda +  \int_{\Gamma_2 \cup \Gamma_3}  \psi([u]) \,{\rm d}\mathcal{H}^{1} \\
 & =  8\sqrt{1+\eps}\lambda +  \int_{-1}^{1}   \sqrt{\eps}  \frac{\lambda}{\delta}(1-t)   \, {\rm d}t  +  \int_{-1}^{1}  \sqrt{\eps}   \Big|2\lambda -   \frac{\lambda}{\delta}(1-t) \Big|    \, {\rm d}t \le 8\lambda + C\lambda \sqrt{\eps}  /\delta. 
\end{align*}
In view of \eqref{eq: int step} and   $\delta = \eps^{1/4}$,  we thus get $\mathcal{F}(u,Q_6^{e_2}) \le 8\lambda + C\lambda   \eps^{1/4} $. By choosing $\eps$ sufficiently small, we find $\mathcal{F}(u,Q_6^{e_2}) < 6\cdot 2\lambda \sqrt{1+\eps} =  \mathcal{F}(u_{i,j,e_2},Q_6^{e_2})$. This shows that $f$ is not $BD$-elliptic and concludes the example.
\end{example}

\BBB We close this subsection by noting that $BV$-elliptic functions in one dimension induce $BD$-elliptic functions in the following sense: given a $BV$-elliptic function $f:\R\times\R\times \R\mapsto [0,+\infty)$ which is positively $1$-homogeneous in the third variable, we observe that the function
$$\bar{f}(i,j,\nu) := \sup_{\xi \in \mathbb{S}^{d-1}} f\big( \langle i, \xi \rangle,  \langle j, \xi \rangle, \langle \nu, \xi \rangle  \big)$$
is $BD$-elliptic. This follows directly from the definition of $BV$- and $BD$-ellipticity along with a slicing argument and the fact that, given $u \in PR(Q^\nu_1)$, the function $t \mapsto \langle  u(t \xi), \xi \rangle$ is piecewise constant. Lower semicontinuity of such densities has already been addressed in \cite{GZ3}. \EEE

\section{Lower semicontinuity in $GSBD^p$ for symmetric jointly convex functions}\label{sec: GSBD}

This section is devoted to a lower semicontinuity result for surface integrals in $GSBD^p(\Omega)$, $p>1$, where the integrands are symmetric jointly convex functions, see Definition \ref{def: symm-conv}. We also discuss well-posedness of certain minimization problems.  We refer to \cite{DM} for the definition and the properties of this function space.

\begin{theorem}[Lower semicontinuity of surface integrals in $GSBD^p$]\label{thm: GSBD LSC}
Let $f:\R^d\times\R^d\times \R^d \rightarrow [0,+\infty)$  be a symmetric jointly convex function. Then, for every sequence $(u_k)_k\subset GSBD^p(\Omega)$, $p>1$, converging in measure to $u\in GSBD^p(\Omega)$, and satisfying the condition
\begin{align}\label{eq: e(u_k) unif bdd}
\sup_{k\in\mathbb{N}} \big( \| e(u_k) \|_{L^p(\Omega)}  + \mathcal{H}^{d-1}(J_{u_k}) \big) <+\infty,
\end{align}
we have that 
\begin{align}\label{eq: GSBD LSC}
\int_{J_u} f(u^+,u^-,\nu_u)\, {\rm d}\mathcal{H}^{d-1} \leq \liminf_{k\to \infty} \int_{J_{u_k}} f(u_k^+,u_k^-, \nu_{u_k})\, {\rm d}\mathcal{H}^{d-1}.
\end{align}
\end{theorem}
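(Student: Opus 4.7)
I may assume $\liminf_k \int_{J_{u_k}} f\, d\mathcal{H}^{d-1} < +\infty$, so that after passing to a (not relabelled) subsequence the $\liminf$ becomes a limit and the Radon measures $\mu_k := f(u_k^+, u_k^-, \nu_{u_k})\, \mathcal{H}^{d-1}\mres J_{u_k}$ are uniformly bounded in total mass on $\Omega$; up to a further subsequence, $\mu_k \rightharpoonup^* \mu$ weakly-$*$ for a finite positive Radon measure $\mu$ on $\Omega$. The bound \eqref{eq: e(u_k) unif bdd} together with convergence in measure yields, via \cite[Theorem 11.3]{DM}, weak $L^p$-convergence $e(u_k) \rightharpoonup e(u)$. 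By Remark \ref{rem: symm-conv}, in the representation $f(i,j,\nu) = \sup_h \langle g_h(i) - g_h(j), \nu\rangle$ I may take each $g_h \in C^1(\R^d;\R^d) \cap W^{1,\infty}(\R^d;\R^d)$. Since $\mu(\Omega) \le \liminf_k \mu_k(\Omega)$ by weak-$*$ lower semicontinuity on open sets, the theorem reduces to showing $\mu(\Omega) \ge \int_{J_u} f(u^+, u^-, \nu_u)\, d\mathcal{H}^{d-1}$.

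The central ingredient is the integration-by-parts identity (Lemma \ref{lemma: BD-int-by-parts} of the introduction): for every conservative $g \in C^1(\R^d;\R^d) \cap W^{1,\infty}$, every $v \in GSBD^p(\Omega)$, and every $\varphi \in C^1_c(\Omega)$,
\begin{equation}\label{eq: IBP-sketch}
\int_{J_v} \varphi \, \langle g(v^+) - g(v^-), \nu_v\rangle \, d\mathcal{H}^{d-1} = -\int_\Omega g(v) \cdot \nabla\varphi \, dx - \int_\Omega \varphi \, \nabla g(v) : e(v) \, dx.
\end{equation}
The essential point is that conservativeness of $g$ renders $\nabla g$ pointwise symmetric, so that the formal Jacobian product $\nabla g(v):\nabla v$ collapses to $\nabla g(v):e(v)$, which is well-defined in $GSBD^p$ although $\nabla v$ is not. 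A slicing argument---the 1D $SBV$ chain rule applied on $\xi_k$-sections in $d$ orthonormal directions, with the resolution of identity $\sum_k \xi_k \otimes \xi_k = {\rm Id}$ together with symmetry of $\nabla g$ reassembling both the bulk term $\nabla g(v):e(v)$ and the jump term $\langle g(v^+)-g(v^-),\nu_v\rangle$, after the standard truncation reducing to $SBD^p$---then yields \eqref{eq: IBP-sketch}.

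With \eqref{eq: IBP-sketch} in hand, fix $h$ and a nonnegative $\varphi \in C^1_c(\Omega)$. Since $f \ge \langle g_h(i) - g_h(j), \nu\rangle$, applying \eqref{eq: IBP-sketch} to each $u_k$ gives
\[
\int_\Omega \varphi\, d\mu_k \,\ge\, -\int_\Omega g_h(u_k)\cdot\nabla\varphi\, dx \,-\, \int_\Omega \varphi\, \nabla g_h(u_k) : e(u_k)\, dx.
\]
Since $g_h, \nabla g_h$ are bounded and continuous and $u_k \to u$ in measure, dominated convergence yields strong $L^q$-convergence of $g_h(u_k)$ and $\nabla g_h(u_k)$ for every $q<\infty$; combined with $e(u_k) \rightharpoonup e(u)$ in $L^p$, the right-hand side converges to $-\int_\Omega g_h(u)\cdot\nabla\varphi\, dx - \int_\Omega \varphi\, \nabla g_h(u):e(u)\, dx$, which by \eqref{eq: IBP-sketch} applied to $u$ equals $\int_{J_u} \varphi\, \langle g_h(u^+) - g_h(u^-), \nu_u\rangle\, d\mathcal{H}^{d-1}$. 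Passing to the weak-$*$ limit on the left and invoking $\mu \ge 0$, I obtain
\[
\mu(A) \ge \int_{J_u\cap A} \big(\langle g_h(u^+) - g_h(u^-), \nu_u\rangle\big)^+\, d\mathcal{H}^{d-1} \quad \text{for every open } A \subset \Omega.
\]
Applying Lemma \ref{lem: localization} to the superadditive $\Lambda(A):=\mu(A)$, to $\lambda := \mathcal{H}^{d-1}\mres J_u$, and to the nonnegative Borel functions $\varphi_h := (\langle g_h(u^+) - g_h(u^-), \nu_u\rangle)^+$, and using the identity $\sup_h (\langle g_h\rangle)^+ = (\sup_h \langle g_h\rangle)^+ = f^+ = f$ (which rests on $f \ge 0$), I conclude $\mu(\Omega) \ge \int_{J_u} f(u^+, u^-, \nu_u)\, d\mathcal{H}^{d-1}$, completing the proof. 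The main obstacle is \eqref{eq: IBP-sketch} itself, since $GSBD^p$ admits no general chain rule and $u$ need not be in $L^1_{loc}$; conservativeness of the $g_h$ is precisely what allows the formal $\nabla g(u):\nabla u$ to reduce to the well-defined $\nabla g(u):e(u)$, and thereby rescues the argument despite the weak regularity available.
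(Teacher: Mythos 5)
Your overall strategy coincides with the paper's: both proofs rest on the integration-by-parts formula of Lemma \ref{lemma: BD-int-by-parts} (valid precisely because the fields $g_h$ are conservative, so that $\nabla g_h(u):e(u)$ is well-defined and replaces the ill-posed $\nabla g_h(u):\nabla u$), followed by the localization Lemma \ref{lem: localization} applied to the Borel functions $\big(\langle g_h(u^+)-g_h(u^-),\nu_u\rangle\big)^+$. Your reformulation through a weak-$*$ limit measure $\mu$ of $\mu_k=f(u_k^+,u_k^-,\nu_{u_k})\mathcal{H}^{d-1}\mres J_{u_k}$ is essentially equivalent to the paper's device of the superadditive set function $\Lambda_\eps(A)$.

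There is, however, a genuine gap in the passage to the limit in the bulk term $\int\varphi\,\nabla g_h(u_k):e(u_k)\,dx$. You invoke \cite[Theorem 11.3]{DM} to obtain $e(u_k)\rightharpoonup e(u)$ weakly in $L^p$, but that compactness theorem requires, as a standing hypothesis, a uniform coercive bound of the type $\sup_k\int_\Omega\psi(|u_k|)\,dx<+\infty$, which is not among the assumptions of Theorem \ref{thm: GSBD LSC}. Convergence in measure to a given $u\in GSBD^p(\Omega)$ gives a weak-$L^p$ limit $e$ of $e(u_k)$ along a subsequence, but the identification $e=e(u)$ is not granted by the cited statement and is not trivial in $GSBD$ (where the distributional symmetric derivative need not be a measure). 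Without this identification, your chain of inequalities produces the spurious remainder $\int\varphi\,\nabla g_h(u):\big(e(u)-e\big)\,dx$, which has no sign. The paper sidesteps this entirely: since $p>1$, the sequence $(|e(u_k)|)$ is equiintegrable, so one localizes the test function $\varphi$ to a small open set $B$ containing all the jump sets, on which the bulk terms are uniformly $O(\eps)$; the remaining term $-\int\langle G(u_k),\nabla\varphi\rangle\,dx$ is then handled by plain dominated convergence (boundedness of $G$ and convergence of $u_k$ in measure), without ever using weak convergence of $e(u_k)$. Either replace your citation by a self-contained slicing argument identifying the weak limit of $e(u_k)$ with $e(u)$ (inspecting the proof, not just the statement, of \cite[Theorem 11.3]{DM}), or adopt the equiintegrability route.

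A minor imprecision: Remark \ref{rem: symm-conv} does not give an exact representation $f=\sup_h\langle g_h(i)-g_h(j),\nu\rangle$ with $g_h\in C^1\cap W^{1,\infty}$; it only gives, for each $\eps>0$, fields $g_h^\eps\in C^1\cap W^{1,\infty}$ with $f-\eps\le\sup_h\langle g_h^\eps(i)-g_h^\eps(j),\nu\rangle\le f+\eps$. You must therefore carry the $\eps$ through the localization step and send $\eps\to0$ at the very end, using the uniform bound $\sup_k\mathcal{H}^{d-1}(J_{u_k})<+\infty$ to control the resulting error $\eps\sup_k\mathcal{H}^{d-1}(J_{u_k})$ --- exactly as done in Step 1 of the paper's proof.
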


For various examples of symmetric jointly convex integrands we refer the reader to Section \ref{sec: examples}.  Restricting to the space of $SBD$-functions, the above  result  also holds for symmetric biconvex functions introduced in Subsection \ref{sec: biconvex}, see Theorem \ref{thm: biconv} below for  details.  As a consequence of the above result, we get that the following minimization problems are well-posed.

\begin{theorem}[Existence of minimizers]\label{thm: min}
Let $\Omega \subset \R^d$ be open and bounded and let $c>0$. Let $W\colon \R^{d\times d}_{\rm sym} \to [0,+\infty)$ be convex with $W(F) \ge c|F|^p$ for all $ F \in  \R^{d \times d}_{\rm sym}$ for some $p>1$. Let $f\colon \R^d \times \R^d \times \mathbb{S}^{d-1} \to [c,+\infty)$ be symmetric jointly convex and let $\Psi: [0,+\infty) \to [0,+\infty)$ be continuous such that $\lim_{t \to \infty} \Psi(t) = + \infty$. Then, the functional
$$\mathcal{E}(u) :=  \int_\Omega W\big(e (u) \big) \, {\rm d}x + \int_{J_u} f(u^+,u^-,\nu) \, {\rm d}\mathcal{H}^{d-1} + \int_\Omega \Psi(|u|) \, {\rm d}x \quad \quad \text{for all } u \in GSBD^p(\Omega)$$    
has a minimizer in $GSBD^p(\Omega)$.
\end{theorem}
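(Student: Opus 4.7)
The plan is to apply the direct method of the calculus of variations, with the compactness step relying on \cite[Theorem~11.3]{DM} and the three lower-semicontinuity inequalities being handled term by term. First I would take a minimizing sequence $(u_k)_k \subset GSBD^p(\Omega)$. Since $u\equiv 0$ is admissible with $\mathcal{E}(0) \le (W(0) + \Psi(0))\mathcal{L}^d(\Omega) < +\infty$, the infimum is finite, so we may assume $\mathcal{E}(u_k) \le C$ for all $k$. The coercivity bound $W(F) \ge c|F|^p$ together with $f \ge c > 0$ then yield
\[
\sup_{k\in\mathbb{N}} \Bigl( \Vert e(u_k) \Vert_{L^p(\Omega)} + \mathcal{H}^{d-1}(J_{u_k}) + \int_\Omega \Psi(|u_k|)\,\mathrm{d}x \Bigr) < +\infty.
\]
The divergence $\Psi(t) \to +\infty$ as $t\to+\infty$ together with the third bound prevents escape of mass to infinity: for every $M>0$ we have $\mathcal{L}^d(\{|u_k|>M\}) \le (\inf_{t\ge M}\Psi(t))^{-1}\sup_k \int_\Omega \Psi(|u_k|)\,\mathrm{d}x$, which tends to $0$ uniformly in $k$ as $M \to \infty$.

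Next I would invoke the $GSBD$-compactness theorem \cite[Theorem~11.3]{DM}. In its general form, this theorem produces (up to a subsequence) an exceptional set on which $|u_k| \to +\infty$, but the tightness estimate above forces this set to be $\mathcal{L}^d$-negligible. Hence I obtain a subsequence (not relabeled) and a function $u \in GSBD^p(\Omega)$ such that $u_k \to u$ in measure on all of $\Omega$ and $e(u_k) \rightharpoonup e(u)$ weakly in $L^p(\Omega;\mathbb{R}^{d\times d}_{\rm sym})$.

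Finally, I would assemble three lower-semicontinuity estimates. For the bulk term, convexity of $W$ combined with weak convergence of $e(u_k)$ in $L^p$ yields $\int_\Omega W(e(u))\,\mathrm{d}x \le \liminf_k \int_\Omega W(e(u_k))\,\mathrm{d}x$ by the standard lower-semicontinuity result for convex integrands. The surface term is handled directly by Theorem~\ref{thm: GSBD LSC}, whose hypotheses \eqref{eq: e(u_k) unif bdd} are satisfied by construction since $f$ is symmetric jointly convex. For the lower-order term, after passing to a further subsequence so that $u_k \to u$ pointwise almost everywhere, continuity of $\Psi$ combined with Fatou's lemma yields $\int_\Omega \Psi(|u|)\,\mathrm{d}x \le \liminf_k \int_\Omega \Psi(|u_k|)\,\mathrm{d}x$. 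Summing the three inequalities gives $\mathcal{E}(u) \le \liminf_k \mathcal{E}(u_k) = \inf \mathcal{E}$, so $u$ is a minimizer. The only genuinely delicate point is ruling out the escape-to-infinity scenario in the compactness step, which is precisely the purpose of the coercive weight $\Psi$; once compactness on all of $\Omega$ is secured, the remaining steps are routine, the central ingredient being the surface lower semicontinuity provided by Theorem~\ref{thm: GSBD LSC}.
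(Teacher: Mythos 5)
Your proposal is correct and follows essentially the same approach as the paper's proof: direct method with coercivity from the growth of $W$ and $f \geq c$, compactness via \cite[Theorem 11.3]{DM} with the $\Psi$-term providing the tightness that excludes the escape-to-infinity set, convexity for the bulk term, Fatou for the potential term, and Theorem~\ref{thm: GSBD LSC} for the surface term. The only addition beyond the paper's argument is your explicit quantitative tightness estimate $\mathcal{L}^d(\{|u_k|>M\}) \le (\inf_{t\ge M}\Psi(t))^{-1}\sup_k \int_\Omega \Psi(|u_k|)\,\mathrm{d}x$, which is a harmless elaboration of what the paper leaves implicit.
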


\begin{proof}[Proof of Theorem \ref{thm: min}]
Let $(u_k)_k \subset GSBD^p(\Omega)$ be a minimizing sequence. Then, by the growth condition of $W$ and the fact that $f \ge c >0$, we obtain 
\begin{align}\label{eq: energy bound}
\sup_{k \in \N} \big( \Vert e(u_k) \Vert_{L^p(\Omega)} + \mathcal{H}^{d-1}(J_{u_k}) \big) < + \infty.
\end{align}
This along with the fact that $\sup_{k \in \N} \int_\Omega \Psi(|u_k|) {\rm d}x < + \infty$ allows us to  apply \cite[Theorem 11.3]{DM}:  we find $u\in GSBD^p(\Omega)$ such that $u_k \to u$ a.e.\ on $\Omega$ and $e(u_k) \rightharpoonup e(u)$ weakly in $L^p(\Omega;\R^{d\times d}_{\rm sym})$. By the convexity of $W$ and Fatou's lemma we obtain
$$ \int_\Omega W\big(e (u) \big) \, {\rm d}x \le \liminf_{k\to \infty}  \int_\Omega W\big(e (u_k) \big) \, {\rm d}x, \quad \quad \quad \int_\Omega \Psi(|u|) \, {\rm d}x \le \liminf_{k \to \infty} \int_\Omega \Psi(|u_k|) \, {\rm d}x.$$
By \eqref{eq: energy bound} and Theorem \ref{thm: GSBD LSC} we also get that the surface term is lower semicontinuous. We thus conclude that $u$ is a minimizer.  
\end{proof}
 
 The remainder of the section is devoted to the proof of Theorem \ref{thm: GSBD LSC}.    The proof   will rely on the following integration by parts formula.

\begin{lemma}[Integration by parts in $GSBD$]\label{lemma: BD-int-by-parts}
Let  $G \in C^1(\R^d;\R^d) \cap W^{1,\infty}(\R^d;\R^d)$  be conservative and let $u \in GSBD^p(\Omega)$, $1 < p < \infty$. Then, for all $A \in \mathcal{A}(\Omega)$ and $\varphi \in C^1_c(A)$  there holds
\begin{align}\label{eq: chaini}
\int_{A\cap J_u} \left\langle G(u^+)-G(u^-),\nu_u  \right\rangle \varphi\,{\rm d}\mathcal{H}^{d-1} + \int_A \big((\nabla G(u): e(u)  \big)\, \varphi\, {\rm d}x = - \int_A \left\langle G(u),\nabla \varphi  \right\rangle\,{\rm d}x.
\end{align}
\end{lemma}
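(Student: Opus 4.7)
The identity \eqref{eq: chaini} is the chain rule for the distributional divergence of $G\circ u$. The hypothesis that $G$ is conservative -- equivalently, that $\nabla G$ is pointwise symmetric -- is precisely what collapses the naive bulk term $\nabla G(u):\nabla u$ into $\nabla G(u):e(u)$, making the formula meaningful in the $GSBD^p$-setting, where only $e(u)$ is controlled.

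My first step would be to prove \eqref{eq: chaini} for ``rank-one'' conservative fields
$G(x)=h(\langle x,\xi\rangle)\,\xi$
with $\xi\in\mathbb{S}^{d-1}$ and $h\in C^1(\R)\cap W^{1,\infty}(\R)$, by a one-dimensional slicing argument. For such $G$ the integrand $G(u)\cdot\nabla\varphi=h(\langle u,\xi\rangle)\,\partial_\xi\varphi$ depends on $u$ only through the scalar quantity $\langle u,\xi\rangle$, and the $GSBD^p$-slicing property from \cite{DM} yields that, for $\mathcal{H}^{d-1}$-a.e.\ $y$ in the hyperplane $\Pi^\xi$ orthogonal to $\xi$, the slice $s\mapsto v_y(s):=\langle u(y+s\xi),\xi\rangle$ lies in $SBV^p_{\mathrm{loc}}$ with classical derivative $\langle e(u)(y+s\xi)\xi,\xi\rangle$ and jumps $\mathrm{sgn}(\langle\nu_u,\xi\rangle)\,\langle[u],\xi\rangle$ at the points where the line meets $J_u$ transversally (with the correct orientation convention for $[u]=u^+-u^-$ with respect to $\nu_u$). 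The one-dimensional $SBV$ chain rule applied to $h\circ v_y$, classical integration by parts on each line applied to $h(v_y)\,\psi_y$ with $\psi_y(s):=\varphi(y+s\xi)$, and reassembly via Fubini and the area formula
$$\int_{\Pi^\xi}\sum_{s\in J_{v_y}}(\cdot)\,d\mathcal{H}^{d-1}(y)=\int_{J_u}(\cdot)\,|\langle\nu_u,\xi\rangle|\,d\mathcal{H}^{d-1},$$
together with the identity $\nabla G(x)=h'(\langle x,\xi\rangle)\,\xi\otimes\xi$, will yield \eqref{eq: chaini} for this~$G$.

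Since \eqref{eq: chaini} is linear in $G$, the identity then extends to finite sums $G=\sum_k h_k(\langle\cdot,\xi_k\rangle)\,\xi_k$, i.e.\ to the ridge-type conservative fields of Example \ref{ex: prototyp}. To reach a general $G=\nabla\Phi\in C^1(\R^d;\R^d)\cap W^{1,\infty}(\R^d;\R^d)$, I would approximate the potential $\Phi$ by finite ridge sums $\Phi_n=\sum_k H_k^n(\langle\cdot,\xi_k^n\rangle)$, combining a smooth spatial cut-off with polynomial (Weierstrass) approximation and the polarization identity, which writes each monomial as a finite sum of ridge functions. The resulting $G_n:=\nabla\Phi_n$ are finite sums of rank-one conservative fields, converge to $G$ pointwise on $\R^d$, and can be arranged to satisfy $\|G_n\|_\infty+\|\nabla G_n\|_\infty\le C$ uniformly in~$n$. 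Applying \eqref{eq: chaini} to each $G_n$ and passing to the limit by dominated convergence, using $\nabla\varphi\in L^\infty$, $e(u)\in L^p(A;\R^{d\times d}_{\mathrm{sym}})$ and $\mathcal{H}^{d-1}(J_u\cap A)<+\infty$, then gives the identity for~$G$.

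The main technical obstacle is the ridge-function density step: arranging the approximations $\Phi_n\to\Phi$ so that $G_n$ and $\nabla G_n$ remain uniformly bounded on the \emph{entire} $\R^d$, which is needed because $u\in GSBD^p(\Omega)$ may be unbounded. This is a delicate but standard issue, resolved by a careful cut-off argument that exploits the $L^\infty$-bound on $G=\nabla\Phi$ (which in particular yields at-most-linear growth of $\Phi$ and thus absorbs the correction terms produced by the cut-off). Once this density is in place, the one-dimensional slicing of the first step together with a routine dominated-convergence passage completes the proof.
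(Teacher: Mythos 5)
Your approach is genuinely different from the paper's. You reduce to ``rank-one'' conservative fields $G(x)=h(\langle x,\xi\rangle)\xi$ by a one-dimensional slicing argument and then propose to approximate a general conservative $G$ by finite sums of such fields. The paper instead keeps $G$ arbitrary and approximates $u$ itself: it uses the Chambolle--Crismale density theorem (Theorem~\ref{thm: CCapprox}) to produce $(u_k)_k\subset SBV^p(\Omega;\R^d)\cap L^\infty(\Omega;\R^d)$ converging to $u$ in the appropriate sense, applies the known $SBV$ integration-by-parts formula from \cite[Lemma 3.5]{Amb} to each $u_k$, and passes to the limit term by term. Your rank-one slicing step is in itself correct: the $GSBD$ slicing structure, the one-dimensional $SBV$ chain rule for $h\circ v_y$ with $h$ bounded and Lipschitz, and the area formula do give \eqref{eq: chaini} for ridge fields, and the extension to finite ridge sums is immediate by linearity.

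The gap is in the density step, which you yourself flag as the main obstacle but call ``standard.'' It is not. You claim the ridge approximations $G_n=\nabla\Phi_n$, built from a spatial cut-off plus Weierstrass approximation plus polarization of monomials, ``can be arranged to satisfy $\|G_n\|_\infty+\|\nabla G_n\|_\infty\le C$ uniformly in $n$.'' With this mechanism the bound fails. A cut-off in $\R^d$ applied to the polynomial $\Phi_n$ destroys the ridge structure (the product of a $d$-dimensional bump with a polynomial is not a finite sum of functions of $\langle x,\xi_k\rangle$), while truncating each one-dimensional profile $H_k^n$ separately does preserve the ridge structure but destroys the cancellations on which polarization relies: in $x_1x_2=\frac{1}{4}\big[(x_1+x_2)^2-(x_1-x_2)^2\big]$ the two ridge terms are individually large and cancel, and as the degree of the approximating polynomial grows, the number of ridge terms and the sizes of the individual profiles blow up, so a term-by-term truncation changes the sum drastically and in any case has no uniform $W^{1,\infty}$ bound. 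Consequently the dominated-convergence passage at the end is not available, and the extension from ridge fields to a general conservative $G\in C^1(\R^d;\R^d)\cap W^{1,\infty}(\R^d;\R^d)$ does not go through as written. (One can obtain uniformly bounded ridge approximations by mollifying, cutting off, and expanding the resulting potential in a Fourier series on a torus, since each $e^{ik\cdot x}$ is a bounded ridge function with derivatives controlled by the coefficient decay; but this is a different idea from polarization, and still requires care. The paper's route of approximating $u$ rather than $G$ avoids the whole issue.)
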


In order to prove this formula,  we will combine the corresponding formula in $SBV$ (see \cite[Lemma~3.5]{Amb}) with  an approximation result for $GSBD^p$ functions  stated in  \cite[Theorem 1.1]{Crismale2}. A slightly simplified statement of  the latter  result is the following.

\begin{theorem}[Density in $GSBD^p$]\label{thm: CCapprox}
Let $u\in GSBD^p(\Omega)$, $p>1$. Then, there exists a sequence of functions $(u_k)_k\subset SBV^p(\Omega;\R^d)\cap L^\infty(\Omega;\R^d)$  such that each $J_{u_k}$ is closed in $\Omega$ and included in a finite union of closed connected pieces of $C^1$ hypersufaces, $u_k\in W^{1,\infty}(\Omega\setminus J_{u_k};\R^d)$, and 
\begin{align}\label{eq: main properties}
{\rm (i)} & \ \ u_k \to u \text{ a.e.\ on $\Omega$},\notag\\
{\rm (ii)} & \ \  \|e(u_k) - e(u) \|_{L^p(\Omega) }  \to 0,\notag\\
{\rm (iii)} & \ \ \mathcal{H}^{d-1}(J_{u_k}\triangle J_u)\to 0,\notag \\ 
{\rm (iv)} & \ \  \int_{J_{u_k} \cup J_u} \tau (|u_k^\pm- u^\pm|) \,{\rm d}\mathcal{H}^{d-1} \to 0, 
\end{align}
for some $\tau \in C^1(\R )$ with $-\frac{1}{2} \le \tau \le \frac{1}{2}$, $0 \le \tau' \le 1$,  and $\lbrace \tau=0 \rbrace = \lbrace 0\rbrace$.  
\end{theorem}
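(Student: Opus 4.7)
The statement is, as indicated, nothing but a specialisation of the density result of Chambolle and Crismale \cite[Theorem 1.1]{Crismale2}, so the plan is to point out how the precise form listed here is extracted from that reference. The core idea is to invoke their theorem to obtain a sequence $(u_k)_k$ with the stated structural and approximation properties, and then observe that property (iv) is just a convenient reformulation of their trace-convergence assertion.

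More concretely, I would first fix once and for all an auxiliary function $\tau \in C^1(\R)$ enjoying the required properties, for instance an odd primitive of a $C^0$ bump function of mass at most $1$, rescaled so that $|\tau| \le 1/2$, $0 \le \tau' \le 1$, and $\{\tau = 0\} = \{0\}$ (e.g.\ $\tau(t) = \tfrac{1}{\pi}\arctan(t)$ works). The theorem of Chambolle--Crismale then directly provides a sequence $(u_k)_k \subset SBV^p(\Omega;\R^d) \cap L^\infty(\Omega;\R^d)$ with $J_{u_k}$ closed in $\Omega$ and contained in a finite union of closed connected pieces of $C^1$ hypersurfaces, with $u_k \in W^{1,\infty}(\Omega \setminus J_{u_k};\R^d)$, and satisfying (i), (ii), and (iii) verbatim.

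It remains to establish (iv). The trace-convergence in \cite[Theorem 1.1]{Crismale2} is formulated by means of a truncation-type test function in exactly this spirit; more directly, one can split
\[
\int_{J_{u_k}\cup J_u} \tau(|u_k^\pm - u^\pm|)\,\mathrm{d}\mathcal{H}^{d-1} = \int_{J_{u_k}\cap J_u} \tau(|u_k^\pm - u^\pm|)\,\mathrm{d}\mathcal{H}^{d-1} + \int_{J_{u_k}\triangle J_u} \tau(|u_k^\pm - u^\pm|)\,\mathrm{d}\mathcal{H}^{d-1}.
\]
The second integral is bounded by $\tfrac12\mathcal{H}^{d-1}(J_{u_k}\triangle J_u)$, which tends to $0$ by (iii). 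For the first integral one uses the $\mathcal{H}^{d-1}$-a.e.\ pointwise convergence of the traces on the common part of the jump sets (which is built into the Chambolle--Crismale construction) together with the bound $|\tau|\le 1/2$, so that dominated convergence applies.

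The genuine difficulty, of course, lies not in verifying (iv) but in producing the sequence $(u_k)_k$ at all: approximating an arbitrary $GSBD^p$ function by $SBV^p \cap L^\infty$ functions with piecewise-$C^1$ jump sets and Sobolev (in fact Lipschitz) regularity in the complement is delicate, since the standard $SBV$-coarea arguments are unavailable and one must instead rely on local control on good balls, extension/reflection procedures, and a careful gluing via partitions of unity. This is precisely the content of \cite[Theorem 1.1]{Crismale2}, which we use here as an external black box; the present proof adds only the trivial repackaging of their trace convergence into the form (iv).
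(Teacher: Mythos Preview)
Your proposal is correct and aligns with the paper's own treatment: the paper does not prove this theorem at all but merely states it as ``a slightly simplified statement'' of \cite[Theorem 1.1]{Crismale2}, so your approach of invoking Chambolle--Crismale as a black box is exactly what the paper does. Your additional remarks on how (iv) is extracted from their trace-convergence statement are a helpful elaboration, but strictly more than the paper itself provides.
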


\begin{proof}[Proof of Lemma \ref{lemma: BD-int-by-parts}]
Given $u\in GSBD^p(\Omega)$ with $p>1$, we let $(u_k)_k\subset SBV^p(\Omega;\R^d)\cap L^\infty(\Omega;\R^d)$ be the approximation sequence provided by Theorem \ref{thm: CCapprox}. Then, by \cite[Lemma~3.5]{Amb}, for all $A \in \mathcal{A}(\Omega)$, for every $\varphi \in C^1_c(A)$, and for all $k\in\mathbb{N}$ we have that
\begin{align}\label{eq: chaini_h}
\int_{A\cap J_{u_k}} \left\langle G(u_k^+)-G(u_k^-),\nu_{u_k}  \right\rangle \varphi\,{\rm d} \mathcal{H}^{d-1} + \int_A \big( (\nabla G(u_k))^T  : \nabla u_k \big) \, \varphi\, {\rm d}x = - \int_A \left\langle G(u_k),\nabla \varphi  \right\rangle\,{\rm d}x.
\end{align}
Our goal is to pass to the limit $k\to \infty$ in each of the three terms separately.

\noindent \emph{Step 1.}  As $G \in C(\R^d;\R^d) \cap L^\infty(\R^d;\R^d)$ and $\nabla \varphi \in L^\infty(A)$, we get by   \eqref{eq: main properties}(i) and dominated convergence that 
\begin{align}\label{eq: integration by parts step1}
\lim_{k\to\infty}\int_{A}\left\langle G(u_k),\nabla \varphi  \right\rangle \,{\rm d}x = \int_{A}\left\langle G(u),\nabla \varphi  \right\rangle \,{\rm d}x. 
\end{align}  
\noindent \emph{Step 2.} We now show that 
\begin{align}\label{eq: integration by parts step2}
\lim_{k\to\infty}\int_A \big((\nabla G(u_k))^T  : \nabla u_k  \big) \, \varphi\, {\rm d}x = \int_A \big(\nabla G(u): e(u)  \big)\, \varphi\, {\rm d}x. 
\end{align}
In fact, we first note that $(\nabla G(u_k))^T  : \nabla u_k = \nabla G(u_k): e(u_k)$ a.e.\ due to the fact that $G$ is a conservative vector field and thus $\nabla G\colon \R^d \to \R^{d\times d}_{\rm sym}$. As $\nabla G \in C(\R^d;\R^{d\times d}) \cap L^\infty(\R^d;\R^{d\times d})$, we get $\nabla G(u_k) \to \nabla G(u)$ in $L^q(\Omega;\R^{d\times d})$ for any $q\in [1,\infty)$ by \eqref{eq: main properties}(i) and dominated convergence. Thus, by \eqref{eq: main properties}(ii), $\varphi \in C^1_c(A)$, and H\"older's inequality we obtain \eqref{eq: integration by parts step2}.

\noindent \emph{Step 3.} We  finally prove that, up to a subsequence, there holds
\begin{align}\label{eq: integration by parts step3}
\lim_{k\to\infty} \int_{A\cap J_{u_k}} \left\langle G(u_k^+)-G(u_k^-),\nu_{u_k}  \right\rangle \varphi\, {\rm d}\mathcal{H}^{d-1}= \int_{A\cap J_{u}} \left\langle G(u^+)-G(u^-),\nu_{u}  \right\rangle \varphi\, {\rm d}\mathcal{H}^{d-1}.
\end{align} 
As a preliminary step, we observe that, up to a subsequence, 
\begin{align}\label{eq: two prop}
&\lim_{k\to \infty}u^{\pm}_k  (x)  \rightarrow u^{\pm}  (x)  \quad \text{for $\mathcal{H}^{d-1}$-a.e. }x\in A. \notag\\
&\lim_{k\to \infty} \nu_{u_k}(x)=\nu_u(x) \quad \text{for $\mathcal{H}^{d-1}$-a.e. }x\in A.
\end{align}
In fact, by \eqref{eq: main properties}(iv) we get that  $u_k^{\pm}$ converges to $u^{\pm}$ in measure with respect to the measure $\mathcal{H}^{d-1}$, i.e., 
$$\lim_{k\to \infty} \mathcal{H}^{d-1} \left(\left\{x\in (J_{u_k}\cup J_u)\cap A\colon\, |u_k^{\pm}  (x)  -u^{\pm}  (x)  |> \eps  \right\}  \right)= 0$$
for all $\eps >0$. Then, up to passing to a subsequence,  we get that \eqref{eq: two prop}(i) holds true. We  further  observe that  \eqref{eq: two prop}(ii) follows directly from   \eqref{eq: main properties}(iii). Now, by \eqref{eq: two prop}, dominated convergence, and the fact that $G \in C(\R^d;\R^d) \cap L^\infty(\R^d;\R^d)$ as well as $\varphi \in C(A)\cap L^\infty(A)$ we obtain \eqref{eq: integration by parts step3}.


Finally, taking into account \eqref{eq: integration by parts step1}, \eqref{eq: integration by parts step2}, and \eqref{eq: integration by parts step3}, we can pass to the limit in \eqref{eq: chaini_h}. This concludes the proof.
\end{proof}

We are now in a position to prove Theorem \ref{thm: GSBD LSC}.

\begin{proof}[Proof of Theorem \ref{thm: GSBD LSC}]
The proof is in the spirit of that of  \cite[Theorem 3.6]{Amb}.  The essential step is to show that   for every every $A\in \mathcal{A}(\Omega)$ and for every conservative vector field $G \in C^1(\R^d;\R^d) \cap W^{1,\infty}(\R^d;\R^d)$ there holds 
\begin{align}\label{eq: GSBD LSC intermediate}
\int_{J_u\cap A} \left\langle G(u^+)-G(u^-), \nu_{u}  \right\rangle^+\, {\rm d}\mathcal{H}^{d-1} \leq \liminf_{k\to \infty} \int_{J_{u_k}\cap A} \left\langle G(u_k^+)-G(u_k^-), \nu_{u_k}  \right\rangle^+ \, {\rm d}\mathcal{H}^{d-1},
\end{align}
 where $t^+ := \max\lbrace t,0 \rbrace$ for $t \in \R$.  
For the moment, we assume that  \eqref{eq: GSBD LSC intermediate} holds and show the statement (Step 1). Afterwards, we will prove  \eqref{eq: GSBD LSC intermediate} (Step 2).

\noindent \emph{Step 1: Proof of the statement.} Fix $\eps >0$. By Remark \ref{rem: symm-conv} and the fact that $f$ is nonnegative  there exist conservative vector fields  $(g_h^\eps)_h \subset  C^1(\R^d;\R^d)\cap W^{1,\infty}(\R^d;\R^d)$  such that
\begin{align}\label{eq: eps approx}
f(i,j,\nu)-\varepsilon \le \sup_{h \in \N} \left\langle g^{\varepsilon}_h(i)-g^{\varepsilon}_h(j),\, \nu\right\rangle^+ \le f(i,j,\nu)+\varepsilon  \quad  \text{for all } (i,j,\nu) \in \R^d\times \R^d\times \mathbb{S}^{d-1}, i\neq j.
\end{align}
For each $h \in \N$ and all $A\in \mathcal{A}(\Omega)$, we get by \eqref{eq: GSBD LSC intermediate} and \eqref{eq: eps approx}
 \begin{align*}
\int\limits_{J_u\cap A} \left\langle g_h^\eps(u^+)-g_h^\eps(u^-), \nu_{u}  \right\rangle^+\, {\rm d}\mathcal{H}^{d-1} \le  \liminf_{k\to \infty} \int\limits_{J_{u_k}\cap A} f(u_k^+,u_k^-, \nu_{u_k})\, {\rm d}\mathcal{H}^{d-1} + \eps \sup\nolimits_k  \mathcal{H}^{d-1}(J_{u_k}\cap A).
\end{align*}
We set
\begin{align}\label{eq: Lambda-eps}
\Lambda_\eps(A) :=  \liminf_{k\to \infty} \int_{J_{u_k}\cap A} f(u_k^+,u_k^-, \nu_{u_k})\, {\rm d}\mathcal{H}^{d-1} + \eps \sup\nolimits_k  \mathcal{H}^{d-1}(J_{u_k}\cap A)
\end{align}
and apply Lemma \ref{lem: localization} to find 
 \begin{align*}
\int_{J_u\cap A} \sup_{h \in \N} \  \left\langle g_h^\eps(u^+)-g_h^\eps(u^-), \nu_{u}  \right\rangle^+\, {\rm d}\mathcal{H}^{d-1} \le \Lambda_\eps(A) 
\end{align*}
for all $A \in \mathcal{A}(\Omega)$. Then, by \eqref{eq: eps approx} we get 
 \begin{align*}
\int_{J_u}  f (u^+,u^-,\nu_{u})\,{\rm d}\mathcal{H}^{d-1}  \le \Lambda_\eps(\Omega)  + \eps \mathcal{H}^{d-1}(J_u). 
\end{align*}
We conclude the proof of \eqref{eq: GSBD LSC} by passing to $\eps \to 0$ and using \eqref{eq: e(u_k) unif bdd} as well as \eqref{eq: Lambda-eps}.

\noindent \emph{Step 2: Proof of \eqref{eq: GSBD LSC intermediate}.}  We now show \eqref{eq: GSBD LSC intermediate}. By condition (\ref{eq: e(u_k) unif bdd}) we get that the sequence $(|e(u_k)|)_k$ is equiintegrable and so, for every $\eps>0$, we can find an open set $B\subset \Omega$ such that
\begin{align}\label{daje}
\sup_{k \in \mathbb{N}}\int_{B}|e(u_k)|\,{\rm d}x  + \int_{B}|e(u)|\,{\rm d}x\le \eps, \quad \quad \text{and } \quad  \bigcup_{k\in\mathbb{N}}J_{u_k} \cup J_u \subset B.
\end{align}
Let  $\Phi:=\{\varphi\in C^1_c(B)\colon\, 0\leq \varphi\leq 1 \}$.  Then, by (\ref{daje}), by  dominated convergence,  and by applying Lemma~\ref{lemma: BD-int-by-parts} twice,  we get
\begin{align*}
\int_{J_u\cap A}\left\langle G(u^+)-G(u^-), \nu_u \right\rangle^+\,{\rm d}\mathcal{H}^{d-1}&= \sup_{\varphi\in \Phi}\left\{ \int_{J_u\cap A}\left\langle G(u^+)-G(u^-), \nu_{u}\right\rangle \, \varphi\,{\rm d}\mathcal{H}^{d-1}   \right\}
\\&\leq C\eps + \sup_{\varphi\in\Phi}\left\{ -  \int_{A}\left\langle G(u),\nabla \varphi  \right\rangle\, {\rm d}x   \right\}
\\&\leq C\eps + \liminf_{k\to \infty}\sup_{\varphi\in\Phi}\left\{ -  \int_{A}\left\langle G(u_k),\nabla \varphi  \right\rangle\, {\rm d}x   \right\}\\
&\leq 2C\eps + \liminf_{k\to \infty}\int_{J_{u_k}\cap A}\left\langle G(u_k^+)-G(u_k^-), \nu_{u_k}\right\rangle^+\,{\rm d}\mathcal{H}^{d-1},
\end{align*}
where the constant $C>0$ depends only on $G$. By the arbitrariness of $\eps>0$, the proof of  (\ref{eq: GSBD LSC intermediate}) is concluded. 
\end{proof}

\BBB While for general  symmetric biconvex functions (see Subsection \ref{sec: biconvex}) the lower semicontinuity in $GSBD^p$ remains an open problem, we point out that the proof strategy devised in Theorem~\ref{thm: GSBD LSC} allows to prove \eqref{eq: GSBD LSC} for symmetric biconvex functions, under the additional assumption that $\lbrace \theta =0 \rbrace = \lbrace 0 \rbrace$. In this case, the natural domain of the energy is $SBD^p$ due to Lemma \ref{lemma: technical}. The statement below is a slight generalization of that in \cite{BCD}, as no $L^\infty$-bound on the sequence has to be assumed.

\begin{theorem}\label{thm: biconv}
Consider a convex and positively $1$-homogeneous function $\theta\colon\R^{d\times d}_{\rm sym}\rightarrow [0,+\infty)$ with $\lbrace \theta =0 \rbrace = \lbrace 0 \rbrace$, and a sequence $(u_k)_k\subset GSBD^p(\Omega)$, $p>1$, converging in measure to $u$, and satisfying the condition
\begin{align}\label{eq: e(u_k) unif bdd}
\sup_{k\in\mathbb{N}} \big( \| e(u_k) \|_{L^p(\Omega)}  + \mathcal{H}^{d-1}(J_{u_k}) \big) <+\infty\,.
\end{align}
Then
\begin{align}
\int_{J_u} \theta([u]\odot\nu_u)\, {\rm d}\mathcal{H}^{d-1} \leq \liminf_{k\to \infty} \int_{J_{u_k}} \theta([u_k]\odot\nu_{u_k})\, {\rm d}\mathcal{H}^{d-1}.
\end{align}
\end{theorem}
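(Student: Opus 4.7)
The plan is to adapt the strategy of Theorem \ref{thm: GSBD LSC} by replacing the bounded conservative fields of Definition \ref{def: symm-conv} with the linear maps $G_Z(x):=Zx$, where $Z$ ranges over the polar set
\[
W_\theta^{{\rm sym}}:=\{Z\in\R^{d\times d}_{{\rm sym}}\colon F\!:\!Z\le \theta(F)\ \forall F\in\R^{d\times d}_{{\rm sym}}\}.
\]
Each $G_Z$ is conservative with (constant, symmetric) differential $Z$, so the integration-by-parts identity used there is still formally available; the only obstacle is that $G_Z$ is unbounded, which rules out the dominated-convergence step available when $G\in W^{1,\infty}$. Since $\theta$ is convex, positively $1$-homogeneous, and $\{\theta=0\}=\{0\}$, it satisfies $\theta(F)\ge c|F|$, so the energy bound yields $\sup_k\int_{J_{u_k}}|[u_k]|\,{\rm d}\mathcal{H}^{d-1}<\infty$. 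Lemma \ref{lemma: technical} then upgrades every $u_k$ (and, by Theorem \ref{thm: GSBD LSC} applied to the symmetric jointly convex density $(i,j,\nu)\mapsto |i-j||\nu|$ from Theorem \ref{example: g subadditive}, also $u$) from $GSBD^p(\Omega)$ to $SBD^p(\Omega)$.

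The next step is to produce $L^1_{{\rm loc}}$ convergence of a subsequence of $(u_k)$, which will play the role of dominated convergence in Theorem \ref{thm: GSBD LSC}. The Korn--Poincaré--Sobolev inequality for $BD$ on a bounded Lipschitz domain supplies rigid motions $r_k$ with $\|u_k-r_k\|_{L^{d/(d-1)}(\Omega)}$ controlled uniformly by $\|Eu_k\|_{\mathcal M}$, the latter being finite by the previous step and the $L^p$ bound on $e(u_k)$. Convergence in measure of $u_k$ to a finite-valued $u$ forces the $r_k$ to stay bounded (otherwise $|u_k|\to\infty$ on a set of positive measure), and the compact embedding $BD\hookrightarrow L^1$ then yields $u_k\to u$ strongly in $L^1(\Omega;\R^d)$ along a subsequence; since the inequality to be proved is only sensitive to subsequential $\liminf$'s, this suffices.

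The core of the argument is the localized estimate
\[
\int_{J_u\cap A}\langle Z[u],\nu_u\rangle^+\,{\rm d}\mathcal{H}^{d-1}\leq \liminf_{k\to\infty}\int_{J_{u_k}\cap A}\theta([u_k]\odot\nu_{u_k})\,{\rm d}\mathcal{H}^{d-1}
\]
for every $Z\in W_\theta^{{\rm sym}}$ and every $A\in\mathcal{A}(\Omega)$. I would mirror Step 2 of the proof of Theorem \ref{thm: GSBD LSC}: by equiintegrability of $(|e(u_k)|)_k$, pick an open $B\supset J_u\cup\bigcup_k J_{u_k}$ with $\int_B|e(u_k)|\,{\rm d}x+\int_B|e(u)|\,{\rm d}x<\varepsilon$, and for $\varphi\in\Phi:=\{\varphi\in C^1_c(A\cap B)\colon 0\le\varphi\le 1\}$ use the distributional identity
\[
\int_{J_w\cap A}\langle Z[w],\nu_w\rangle\varphi\,{\rm d}\mathcal{H}^{d-1}+\int_A Z\!:\!e(w)\,\varphi\,{\rm d}x=-\int_A\langle Zw,\nabla\varphi\rangle\,{\rm d}x,\qquad w\in\{u,u_k\},
\]
which, for constant symmetric $Z$, is merely the definition of the distributional symmetric gradient of the $SBD$ function $w$. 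The strong $L^1$ convergence from the previous step passes $-\int\langle Zu_k,\nabla\varphi\rangle\,{\rm d}x$ to $-\int\langle Zu,\nabla\varphi\rangle\,{\rm d}x$; combined with $\langle Z[u_k],\nu_{u_k}\rangle\le \theta([u_k]\odot\nu_{u_k})$ (the defining inequality of $W_\theta^{{\rm sym}}$) and the smallness of the bulk contribution on $B$, the desired estimate follows upon taking the supremum over $\varphi\in\Phi$ and letting $\varepsilon\to 0$.

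The proof is closed by Lemma \ref{lem: localization}: for a countable dense family $\mathcal{Z}\subset W_\theta^{{\rm sym}}$ one has $\theta(F)=\sup_{Z\in\mathcal{Z}}F\!:\!Z$ by the convex-analysis argument of Proposition \ref{prop: bijoint}, so the superadditive set function $A\mapsto\liminf_k\int_{J_{u_k}\cap A}\theta([u_k]\odot\nu_{u_k})\,{\rm d}\mathcal{H}^{d-1}$ is bounded below by $\int_{J_u\cap A}\langle Z[u],\nu_u\rangle^+\,{\rm d}\mathcal{H}^{d-1}$ for every $Z\in\mathcal{Z}$, and Lemma \ref{lem: localization} produces $\int_{J_u\cap A}\theta([u]\odot\nu_u)\,{\rm d}\mathcal{H}^{d-1}$ as the lower bound. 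Evaluating at $A=\Omega$ yields the claim. The main technical obstacle is the $L^1_{{\rm loc}}$ compactness of $(u_k)$, which replaces the dominated-convergence argument of Theorem \ref{thm: GSBD LSC} that was only available under the boundedness of $G$; once this is in place, the rest is a faithful translation of that proof to the unbounded linear conservative fields $G_Z$.
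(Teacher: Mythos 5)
Your proof is correct and follows the same overall strategy as the paper's (adapt the argument of Theorem~\ref{thm: GSBD LSC} to the linear conservative fields $G_Z(x)=Zx$ with $Z\in\R^{d\times d}_{\rm sym}$ coming from the subdifferential representation of $\theta$). The paper's proof is very terse---it only says that $(u_k)$ is bounded in $SBD(\Omega)$, that the integration-by-parts formula~\eqref{eq: chaini} holds for $G=Zu$ ``thanks to the approximation result in~\cite[Theorem~1.1]{Vito}'', and that ``the result follows with the same argument as in the proof of Theorem~\ref{thm: GSBD LSC}''. You make two observations that genuinely sharpen this. First, you correctly note that for the linear field $G(u)=Zu$ with $Z$ constant symmetric, the integration-by-parts identity is \emph{not} a deep approximation fact but simply the definition of the distributional symmetrized gradient of an $SBD$ function tested against $\varphi Z$; the reference to \cite{Vito} is thus not needed at this point. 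Second, and more importantly, you explicitly identify the step that \emph{does} break in the translation of Theorem~\ref{thm: GSBD LSC}: the dominated-convergence argument for $-\int\langle G(u_k),\nabla\varphi\rangle\,\mathrm{d}x\to-\int\langle G(u),\nabla\varphi\rangle\,\mathrm{d}x$ fails when $G$ is unbounded. Your fix---upgrading convergence in measure to strong $L^1$ convergence (along a subsequence) via the Korn--Poincar\'e--Sobolev inequality in $BD$, the resulting rigid motions being controlled by convergence in measure to the finite-valued $u$, and compact embedding---is correct and fills a step the published proof leaves implicit. Two small notes: the passage from $\liminf$ to subsequential $\liminf$ is legitimate as you indicate, and your derivation of $u\in SBD^p$ via Theorem~\ref{thm: GSBD LSC} with the density $|i-j||\nu|$ is a clean alternative to the paper's route through the argument of Proposition~\ref{prop: biconvex} and Lemma~\ref{lemma: technical} (ultimately they rest on the same ingredients). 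The proof is sound.
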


\begin{proof} Arguing as in the proof of Proposition \ref{prop: biconvex}, if the right-hand side is finite, the sequence $(u_k)_k$ is bounded in $SBD(\Omega)$, so that $(u_k)_k\in SBD^p(\Omega)$ and  $u\in SBD^p(\Omega)$. In this case, the integration by parts formula \eqref{eq: chaini} still holds for $G(u)=Zu$ with $Z\in \mathbb{R}^{d\times d}_{\rm sym}$, cf.\ \eqref{eq:  bijoint}, thanks to the approximation result in \cite[Theorem 1.1]{Vito}.  With this, the result follows with the same argument as in the proof of Theorem \ref{thm: GSBD LSC}.
\end{proof}

 \EEE

\section*{Acknowledgements} 
This work was supported by the DFG project FR 4083/1-1 and  by the Deutsche Forschungsgemeinschaft (DFG, German Research Foundation) under Germany's Excellence Strategy EXC 2044 -390685587, Mathematics M\"unster: Dynamics--Geometry--Structure. 
The work of Francesco Solombrino is part of the project “Variational methods for stationary and evolution problems with singularities and interfaces” PRIN 2017 financed by the Italian Ministry of Education, University, and Research.

\typeout{References}

\end{document}